\documentclass[reqno]{amsart}
\usepackage{hyperref}
\usepackage{graphicx}
\usepackage[latin1]{inputenc}
\usepackage[english,activeacute]{babel}
\usepackage{esvect}
\numberwithin{equation}{section}
\newtheorem{theorem}{Theorem}[section]
\newtheorem{lemma}[theorem]{Lemma}
\newtheorem{remark}[theorem]{Remark}

\newtheorem{proposition}[theorem]{Proposition}
\newtheorem{corollary}[theorem]{Corollary}

\title[Orbital Stability of standing waves]
{Orbital stability of standing waves for a system of nonlinear
 Schr\"{o}dinger equations with three wave interaction}

\author[Alex H. Ardila]{}
\email{ardila@impa.br}


\subjclass[2010]{35Q55; 35A15; 35B35}
\keywords{Nonlinear Schr\"{o}dinger system; solitary wave; orbital stability}

\begin{document}
\maketitle

\centerline{\scshape Alex H. Ardila}
\medskip
{\footnotesize
 \centerline{Instituto Nacional de Matem\'atica Pura e Aplicada - IMPA,}
\centerline{Estrada Dona Castorina 110, CEP 22460-320, Rio de Janeiro, RJ, Brazil.}
} 

\begin{abstract}
We study the existence and stability of standing waves solutions of a three-coupled nonlinear Schr\"{o}dinger system related to the Raman amplification in a plasma. By means of the concentration-compacteness method, we provide a  characterization of the standing waves solutions as minimizers of an energy functional subject to three independent $L^{2}$ mass constraints. As a consequence, we establish existence and  orbital stability of solitary waves. 
\end{abstract}

\section{Introduction}
\label{S:0}
In this paper, our purpose is to investigate the orbital stability of standing waves solutions for the following system of nonlinear Schr\"{o}dinger equations with three-wave interaction
\begin{equation}\label{NLS}
\begin{cases} 
 i\partial_{t}u_{1}+\partial^{2}_{x}u_{1}+\beta|u_{1}|^{p-1}u_{1}=-\alpha u_{3}\overline{u_{2}},  \\
 i\partial_{t}u_{2}+\partial^{2}_{x}u_{2}+\beta|u_{2}|^{p-1}u_{2}=-\alpha u_{3}\overline{u_{1}},\\   
 i\partial_{t}u_{3}+\partial^{2}_{x}u_{3}+\beta|u_{3}|^{p-1}u_{3}=-\alpha u_{1}{u_{2}}, 
\end{cases} 
\end{equation} 
for $(x,t)\in \mathbb{R}^{2}$, where $p>1$, $\beta>0$, $\alpha>0$, and $u_{1}$, $u_{2}$ and $u_{3}$  are complex valued functions of 
$(x,t)\in \mathbb{R}^{2}$. The system \eqref{NLS} was introduced by M. Colin, T. Colin and M. Ohta in \cite{CCaH} as a simplified model of a quasilinear Zakharov system related to the Raman amplification in a plasma analyzed in \cite{RP1, RP2}.

In what follows, we use the vectorial notation $\vv{u}=(u_{1}, u_{2}, u_{3})$. Formally, the system \eqref{NLS} has  the following three conserved quantities. The first conserved quantity  is the energy $E$ defined by
\begin{equation}\label{En}
E(\vv{u})=\sum^{3}_{i=1}\left\{\frac{1}{2}\int_{\mathbb{R}}|\partial_{x}u_{i}|^{2}\,dx-\frac{\beta}{p+1}\int_{\mathbb{R}}|u_{i}|^{p+1}\,dx\right\}-\alpha\,\Re\int_{\mathbb{R}}u_{1}u_{2}\overline{u_{3}}\, dx,
\end{equation}
where $\Re(z)$ denotes the real part of a complex number $z$. Other two conserved quantities  are
\begin{equation}\label{Mas}
Q_{1}(\vv{u})=\|u_{1}\|^{2}_{L^{2}}+\|u_{3}\|^{2}_{L^{2}}\quad \text{and}\quad Q_{2}(\vv{u})=\|u_{2}\|^{2}_{L^{2}}+\|u_{3}\|^{2}_{L^{2}}.
\end{equation}

We see that the well-posedness of the Cauchy Problem for \eqref{NLS} in $H^{1}(\mathbb{R}; \mathbb{C}^{3})$ and the conservation laws follow from the standard regularizing argument; see Chapter 4 in \cite{CB} for more details. 

\begin{proposition} \label{WP}
Let $1<p< +\infty$ and $\alpha>0$. For every  $\vv{u}_{0}\in H^{1}(\mathbb{R}; \mathbb{C}^{3})$, there is $T_{{\rm max}}=T_{{\rm max}}(\vv{u}_{0})>0$ and a unique solution $\vv{u}\in C([0,T_{{\rm max}}),H^{1}(\mathbb{R}; \mathbb{C}^{3}))$  of \eqref{NLS} with $\vv{u}(0)=\vv{u}_{0}$ such that either $T_{{\rm max}}=+\infty$
or $T_{{\rm max}}<+\infty$ and $\lim_{t\rightarrow T_{{\rm max}}}\|\partial_{x}\vv{u}(t)\|^{2}_{L^{2}}=+\infty$. 
Furthermore, the solution $\vv{u}$ satisfies the conservation laws: for all $t\in [0, T^{\rm max})$,
\begin{equation*}
E(\vv{u}(t))=E(\vv{u}_{0}), \,\,\,\,   Q_{1}(\vv{u}(t))=Q_{1}(\vv{u}_{0})\,\,\,\,   \text{and}\,\,\,\, Q_{2}(\vv{u}(t))=Q_{2}(\vv{u}_{0}).
\end{equation*}
\end{proposition}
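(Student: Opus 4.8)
The plan is to reformulate \eqref{NLS} as a fixed-point problem for the associated Duhamel integral equation, run a contraction argument in $C([0,T],H^{1}(\mathbb{R};\mathbb{C}^{3}))$ for $T$ small, upgrade to a maximal existence interval, and finally establish the conservation laws by a regularization procedure, exactly in the spirit of Chapter 4 of \cite{CB}. Writing $e^{it\partial_{x}^{2}}$ for the free Schr\"odinger group and collecting the right-hand sides into the nonlinearity
\[
\vv{g}(\vv{u})=\bigl(\beta|u_{1}|^{p-1}u_{1}+\alpha u_{3}\overline{u_{2}},\ \beta|u_{2}|^{p-1}u_{2}+\alpha u_{3}\overline{u_{1}},\ \beta|u_{3}|^{p-1}u_{3}+\alpha u_{1}u_{2}\bigr),
\]
the system \eqref{NLS} is equivalent, for $H^{1}$ solutions, to the integral equation
\[
\vv{u}(t)=e^{it\partial_{x}^{2}}\vv{u}_{0}+i\int_{0}^{t}e^{i(t-s)\partial_{x}^{2}}\vv{g}(\vv{u}(s))\,ds.
\]

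First I would record the nonlinear estimate that makes everything work in one space dimension. Since $H^{1}(\mathbb{R})\hookrightarrow L^{\infty}(\mathbb{R})$ and $H^{1}(\mathbb{R})$ is a Banach algebra, both the power terms $|u_{i}|^{p-1}u_{i}$ and the quadratic coupling terms are locally Lipschitz as maps $H^{1}\to H^{1}$: on any ball $\{\|\vv{u}\|_{H^{1}}\le M\}$ one has $\|\vv{g}(\vv{u})-\vv{g}(\vv{v})\|_{H^{1}}\le C(M)\|\vv{u}-\vv{v}\|_{H^{1}}$, where $C(M)$ enters through $\|u_{i}\|_{L^{\infty}}$ and $\|v_{i}\|_{L^{\infty}}$. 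The power part rests on the pointwise bound $\big||u|^{p-1}u-|v|^{p-1}v\big|\le C(|u|^{p-1}+|v|^{p-1})|u-v|$ together with the corresponding derivative estimate, while the quadratic part follows directly from the algebra property. I would then apply the Banach fixed-point theorem on a closed ball of $C([0,T],H^{1})$: since $e^{it\partial_{x}^{2}}$ is unitary on $H^{1}$ and the time integration gains a factor $T$, choosing $T=T(\|\vv{u}_{0}\|_{H^{1}})$ small makes the map a contraction, yielding a unique local solution with continuous dependence on the data. Standard iteration produces the maximal time $T_{\max}$ and the blow-up alternative in the form $\lim_{t\to T_{\max}}\|\vv{u}(t)\|_{H^{1}}=+\infty$ when $T_{\max}<+\infty$.

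The conservation laws are the delicate point, and I would treat them by regularization. For data $\vv{u}_{0}\in H^{2}$ the solution remains in $H^{2}$ on its existence interval, so $E,Q_{1},Q_{2}$ are differentiable in $t$ and a direct computation using the equation gives $\tfrac{d}{dt}Q_{1}=\tfrac{d}{dt}Q_{2}=0$ and $\tfrac{d}{dt}E=0$; for the masses the cross terms produced by the coupling cancel precisely because of the combinations $\|u_{1}\|_{L^{2}}^{2}+\|u_{3}\|_{L^{2}}^{2}$ and $\|u_{2}\|_{L^{2}}^{2}+\|u_{3}\|_{L^{2}}^{2}$, the two contributions being complex conjugates with opposite imaginary parts. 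For general $\vv{u}_{0}\in H^{1}$ I would approximate by $\vv{u}_{0}^{(n)}\in H^{2}$, invoke conservation along each regular flow, and pass to the limit using continuous dependence in $H^{1}$ together with the continuity of $E,Q_{1},Q_{2}$ on $H^{1}$.

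Finally, since conservation of $Q_{1}$ and $Q_{2}$ bounds $\|\vv{u}(t)\|_{L^{2}}$ uniformly in $t$, the blow-up alternative refines from $\|\vv{u}(t)\|_{H^{1}}\to+\infty$ to $\lim_{t\to T_{\max}}\|\partial_{x}\vv{u}(t)\|_{L^{2}}^{2}=+\infty$, which is the stated form. I expect the main obstacle to be the rigorous time-differentiation of $E$ at the $H^{1}$ level—this is exactly what the $H^{2}$-approximation circumvents—together with the non-smoothness of $|u|^{p-1}u$ when $p$ is not an odd integer, which must be handled carefully in the Lipschitz estimate above.
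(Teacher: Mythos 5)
Your overall architecture --- Duhamel formulation, contraction argument, $H^{2}$-regularization for the conservation laws, refinement of the blow-up alternative via conservation of $Q_{1}$, $Q_{2}$ --- is exactly the ``standard'' scheme that the paper itself invokes without proof, by citing Chapter 4 of \cite{CB}. However, your proposal has a genuine gap in the key nonlinear estimate. The claim that $\vv{g}$ is locally Lipschitz as a map $H^{1}\rightarrow H^{1}$ is \emph{false} for $1<p<2$, while the proposition covers all $1<p<+\infty$. The quadratic coupling terms are indeed fine by the algebra property of $H^{1}(\mathbb{R})$, but for the power term the derivative part of the $H^{1}$-difference contains $\left(|u|^{p-1}-|v|^{p-1}\right)\partial_{x}u$, and $z\mapsto |z|^{p-1}$ is only H\"older continuous of order $p-1<1$ when $p<2$; one gets bounds of the form $C(M)\|u-v\|_{H^{1}}^{p-1}$, not $C(M)\|u-v\|_{H^{1}}$. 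Consequently the contraction in $C([0,T],H^{1})$ as you set it up does not close on this range --- this is precisely the obstacle you flag in your last sentence, but flagging it does not remove it, and no Lipschitz estimate of the asserted form exists there.

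The standard repair (and what the cited regularizing argument of \cite{CB} actually implements, in the form of Kato's method) is a two-norm fixed point: work on the set $\left\{\vv{u}\in L^{\infty}((0,T),H^{1}(\mathbb{R};\mathbb{C}^{3})):\|\vv{u}\|_{L^{\infty}H^{1}}\leq M\right\}$ equipped with the \emph{weaker} metric $d(\vv{u},\vv{v})=\|\vv{u}-\vv{v}\|_{L^{\infty}((0,T),L^{2})}$, which is complete by weak-$*$ lower semicontinuity of the $H^{1}$ norm. This requires only two ingredients valid for every $p>1$: the bound $\|\vv{g}(\vv{u})\|_{H^{1}}\leq C(M)$ on $H^{1}$-balls (which involves no differences, only $\|u_{i}\|_{L^{\infty}}^{p-1}\|\partial_{x}u_{i}\|_{L^{2}}$), and Lipschitz continuity of $\vv{g}$ from bounded sets of $H^{1}$ into $L^{2}$, which follows from your pointwise inequality $\bigl||u|^{p-1}u-|v|^{p-1}v\bigr|\leq C(|u|^{p-1}+|v|^{p-1})|u-v|$ together with $H^{1}(\mathbb{R})\hookrightarrow L^{\infty}(\mathbb{R})$. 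Uniqueness in $C([0,T],H^{1})$ then follows from a Gronwall inequality at the $L^{2}$ level, and strong $H^{1}$-continuity in time is recovered a posteriori alongside the regularization argument for the conservation laws. With this modification the rest of your outline --- the cancellation of the coupling terms in $Q_{1}$, $Q_{2}$, the $H^{2}$-approximation for $E$, and the upgrade of the blow-up alternative to $\lim_{t\rightarrow T_{\rm max}}\|\partial_{x}\vv{u}(t)\|_{L^{2}}^{2}=+\infty$ --- goes through as written.
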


Notice that if $1<p<5$, then the Cauchy problem of \eqref{NLS} is globally well posed in $H^{1}(\mathbb{R}; \mathbb{C}^{3})$. Indeed, assume that $T_{{\rm max}}<+\infty$ and therefore $\lim_{t\rightarrow T_{{\rm max}}}\|\partial_{x}\vv{u}(t)\|^{2}_{L^{2}}=+\infty$.
From the Gagliardo-Nirenberg inequality, H\"older inequality and the conservation laws, there exists a constant $C>0$ such that for all $\vv{u}_{}\in H^{1}(\mathbb{R}; \mathbb{C}^{3})$ (see \eqref{E1} below for more detail)
\begin{align}\label{GC1} 
\|u_{i}\|^{p+1}_{L^{p+1}}&\leq C\|\partial_{x}u_{i}\|^{\frac{(p-1)}{2}}_{L^{2}}\|u_{i}\|^{p+1-\frac{(p-1)}{2}}_{L^{2}}  \quad \text{for $i=1$, $2$, $3$,}\\   \label{GC2}
\Re\int_{\mathbb{R}}u_{1}(t)u_{2}(t)\overline{u_{3}(t)}\,& dx\leq C\sum^{3}_{i=1} \|\partial_{x}u_{i}(t)\|^{\frac{1}{2}}_{L^{2}}\quad \text{for all $t\in [0, T_{{\rm max}})$}.
\end{align}
Combining \eqref{GC1} and \eqref{GC2} leads to
\begin{equation}\label{CBC}
E(\vv{u}_{0})=E(\vv{u}(t))\geq \sum^{3}_{i=1} \|\partial_{x}u_{i}(t)\|^{2}_{L^{2}}\left(1-C\left(\|\partial_{x}u_{i}(t)\|^{\frac{(p-1)}{2}-2}_{L^{2}}+\|\partial_{x}u_{i}(t)\|^{-\frac{3}{2}}_{L^{2}}\right)\right),
\end{equation}
since $p<5$, we see that $\frac{(p-1)}{2}-2<0$, and thus letting $\|\partial_{x}\vv{u}(t)\|^{2}_{L^{2}}\rightarrow +\infty$
when $t\rightarrow T_{{\rm max}}$ leads to a contradiction in \eqref{CBC}.

For every $\theta_{1}$, $\theta_{2}\in \mathbb{R}$ and $y\in \mathbb{R}^{N}$ we define $R(\theta_{1}, \theta_{2})$ and $\tau_{y}$ by 
\begin{equation*}
R(\theta_{1}, \theta_{2})\vv{u}=(e^{i\theta_{1}}u_{1}, e^{i\theta_{2}}u_{2}, e^{i(\theta_{1}+\theta_{2})}u_{3}), \quad \tau_{y}\vv{u}(x)=\vv{u}(x-y),
\end{equation*}
for all $\vv{u}\in H^{1}(\mathbb{R}; \mathbb{C}^{3})$. Note that \eqref{NLS} can be written as
\begin{equation*}
\partial_{t}\vv{u}(t)=-iE^{\prime}(\vv{u}(t))
\end{equation*}
and that $E(R(\theta_{1}, \theta_{2})\tau_{y}\vv{u})=E(\vv{u})$ for all $\theta_{1}$, $\theta_{2}\in \mathbb{R}$, $y\in \mathbb{R}^{N}$ and 
$\vv{u}\in H^{1}(\mathbb{R}; \mathbb{C}^{3})$. For $\omega_{1}$, $\omega_{2}\in \mathbb{R}$ we define the action $K_{\omega_{1}, \omega_{1}}:  H^{1}(\mathbb{R}; \mathbb{C}^{3})\rightarrow \mathbb{R}$ by 
\begin{equation*}
K_{\omega_{1}, \omega_{1}}(\vv\phi)=E(\vv\phi)+\omega_{1}Q_{1}(\vv\phi)+\omega_{2}Q_{2}(\vv\phi).
\end{equation*}
We remark that the Euler-Lagrange equation  $K^{\prime}_{\omega_{1}, \omega_{1}}(\vv\phi)=0$ is written as
\begin{equation}\label{SP}
\begin{cases} 
-\partial^{2}_{x}\phi_{1}+2\omega_{1}\phi_{1}-\beta|\phi_{1}|^{p-1}\phi_{1}=\alpha \phi_{3}\overline{\phi_{2}},  \\
-\partial^{2}_{x}\phi_{2}+2\omega_{2}\phi_{2}-\beta|\phi_{2}|^{p-1}\phi_{2}=\alpha\phi_{3}\overline{\phi_{1}},\\   
-\partial^{2}_{x}\phi_{3}+2\omega_{3}\phi_{3}-\beta|\phi_{3}|^{p-1}\phi_{3}=\alpha \phi_{1}\phi_{2}, 
\end{cases} 
\end{equation}
where $\omega_{3}=\omega_{1}+\omega_{2}$, and that if $K^{\prime}_{\omega_{1}, \omega_{1}}(\vv\phi)=0$, then $\vv{u}(x,t)=R(2\omega_{1}t, 2\omega_{2}t)\vv{\phi}(x)$ is a solution of \eqref{NLS}. From the physical as well as the mathematical point of view, a central issue is to study the existence and stability of standing  waves  of system  \eqref{NLS}. A standing  waves  solution of \eqref{NLS} is a solution of the form 
$\vv{u}(x,t)=R(2\omega_{1}t, 2\omega_{2}t)\vv{\phi}(x)=(e^{2i\omega_{1}t}\phi_{1}(x), e^{2i\omega_{2}t}\phi_{2}(x), e^{2i(\omega_{1}+\omega_{2})t}\phi_{3}(x))$, where $(\omega_{1}, \omega_{2})\in \mathbb{R}^{2}$ and  $(\phi_{1}, \phi_{2},\phi_{3})$ are complex valued functions which have to satisfy the  system of ordinary differential equations \eqref{SP}. 

Previously, Pomponio \cite{Apo} had proved the existence of vector solutions of \eqref{SP} as minimizers of the action $K_{\omega_{1}, \omega_{1}}$ on the Nehari manifold. More specifically, it was shown in \cite{Apo} that vector solutions of \eqref{SP} exist whenever value of the coupling parameter $\alpha$ is sufficiently large. In this paper our approach is different and is based on  the concentration compactness method of P.L. Lions \cite{PLL1}. Given any $\gamma>0$, $\mu>0$ and $s>0$ we look for solutions $(\phi, \varphi, \psi)\in H^{1}(\mathbb{R}; \mathbb{C}^{3})$ of \eqref{SP} satisfying the  condition $\|\phi\|^{2}_{L^{2}}=\gamma$, $\|\varphi\|^{2}_{L^{2}}=\mu$ and $\|\psi\|^{2}_{L^{2}}=s$.  Such solutions are of interest in physics and sometimes referred to as normalized solutions. With this in mind, we consider the following variational problem
\begin{equation}\label{MP}
I(\gamma, \mu,s):=\inf\left\{E(\vv{u}): \vv{u}\in H^{1}(\mathbb{R}; \mathbb{C}^{3}),\,\,
 \|u_{1}\|^{2}_{L^{2}}=\gamma,\,\,\, \|u_{2}\|^{2}_{L^{2}}=\mu,\,\,\, \|u_{3}\|^{2}_{L^{2}}=s\right\}.
\end{equation}
We denote the set of nontrivial minimizers for $I(\gamma, \mu,s)$ by 
\begin{align*}
\mathcal{G}_{\gamma, \mu,s}&=\left\{\vv{u}\in H^{1}(\mathbb{R}; \mathbb{C}^{3}):E(\vv{u})=I(\gamma, \mu,s) \,\, \text{such that}\right.\\
&\left.\|u_{1}\|^{2}_{L^{2}}=\gamma,\,\,\,\, \|u_{2}\|^{2}_{L^{2}}=\mu\,\, \text{and}\,\,\|u_{3}\|^{2}_{L^{2}}=s\right\}.
\end{align*}

Before establishing our first result of existence, we define a minimizing sequence for $I(\gamma, \mu, s)$ to be a sequence $\left\{\vv{u}_{n}\right\}$ in $H^{1}(\mathbb{R}; \mathbb{C}^{3})$ such that  $\|u_{1, n}\|^{2}_{L^{2}}\rightarrow \gamma$,  $\|u_{2, n}\|^{2}_{L^{2}}\rightarrow \mu$, $\|u_{3, n}\|^{2}_{L^{2}}\rightarrow s$ and $E(\vv{u}_{n})\rightarrow I(\gamma, \mu, s)$ as $n$ goes to $+\infty$; this convention will be useful later, in the proof of the Theorem \ref{NTE} below.

\begin{theorem} \label{SPI}
Suppose $\gamma>0$, $\mu>0$, $s>0$ and $1<p<5$. Then the following properties hold.\\
{\rm (i)} The set $\mathcal{G}_{\gamma, \mu,s}$ is not empty. Furthermore, any minimizing  sequence $\left\{\vv{u}_{n}\right\}$ of  $I(\gamma, \mu,s)$ is relatively compact in $H^{1}(\mathbb{R}; \mathbb{C}^{3})$ up to translations. That is, there exist $\left\{y_{n}\right\}\subset \mathbb{R}$ and an element $\vv{\varphi}$  such that  $\left\{\vv{u}_{n}(\cdot+y_{n})\right\}$ has a subsequence converging strongly in $H^{1}(\mathbb{R}; \mathbb{C}^{3})$ to $\vv{\varphi}$. \\
{\rm (ii)}
\begin{equation}\label{ABCD}
\lim_{n\rightarrow\infty}\inf_{\vv{g}\in\mathcal{G}_{\gamma, \mu,s}, y\in\mathbb{R}}\|\vv{u}_{n}(\cdot+y)-\vv{g}\|_{H^{1}(\mathbb{R}; \mathbb{C}^{3})}=0. 
\end{equation}
{\rm (iii)}
\begin{equation}\label{AB234}
\lim_{n\rightarrow\infty}\inf_{\vv{g}\in\mathcal{G}_{\gamma, \mu,s}}\|\vv{u}_{n}-\vv{g}\|_{H^{1}(\mathbb{R}; \mathbb{C}^{3})}=0.
\end{equation} 
{\rm (iv)} Each function $\vv{u}\in \mathcal{G}_{\gamma, \mu,s}$ is a classical solution of the system \eqref{SP} for some $\omega_{1}$, $\omega_{2}$, $\omega_{3}\in \mathbb{R}$. Furthermore, there exist numbers $\theta_{1}$, $\theta_{2}\in \mathbb{R}$ and non-negative functions $\rho_{j}$ such that $u_{1}(x)=e^{i\theta_{1}}\rho_{1}(x)$, $u_{1}(x)=e^{i\theta_{2}}\rho_{2}(x)$ and $u_{3}(x)=e^{i(\theta_{1}+\theta_{2})}\rho_{3}(x)$ for all $x\in \mathbb{R}$.
\end{theorem}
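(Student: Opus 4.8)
The plan is to obtain Theorem~\ref{SPI} by applying the concentration--compactness principle of Lions to an arbitrary minimizing sequence $\{\vv{u}_{n}\}$ of $I(\gamma,\mu,s)$, and then to extract the Euler--Lagrange and symmetry information in part~(iv). I would begin with two preliminary facts. Boundedness and coercivity: arguing exactly as in the derivation of \eqref{GC1}--\eqref{CBC}, for $1<p<5$ the Gagliardo--Nirenberg and H\"older inequalities give $I(\gamma,\mu,s)>-\infty$ and show that every minimizing sequence is bounded in $H^{1}(\mathbb{R};\mathbb{C}^{3})$. Strict negativity: evaluating $E$ on the mass-preserving family $\vv{u}^{\lambda}(x)=\lambda^{1/2}\vv{u}(\lambda x)$ and choosing $\vv{u}$ real and positive so that $\Re\int u_{1}u_{2}\overline{u_{3}}>0$, the coupling contribution $-\alpha\lambda^{1/2}\Re\int u_{1}u_{2}\overline{u_{3}}$ dominates the kinetic term $O(\lambda^{2})$ as $\lambda\to0^{+}$, whence $I(\gamma,\mu,s)<0$.

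The heart of the argument is the strict subadditivity
\[
 I(\gamma,\mu,s)<I(\gamma_{1},\mu_{1},s_{1})+I(\gamma_{2},\mu_{2},s_{2})
\]
for every nontrivial splitting $(\gamma,\mu,s)=(\gamma_{1}+\gamma_{2},\mu_{1}+\mu_{2},s_{1}+s_{2})$, and this is where I expect the main obstacle, precisely because of the trilinear coupling and the three independent constraints. The cleanest ingredient is a ray-scaling estimate: for $\theta>1$ one has $I(\theta\gamma,\theta\mu,\theta s)<\theta\,I(\gamma,\mu,s)$. To see it, take a minimizing sequence and first replace each $\vv{u}_{n}$ by $(|u_{1,n}|,|u_{2,n}|,|u_{3,n}|)$, which by the diamagnetic inequality and $\Re\int u_{1}u_{2}\overline{u_{3}}\le\int|u_{1}u_{2}u_{3}|$ does not increase $E$ and renders $C_{n}:=\Re\int u_{1,n}u_{2,n}\overline{u_{3,n}}\ge0$; then the amplitude scaling $\vv{u}_{n}\mapsto\theta^{1/2}\vv{u}_{n}$ multiplies the three masses by $\theta$ and satisfies
\[
 E(\theta^{1/2}\vv{u}_{n})-\theta E(\vv{u}_{n})=-\frac{\beta}{p+1}\bigl(\theta^{(p+1)/2}-\theta\bigr)P_{n}-\alpha\bigl(\theta^{3/2}-\theta\bigr)C_{n},
\]
with $P_{n}:=\sum_{i}\|u_{i,n}\|_{L^{p+1}}^{p+1}$. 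Since $E(\vv{u}_{n})\to I<0$ forces $\frac{\beta}{p+1}P_{n}+\alpha C_{n}$ to stay bounded below by a positive constant, and both coefficients above are strictly negative for $\theta>1$, $p>1$, the right-hand side is bounded above by a negative constant, which yields the scaling estimate in the limit. This already gives strict subadditivity for proportional splittings; for a general splitting, and especially in the degenerate case in which a piece loses a component (say $s_{1}=0$, so that its coupling energy vanishes), the strict gain is a binding effect of the coupling, and verifying the inequality in these configurations is the principal technical difficulty.

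With $I<0$ and strict subadditivity available, I would apply Lions' lemma to the densities $\rho_{n}=\sum_{i}|u_{i,n}|^{2}$. Vanishing is excluded because it forces $P_{n}\to0$ and, via H\"older, $C_{n}\to0$, so $\liminf E(\vv{u}_{n})\ge0$, contradicting $I<0$; dichotomy is excluded by the strict subadditivity. Hence compactness holds: there are translations $\{y_{n}\}$ for which $\rho_{n}(\cdot+y_{n})$ is tight, $\vv{u}_{n}(\cdot+y_{n})\rightharpoonup\vv{\varphi}$ weakly in $H^{1}$, and tightness upgrades this to strong convergence in $L^{p+1}$ and in the cubic term, so that $\vv{\varphi}$ carries the masses $(\gamma,\mu,s)$ and $E(\vv{\varphi})\le I$; admissibility forces $E(\vv{\varphi})=I$, and matching of the total energies then forces convergence of the kinetic norms, giving strong $H^{1}$ convergence and $\vv{\varphi}\in\mathcal{G}_{\gamma,\mu,s}$. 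This is part~(i). Part~(ii) follows by the usual contradiction: a sequence violating \eqref{ABCD} would itself be minimizing, contradicting~(i). Part~(iii) is equivalent to~(ii) because $\mathcal{G}_{\gamma,\mu,s}$ is invariant under translations, so the infima in \eqref{ABCD} and \eqref{AB234} coincide.

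For part~(iv), since $\gamma,\mu,s>0$ every component of a minimizer is nonzero, so the three constraint functionals $\|u_{i}\|_{L^{2}}^{2}$ have linearly independent derivatives and the Lagrange multiplier theorem produces $\omega_{1},\omega_{2},\omega_{3}\in\mathbb{R}$ with $E'(\vv{u})+2(\omega_{1}u_{1},\omega_{2}u_{2},\omega_{3}u_{3})=0$, i.e.\ the system \eqref{SP}; a standard bootstrap using $H^{1}(\mathbb{R})\hookrightarrow L^{\infty}$ promotes $\vv{u}$ to a classical solution. Finally, comparing $\vv{u}$ with $(|u_{1}|,|u_{2}|,|u_{3}|)$ once more, the equality $E(\vv{u})=I=E(|\vv{u}|)$ forces equality in both the diamagnetic inequality and in $\Re\int u_{1}u_{2}\overline{u_{3}}\le\int|u_{1}u_{2}u_{3}|$: the former gives $u_{i}=e^{i\theta_{i}}|u_{i}|$ with each $\theta_{i}$ constant (using the maximum principle to ensure $|u_{i}|>0$), and the latter aligns the phases so that $\theta_{3}=\theta_{1}+\theta_{2}$, which is exactly the stated representation with $\rho_{j}=|u_{j}|\ge0$.
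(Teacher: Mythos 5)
Your preliminary steps (coercivity, $I(\gamma,\mu,s)<0$, exclusion of vanishing, the compactness upgrade, and parts (ii)--(iii)) match the paper, and your ray-scaling computation is correct as far as it goes. The genuine gap is exactly the point you flag and then leave unresolved: strict subadditivity for \emph{general} splittings. The estimate $I(\theta\gamma,\theta\mu,\theta s)<\theta\, I(\gamma,\mu,s)$ for $\theta>1$ yields \eqref{EII} only when $(\gamma_1,\mu_1,s_1)$ and $(\gamma_2,\mu_2,s_2)$ are proportional to $(\gamma,\mu,s)$. But in Lemma \ref{LF3} the splitting produced by dichotomy is not at your disposal: the limits $\gamma_1\in[0,\gamma]$, $\mu_1\in[0,\mu]$, $s_1\in[0,s]$ are whatever the sequence gives, and with three independent $L^2$ constraints the dangerous splittings are precisely the non-proportional and degenerate ones, such as $(\gamma,\mu,s)=(0,0,s)+(\gamma,\mu,0)$. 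On configurations with a vanishing component the energy decouples into scalar energies $E_1$, so no scaling argument can produce a strict gain there; the strict inequality must come from the binding effect of the coupling term. As written, your argument does not rule out dichotomy, so part (i) does not follow.

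The paper closes this hole by a different mechanism (Lemmas \ref{cvb}, \ref{Le11} and \ref{Le1}): it constructs minimizing sequences of compactly supported, symmetric-decreasing profiles, translates the two candidate pieces so their supports are disjoint, and symmetrically rearranges their sum; Albert's inequality \eqref{ESTi} then yields a quantitative kinetic-energy gain $R_n$ (see \eqref{FFss}) which is bounded away from zero thanks to Lemma \ref{DL1}, and the degenerate cases in which this gain could vanish are handled instead by comparison with the explicit solitons $\phi_\omega$, for which $\int_{\mathbb{R}}\phi_{\omega_1}\phi_{\omega_2}\phi_{\omega_3}\,dx>0$ makes the coupling strictly lower the energy (Cases (ii)--(v) in the proof of Lemma \ref{Le1}). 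One of these two ingredients --- a rearrangement-type gain or a binding estimate --- is indispensable, and your proposal contains neither. A smaller gap of the same nature occurs in your part (iv): equality in $\Re\int_{\mathbb{R}} u_1u_2\overline{u_3}\,dx\leq\int_{\mathbb{R}}|u_1||u_2||u_3|\,dx$ forces $e^{i(\theta_1+\theta_2)}=e^{i\theta_3}$ only if $\int_{\mathbb{R}}|u_1||u_2||u_3|\,dx>0$; the paper proves this positivity separately (claim \eqref{iuy}, again by soliton comparison), whereas you assume it implicitly.
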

We remark that a classical solution of \eqref{SP} is a function $\vv{u}\in H^{1}(\mathbb{R}; \mathbb{C}^{3})$ with $u_{i}\in C^{2}(\mathbb{R})$, which solves \eqref{SP} in the classical sense (that is, using the classical notion of derivative).

Theorem \ref{SPI} is obtained via variational approach and using  the concentration compactness method of P.L. Lions \cite{PLL1}.  Similar techniques have been used previously in \cite{Albert} (see also \cite{BBOtra, BB2016, NZW}) to prove the existence and orbital stability of standing wave solutions to NLS-KdV systems. Notice that if $(\gamma_{1},\mu_{1}, s_{1})\neq (\gamma_{2},\mu_{2}, s_{2})$, then the sets $\mathcal{G}_{\gamma_{1}, \mu_{1},s_{1}}$ and $\mathcal{G}_{\gamma_{2}, \mu_{2},s_{2}}$ are disjoint; that is, the set of minimizers  $\mathcal{G}_{\gamma, \mu,s}$ forms a true three-parameter family.

For $p=2$ and $\gamma=\mu=s>0$, we have an explicit characterization of the set of minimizers $\mathcal{G}_{\gamma, \mu, s}$.

\begin{theorem} \label{CSW}
Let $p=2$. For any $\omega>0$ fixed,
\begin{equation*}
\mathcal{G}_{\gamma(\omega), \gamma(\omega), \gamma(\omega)}=\left\{\left(e^{i\theta_{1}}\psi_{\omega}(\cdot+y), e^{i\theta_{2}}\psi_{\omega}(\cdot+y), e^{i(\theta_{1}+\theta_{2})}\psi_{\omega}(\cdot+y)\right): \theta_{1}, \theta_{2}, y\in\mathbb{R}\right\},
\end{equation*}
where 
\begin{equation*}
\gamma(\omega)=\frac{12\sqrt{2}\,\omega^{3/2}}{(\alpha+\beta)^{2}}, \quad \psi_{\omega}(x)=\frac{3\omega}{(\alpha+\beta)}\mathrm{sech}^{2}\left(\frac{1}{2}\sqrt{2\omega}x\right).
\end{equation*}
\end{theorem}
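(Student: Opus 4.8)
The plan is to reduce the vector minimization \eqref{MP} with equal masses to a scalar constrained problem whose minimizer is explicit. The mechanism is the pointwise arithmetic--geometric mean inequality $\rho_1\rho_2\rho_3\le\frac13(\rho_1^3+\rho_2^3+\rho_3^3)$, which at $p=2$ matches \emph{exactly} the cubic self-interaction $\frac{\beta}{3}|u_i|^3$ coming from $\frac{\beta}{p+1}|u_i|^{p+1}$; this is precisely why the statement is restricted to $p=2$ and $\gamma=\mu=s$. Together with the diamagnetic inequality this produces a lower bound for $E$ that is saturated only by a single scalar profile replicated in all three components.

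First I would derive the lower bound on the whole constraint set. For an arbitrary admissible $\vv u$ put $\rho_i=|u_i|$. Using $\|\partial_x\rho_i\|_{L^2}\le\|\partial_x u_i\|_{L^2}$ and $\Re\int u_1u_2\overline{u_3}\le\int\rho_1\rho_2\rho_3\le\frac13\sum_i\|\rho_i\|_{L^3}^3$, the last step being AM--GM, I obtain
\begin{equation*}
E(\vv u)\ge\sum_{i=1}^3\left(\frac12\|\partial_x\rho_i\|_{L^2}^2-\frac{\alpha+\beta}{3}\|\rho_i\|_{L^3}^3\right)=\sum_{i=1}^3\widetilde E(\rho_i),
\end{equation*}
where $\widetilde E(\rho):=\frac12\|\partial_x\rho\|_{L^2}^2-\frac{\alpha+\beta}{3}\|\rho\|_{L^3}^3$. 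Since each $\rho_i$ satisfies $\|\rho_i\|_{L^2}^2=\gamma$, it follows that $E(\vv u)\ge 3\,\widetilde I(\gamma)$, where $\widetilde I(\gamma):=\inf\{\widetilde E(\rho):\rho\in H^1(\mathbb{R}),\ \|\rho\|_{L^2}^2=\gamma\}$, so that $I(\gamma,\gamma,\gamma)\ge 3\widetilde I(\gamma)$.

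Next I would identify the scalar problem. The functional $\widetilde E$ is the energy of the focusing quadratic scalar NLS, which is $L^2$-subcritical in one dimension; by the classical scalar theory (a scalar version of Theorem \ref{SPI}) its constrained minimizers exist and, up to translation and a constant phase, coincide with the unique positive solution of the Euler--Lagrange equation $-\psi''+2\omega\psi=(\alpha+\beta)\psi^2$, $\omega$ being the Lagrange multiplier. A direct computation identifies this solution as $\psi_\omega(x)=\frac{3\omega}{\alpha+\beta}\mathrm{sech}^2(\frac12\sqrt{2\omega}x)$ and gives $\|\psi_\omega\|_{L^2}^2=\gamma(\omega)$; since $\omega\mapsto\gamma(\omega)$ is a strictly increasing bijection of $(0,\infty)$, the relation $\gamma=\gamma(\omega)$ fixes $\omega$ uniquely. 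Choosing $\vv u=(\psi_\omega,\psi_\omega,\psi_\omega)$ turns AM--GM into an equality, whence $E(\psi_\omega,\psi_\omega,\psi_\omega)=3\widetilde E(\psi_\omega)=3\widetilde I(\gamma(\omega))$; combined with the lower bound this gives $I(\gamma(\omega),\gamma(\omega),\gamma(\omega))=3\widetilde I(\gamma(\omega))$ and shows $(\psi_\omega,\psi_\omega,\psi_\omega)\in\mathcal G_{\gamma(\omega),\gamma(\omega),\gamma(\omega)}$.

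Finally I would carry out the equality-case analysis, which I expect to be the delicate part. If $\vv u$ is any minimizer, every inequality above must be saturated simultaneously. Saturating $\sum_i\widetilde E(\rho_i)=3\widetilde I(\gamma)$ forces each $\rho_i=|u_i|$ to be a scalar minimizer, hence $\rho_i=\psi_\omega(\cdot+y_i)$; saturating AM--GM forces $\rho_1=\rho_2=\rho_3$ pointwise, so the translations coincide at a common $y$. Because $\psi_\omega>0$ everywhere, equality in the diamagnetic inequality forces each $u_i$ to equal $\psi_\omega(\cdot+y)$ times a constant phase $e^{i\theta_i}$, while equality in $\Re\int u_1u_2\overline{u_3}\le\int\rho_1\rho_2\rho_3$ forces $u_1u_2\overline{u_3}\ge0$, i.e. $\theta_3=\theta_1+\theta_2$. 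This reproduces exactly the claimed family. The main obstacle is this equality analysis together with the scalar uniqueness input: one must invoke the classical fact that the positive $H^1$ solution of $-\psi''+2\omega\psi=(\alpha+\beta)\psi^2$ is unique up to translation, and then track precisely the equality conditions in the diamagnetic and AM--GM inequalities to pin down the constant phases and the single common translation.
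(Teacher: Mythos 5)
Your proposal is correct and follows essentially the same route as the paper: the paper's key lemma makes exactly your reduction $I(\gamma,\gamma,\gamma)=3J(\gamma)$ via the Young/AM--GM inequality $\rho_{1}\rho_{2}\rho_{3}\le\frac{1}{3}(\rho_{1}^{3}+\rho_{2}^{3}+\rho_{3}^{3})$ absorbed into the cubic self-interaction at $p=2$, and its equality-case analysis likewise forces a single nonnegative profile shared by all three components together with the phase relation $\theta_{3}=\theta_{1}+\theta_{2}$. The only cosmetic differences are that the paper phrases the inequality as H\"older plus Young on $L^{3}$ norms (rather than pointwise AM--GM) and identifies the common profile as $\psi_{\omega}$ through the Lagrange multiplier ODE, while you invoke the scalar uniqueness-of-minimizers theory; both rest on the same ingredients.
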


The stability theory involves yet another variational formulation of standing waves solutions for \eqref{NLS}. For fixed $\gamma>0$ and $\mu>0$, let
\begin{equation}\label{Nvp}
J(\gamma, \mu)=\inf\left\{E(\vv{u}): \vv{u}\in H^{1}(\mathbb{R}; \mathbb{C}^{3}),\,\, Q_{1}(\vv{u})=\gamma \quad\text{and}\quad Q_{2}(\vv{u})=\mu \right\}.
\end{equation}
Following our convention, let us define a minimizing sequence for $J(\gamma, \mu)$ to be a sequence $\left\{\vv{u}_{n}\right\}$ in $H^{1}(\mathbb{R}; \mathbb{C}^{3})$ such that  $Q_{1}(\vv{u}_{n})\rightarrow \gamma$,  $Q_{2}(\vv{u}_{n})\rightarrow \mu$,  and $E(\vv{u}_{n})\rightarrow J(\gamma, \mu)$ as $n$ goes to $+\infty$.  The set of nontrivial minimizers for $J(\gamma, \mu)$ is
\begin{equation*}
 \mathcal{M}_{\gamma, \mu}=\inf\left\{ \vv{u}\in H^{1}(\mathbb{R}; \mathbb{C}^{3}) :E( \vv{u})=J(\gamma, \mu),
\,\,Q_{1}(\vv{u})=\gamma \quad\text{and}\quad Q_{2}(\vv{u})=\mu\right\}.
\end{equation*}

We remark that the family of variational problems associated to $J(\gamma, \mu)$ are suitable for studying the orbital stability  of standing waves solutions for \eqref{NLS} because both $E$ and the functionals $Q_{1}$ and $Q_{2}$  are invariant with regard to the flow generated by  \eqref{NLS}.

The following theorem gives the existence of a minimizer for $J(\gamma, \mu)$.

\begin{theorem} \label{NTE}
Suppose $1<p<5$. Then the following statements are true for all $\gamma>0$ and all $\mu>0$.\\
{\rm (i)}  Any minimizing sequence $\left\{\vv{u}_{n}\right\}$ of  $J(\gamma, \mu)$ is relatively compact in $H^{1}(\mathbb{R}; \mathbb{C}^{3})$ up to translations. In addition, the set $\mathcal{M}_{\gamma, \mu}$ is non-empty.\\
{\rm (ii)} The set of minimizers $\mathcal{M}_{\gamma, \mu}$ forms a true two-parameter family in the sense that
if $(\gamma_{1}, \mu_{1})\neq(\gamma_{2}, \mu_{2})$, then the sets $\mathcal{M}_{\gamma_{1}, \mu_{1}}$  and $\mathcal{M}_{\gamma_{2}, \mu_{2}}$ are disjoint.\\
{\rm (iii)} For every $\vv{u}\in \mathcal{M}_{\gamma, \mu}$, there exist numbers $\theta_{1}$, $\theta_{2}\in \mathbb{R}$ and functions $\rho_{1}(x)\geq 0$, $\rho_{2}(x)\geq 0$,  $\rho_{3}(x)\geq 0$, for all $x\in \mathbb{R}$,  such that $u_{1}(x)=e^{i\theta_{1}}\rho_{1}(x)$, $u_{2}(x)=e^{i\theta_{2}}\rho_{2}(x)$ and $u_{3}(x)=e^{i(\theta_{1}+\theta_{2})}\rho_{3}(x)$. 
\end{theorem}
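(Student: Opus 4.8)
The plan is to reduce the two-constraint problem \eqref{Nvp} to the three-constraint problem $I(\gamma',\mu',s')$ of Theorem~\ref{SPI}, and then to transfer compactness, non-emptiness and the structural description directly from that theorem. The cornerstone is the elementary identity
\[
J(\gamma,\mu)=\inf\left\{I(\gamma',\mu',s'):\ \gamma',\mu',s'\ge 0,\ \gamma'+s'=\gamma,\ \mu'+s'=\mu\right\},
\]
which is immediate from the definitions, since imposing $Q_{1}=\gamma$ and $Q_{2}=\mu$ amounts to prescribing $\|u_{1}\|_{L^{2}}^{2}=\gamma-s$, $\|u_{2}\|_{L^{2}}^{2}=\mu-s$ and $\|u_{3}\|_{L^{2}}^{2}=s$ for some $s\in[0,\min\{\gamma,\mu\}]$. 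Writing $f(s):=I(\gamma-s,\mu-s,s)$, the identity reads $J(\gamma,\mu)=\min_{s\in[0,\min\{\gamma,\mu\}]}f(s)$.

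For part~(i) I would first note that $E$ is bounded below on the constraint set and that any minimizing sequence $\{\vv{u}_{n}\}$ is bounded in $H^{1}$: both follow from the Gagliardo--Nirenberg bounds \eqref{GC1}--\eqref{GC2} exactly as in the global well-posedness discussion, now using that $Q_{1},Q_{2}$ control each $\|u_{i,n}\|_{L^{2}}$. Passing to a subsequence, set $\|u_{i,n}\|_{L^{2}}^{2}\to\gamma_{i}\ge0$, so that $\gamma_{1}+\gamma_{3}=\gamma$ and $\gamma_{2}+\gamma_{3}=\mu$. Normalizing each component to have mass exactly $\gamma_{i}$ (multiplying $u_{i,n}$ by $(\gamma_{i}/\|u_{i,n}\|_{L^{2}}^{2})^{1/2}\to1$) yields an admissible sequence for $I(\gamma_{1},\gamma_{2},\gamma_{3})$ whose energy differs from $E(\vv{u}_{n})$ by $o(1)$; hence $J(\gamma,\mu)=\lim_{n}E(\vv{u}_{n})\ge I(\gamma_{1},\gamma_{2},\gamma_{3})$, and the reverse inequality is built into the identity above. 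Thus $J(\gamma,\mu)=I(\gamma_{1},\gamma_{2},\gamma_{3})$ and $\{\vv{u}_{n}\}$ is a minimizing sequence for $I(\gamma_{1},\gamma_{2},\gamma_{3})$ in the sense of the convention recorded before the theorem. Provided $\gamma_{1},\gamma_{2},\gamma_{3}>0$, Theorem~\ref{SPI}(i) applies verbatim and furnishes translations $y_{n}$ along which a subsequence converges in $H^{1}$ to some $\vv{\varphi}\in\mathcal{G}_{\gamma_{1},\gamma_{2},\gamma_{3}}$. Since $Q_{1}(\vv{\varphi})=\gamma$, $Q_{2}(\vv{\varphi})=\mu$ and $E(\vv{\varphi})=J(\gamma,\mu)$, this $\vv{\varphi}$ lies in $\mathcal{M}_{\gamma,\mu}$, giving simultaneously relative compactness up to translation and $\mathcal{M}_{\gamma,\mu}\ne\emptyset$.

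Parts~(ii) and~(iii) should then be short corollaries. For~(ii), every $\vv{u}\in\mathcal{M}_{\gamma,\mu}$ satisfies $Q_{1}(\vv{u})=\gamma$ and $Q_{2}(\vv{u})=\mu$ by definition, so distinct parameter pairs force disjoint sets. For~(iii), the equality chain above shows that any $\vv{u}\in\mathcal{M}_{\gamma,\mu}$ is simultaneously a minimizer for $I(\|u_{1}\|_{L^{2}}^{2},\|u_{2}\|_{L^{2}}^{2},\|u_{3}\|_{L^{2}}^{2})$, i.e.\ it lies in the corresponding set $\mathcal{G}$; the decomposition $u_{1}=e^{i\theta_{1}}\rho_{1}$, $u_{2}=e^{i\theta_{2}}\rho_{2}$, $u_{3}=e^{i(\theta_{1}+\theta_{2})}\rho_{3}$ with $\rho_{j}\ge0$ is then exactly Theorem~\ref{SPI}(iv).

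The main obstacle is the positivity of the three limiting masses $\gamma_{1},\gamma_{2},\gamma_{3}$, equivalently the statement that $f$ attains its minimum at an interior point $s^{\ast}\in(0,\min\{\gamma,\mu\})$. The mechanism I would exploit is that the trilinear coupling $-\alpha\,\Re\int_{\mathbb{R}}u_{1}u_{2}\overline{u_{3}}\,dx$ strictly lowers the energy once all three components are present with aligned phases: at the endpoints $s=0$ (where $u_{3}\equiv0$) and $s=\min\{\gamma,\mu\}$ (where $u_{1}\equiv0$ or $u_{2}\equiv0$) the coupling vanishes and the problem decouples into scalar minimizations, whereas transferring a small amount of mass $s=\varepsilon^{2}$ into the missing component, with phase chosen so that $\Re\int_{\mathbb{R}}u_{1}u_{2}\overline{u_{3}}\,dx>0$, changes $f$ by $-c\,\varepsilon+O(\varepsilon^{2})$ for some $c>0$. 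Concretely, I would take the scalar minimizers at the relevant endpoint, graft on a small correlated bump of unit $L^{2}$-mass in the absent component, and expand $E$ to order $\varepsilon$, the linear gain from the coupling dominating the quadratic cost of the kinetic, potential and mass-redistribution terms (consistent with the half-power $\|\partial_{x}u_{i}\|_{L^{2}}^{1/2}$ appearing in \eqref{GC2}). This yields $f(s)<f(0)$ and $f(s)<f(\min\{\gamma,\mu\})$ for small $s>0$; by continuity of $f$ on the compact interval the minimizer is then interior, forcing $\gamma_{1},\gamma_{2},\gamma_{3}>0$ and closing the argument.
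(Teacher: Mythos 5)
Your overall route coincides with the paper's: the paper's Lemma \ref{LSwer} performs exactly your reduction, showing that along a minimizing sequence for $J(\gamma,\mu)$ the third mass converges to some $a$ with $J(\gamma,\mu)=I(\gamma-a,\mu-a,a)$, after which Theorem \ref{SPI} is invoked. The genuine difference is the mechanism for the key step, namely locating $a$ away from the endpoints. The paper rules out $a=0$ (and $a=\min\{\gamma,\mu\}$ when $\gamma\neq\mu$) by a Lagrange-multiplier contradiction: if the endpoint value were attained, then $(\phi_{\omega_1},\phi_{\omega_2},0)$ would be a minimizer of $J(\gamma,\mu)$, hence a solution of \eqref{SP}, and the third equation of \eqref{SP} would force $\alpha\,\phi_{\omega_1}\phi_{\omega_2}\equiv 0$, impossible for everywhere-positive solitons. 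You instead rule out endpoints by a test-function expansion with a grafted bump; at $s=0$ your expansion is correct, and it is a perfectly good (arguably more quantitative) alternative to the paper's argument there.

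The gap is at the other endpoint in the symmetric case $\gamma=\mu$. There, at $s=\min\{\gamma,\mu\}=\gamma$, \emph{both} $u_1$ and $u_2$ vanish, not one. Grafting mass $\varepsilon^{2}$ into each of the two missing components makes the coupling $-\alpha\,\Re\int_{\mathbb{R}}u_1u_2\overline{u_3}\,dx$ bilinear in the two small bumps, hence of size $O(\varepsilon^{2})$ --- the same order as the kinetic cost --- so your claimed gain $-c\varepsilon+O(\varepsilon^{2})$ is false at this endpoint. Whether the quadratic gain beats the quadratic cost is precisely the threshold question ($\alpha<\alpha_{\ast}$ versus $\alpha>\alpha_{\ast}$) for the semitrivial wave $(0,0,e^{2i\omega t}\varphi)$ mentioned in the introduction, so no local expansion can settle it for all $\alpha>0$. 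Notably, the paper does not rule this endpoint out either: Lemma \ref{LSwer} only gives $0<a\leq\min\{\gamma,\mu\}$, and the proof of Theorem \ref{NTE} treats $a=\gamma=\mu$ as a separate case, falling back on scalar compactness (Remark \ref{AsDf}). Your interiority claim can nevertheless be rescued by a global rather than local comparison: in your notation, $f(\gamma)=I(0,0,\gamma)=S(\gamma)$, while $f(0)=I(\gamma,\gamma,0)=2S(\gamma)<S(\gamma)$ because $S(\gamma)<0$, so the minimum of $f$ is never attained at $s=\gamma$ (this comparison also shows the paper's separate case is in fact vacuous). With that repair --- plus a word on the normalization step when a limiting mass vanishes, where the factor $(\gamma_i/\|u_{i,n}\|_{L^{2}}^{2})^{1/2}$ does not tend to $1$ and you must instead drop the component and control its contributions via Gagliardo--Nirenberg --- your proof closes.
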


It is clear that if $\vv{u}\in \mathcal{M}_{\gamma, \mu}$, then $(e^{2i\omega_{1}t}u_{1}(x), e^{2i\omega_{2}t}u_{2}(x), e^{2i(\omega_{1}+\omega_{2})t}u_{3}(x))$ is a standing wave solution of \eqref{NLS}, where the parameters $\omega_{1}$, $\omega_{2}\in \mathbb{R}$ appear as Lagrange multipliers. Furthermore, by part (iii) of the Theorem \ref{NTE} and from system \eqref{SP}, it is not hard to show (see Lemma \ref{LSwer} below) that if $\vv{u}\in \mathcal{M}_{\gamma, \mu}$ and $\gamma\neq\mu$, then $u_{i}\neq 0$, for $i=1$, $2$, $3$. In \cite{CCH2, CCaH}, the authors study the orbital stability/instability of standing waves with only one nonzero component. To be more specific, it was shown in \cite{CCH2, CCaH}  that $(e^{2i\omega t}\varphi, 0, 0)$ and $(0,  e^{2i\omega t}\varphi, 0)$ are stable, for every $\alpha>0$, while $(0, 0, e^{2i\omega t}\varphi)$ is  stable, if $0<\alpha<\alpha_{\ast}$, and it is  unstable,
if $\alpha>\alpha_{\ast}$, for a suitable constant $\alpha_{\ast}>0$. Here,  $\varphi\in H^{1}(\mathbb{R})$ is the unique
positive radial (least-energy) solution of 
\begin{equation*}
-\partial^{2}_{x}\varphi+2\omega\varphi-\beta|\varphi|^{p-1}\varphi=0\quad \text{in $\mathbb{R}$.}
\end{equation*}

As a corollary of the Theorem \ref{NTE}, we have that the set of minimizer $\mathcal{M}_{\gamma, \mu}$ is a stable set for the flow generated by system \eqref{NLS}.

\begin{corollary} \label{CLE}
Let $\gamma>0$ and  $\mu>0$. Then the set $\mathcal{M}_{\gamma, \mu}$ is $H^{1}(\mathbb{R}; \mathbb{C}^{3})$-stable for the flow generated by system \eqref{NLS} in the following sense. For arbitrary $\epsilon>0$, there exists $\delta>0$ such that, if $\vv{u}_{0}\in H^{1}(\mathbb{R}; \mathbb{C}^{3})$ verifies
\begin{equation*}
\inf_{\vv{\varphi}\in \mathcal{M}_{\gamma,\mu}} \|\vv{u}_{0}-\vv{\varphi}\|_{H^{1}(\mathbb{R}; \mathbb{C}^{3})}<\delta,
\end{equation*}
then the solution $\vv{u}(x,t)$ of the system \eqref{NLS} with the initial data $\vv{u}(x,0)=\vv{u}_{0}$ satisfies 
\begin{equation*}
\inf_{\vv{\varphi}\in \mathcal{M}_{\gamma, \mu}}\|\vv{u}(\cdot,t)-\vv{\varphi}\|_{H^{1}(\mathbb{R}; \mathbb{C}^{3})}<\epsilon, \quad \text{for all $t\geq 0$.}
\end{equation*}
\end{corollary}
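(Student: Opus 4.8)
The plan is to derive the orbital stability from the relative compactness of minimizing sequences established in Theorem \ref{NTE}(i), via the classical contradiction argument of Cazenave and Lions. Note first that since $1<p<5$ the Cauchy problem is globally well posed, so for any datum the solution $\vv{u}(\cdot,t)$ exists for all $t\ge 0$ and the three conservation laws of Proposition \ref{WP} hold; also, because $E$, $Q_{1}$, $Q_{2}$ are translation invariant, the minimizer set $\mathcal{M}_{\gamma,\mu}$ is itself invariant under $\tau_{y}$. Suppose, for contradiction, that the conclusion fails. Then there exist $\epsilon_{0}>0$, a sequence of initial data $\{\vv{u}_{0,n}\}\subset H^{1}(\mathbb{R};\mathbb{C}^{3})$, and times $t_{n}\ge 0$ such that
\begin{equation*}
\inf_{\vv{\varphi}\in\mathcal{M}_{\gamma,\mu}}\|\vv{u}_{0,n}-\vv{\varphi}\|_{H^{1}}<\tfrac{1}{n},\qquad\text{yet}\qquad \inf_{\vv{\varphi}\in\mathcal{M}_{\gamma,\mu}}\|\vv{u}_{n}(\cdot,t_{n})-\vv{\varphi}\|_{H^{1}}\ge\epsilon_{0},
\end{equation*}
where $\vv{u}_{n}$ is the global solution of \eqref{NLS} with $\vv{u}_{n}(0)=\vv{u}_{0,n}$.

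The next step is to show that $\{\vv{u}_{n}(\cdot,t_{n})\}$ is a minimizing sequence for $J(\gamma,\mu)$. Choosing $\vv{\varphi}_{n}\in\mathcal{M}_{\gamma,\mu}$ with $\|\vv{u}_{0,n}-\vv{\varphi}_{n}\|_{H^{1}}\to 0$ and using that $E$, $Q_{1}$, $Q_{2}$ are continuous on $H^{1}(\mathbb{R};\mathbb{C}^{3})$, together with $E(\vv{\varphi}_{n})=J(\gamma,\mu)$, $Q_{1}(\vv{\varphi}_{n})=\gamma$, $Q_{2}(\vv{\varphi}_{n})=\mu$, I obtain $E(\vv{u}_{0,n})\to J(\gamma,\mu)$, $Q_{1}(\vv{u}_{0,n})\to\gamma$ and $Q_{2}(\vv{u}_{0,n})\to\mu$. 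The conservation laws then transfer these limits to time $t_{n}$, giving $E(\vv{u}_{n}(t_{n}))\to J(\gamma,\mu)$, $Q_{1}(\vv{u}_{n}(t_{n}))\to\gamma$ and $Q_{2}(\vv{u}_{n}(t_{n}))\to\mu$, which is precisely the definition of a minimizing sequence for $J(\gamma,\mu)$.

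Finally I would invoke Theorem \ref{NTE}(i): after passing to a subsequence there are translations $\{y_{n}\}\subset\mathbb{R}$ and $\vv{\psi}\in H^{1}(\mathbb{R};\mathbb{C}^{3})$ with $\vv{u}_{n}(\cdot+y_{n},t_{n})\to\vv{\psi}$ strongly in $H^{1}$. By continuity of $E$, $Q_{1}$, $Q_{2}$ the limit satisfies $E(\vv{\psi})=J(\gamma,\mu)$, $Q_{1}(\vv{\psi})=\gamma$, $Q_{2}(\vv{\psi})=\mu$, so $\vv{\psi}\in\mathcal{M}_{\gamma,\mu}$; by translation invariance $\tau_{y_{n}}\vv{\psi}=\vv{\psi}(\cdot-y_{n})\in\mathcal{M}_{\gamma,\mu}$ as well. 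Consequently
\begin{equation*}
\inf_{\vv{\varphi}\in\mathcal{M}_{\gamma,\mu}}\|\vv{u}_{n}(\cdot,t_{n})-\vv{\varphi}\|_{H^{1}}\le\|\vv{u}_{n}(\cdot,t_{n})-\vv{\psi}(\cdot-y_{n})\|_{H^{1}}=\|\vv{u}_{n}(\cdot+y_{n},t_{n})-\vv{\psi}\|_{H^{1}}\longrightarrow 0,
\end{equation*}
contradicting the lower bound $\epsilon_{0}$ and completing the argument.

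I expect the only substantive analytic input to be Theorem \ref{NTE}(i); everything else is bookkeeping. The step deserving the most care is the reduction in the second paragraph, namely verifying that the time-evolved states genuinely form a minimizing sequence: this combines the continuity of the conserved functionals with the conservation laws, and one must also use the translation invariance of $\mathcal{M}_{\gamma,\mu}$ to absorb the a priori unbounded sequence of translations $\{y_{n}\}$ produced by the compactness theorem.
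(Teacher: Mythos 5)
Your proposal is correct and follows essentially the same route as the paper: the Cazenave--Lions contradiction argument, showing via the conservation laws that $\{\vv{u}_{n}(\cdot,t_{n})\}$ is a minimizing sequence for $J(\gamma,\mu)$, invoking Theorem \ref{NTE}(i) for compactness up to translations, and using translation invariance of $\mathcal{M}_{\gamma,\mu}$ to contradict the assumed lower bound. Your treatment is in fact slightly more careful than the paper's at one point, since you pick a sequence $\vv{\varphi}_{n}\in\mathcal{M}_{\gamma,\mu}$ rather than a single $\vv{\varphi}$ when passing from the hypothesis on the initial data to the limits of $E$, $Q_{1}$, $Q_{2}$.
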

The proof of Corollary \ref{CLE} makes only use of the conservation laws \eqref{En} and \eqref{Mas} and the compactness of any minimizing sequence for \eqref{Nvp}.

The rest of the paper is organized as follows. In Section \ref{S:1}, we prove Theorem \ref{SPI}. Theorem \ref{CSW} is proved in Section \ref{S:3/2}. Finally, Section \ref{S:2} contains the proof of Theorem \ref{NTE} and Corollary \ref{CLE}.

{\bf Notation.} We denote by $H_{\mathbb{C}}^{1}(\mathbb{R})$ the Sobolev space of all complex-valued functions $H^{1}(\mathbb{R},\mathbb{C})$,  its norm will be denoted by $\|\cdot\|_{H^{1}(\mathbb{R})}$.  The space of all real-valued  functions $f$ in $H_{\mathbb{C}}^{1}(\mathbb{R})$ will be denoted by $H^{1}(\mathbb{R})$. The space $L^{p}(\mathbb{R})$, denoted by $L^{p}$ for shorthand, is equipped with the norm $\|\cdot\|_{L^{p}}$. We also denote  by $H^{1}(\mathbb{R}; \mathbb{C}^{3})$ the product space $H^{1}(\mathbb{R}; \mathbb{C}^{3})=H^{1}_{\mathbb{C}}(\mathbb{R})\times H^{1}_{\mathbb{C}}(\mathbb{R})\times H^{1}_{\mathbb{C}}(\mathbb{R})$. Throughout this paper, the letter $C$ will frequently be used to denote various constants whose actual value is not important and which may vary from one line to the next.

\section{Existence of standing wave solutions}
\label{S:1}
In this section, we give the proof of Theorem \ref{SPI}. For simplicity, throughout this section we assume that $\beta=1$. All the results hold also for the case $\beta>0$, where minor straightforward modifications are required in the proofs. We have divided the proof of Theorem \ref{SPI} into a sequence of lemmas. 

\begin{lemma} \label{L1}
For all $\gamma$, $\mu$ and $s>0$, one has $-\infty< I(\gamma, \mu,s)<0$.
\end{lemma}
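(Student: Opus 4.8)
The plan is to establish the two inequalities separately. Let me denote the energy functional as in \eqref{En}.

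\medskip

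\textbf{The lower bound $I(\gamma,\mu,s)>-\infty$.} First I would show that the energy is bounded below on the constraint set. Take any $\vv{u}\in H^{1}(\mathbb{R};\mathbb{C}^{3})$ with $\|u_{1}\|_{L^{2}}^{2}=\gamma$, $\|u_{2}\|_{L^{2}}^{2}=\mu$, $\|u_{3}\|_{L^{2}}^{2}=s$. The two potentially problematic negative terms in $E$ are $-\frac{1}{p+1}\sum_{i}\|u_{i}\|_{L^{p+1}}^{p+1}$ (recall $\beta=1$ in this section) and $-\alpha\,\Re\int u_{1}u_{2}\overline{u_{3}}\,dx$. I would control both using the Gagliardo--Nirenberg inequality exactly as in \eqref{GC1}--\eqref{GC2}: since the $L^{2}$ masses are fixed, \eqref{GC1} gives $\|u_{i}\|_{L^{p+1}}^{p+1}\le C\|\partial_{x}u_{i}\|_{L^{2}}^{(p-1)/2}$ and \eqref{GC2} gives the cubic term bounded by $C\sum_{i}\|\partial_{x}u_{i}\|_{L^{2}}^{1/2}$. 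Because $1<p<5$ forces the exponents $(p-1)/2<2$ and $1/2<2$, these negative contributions grow strictly more slowly than the quadratic kinetic term $\frac{1}{2}\sum_{i}\|\partial_{x}u_{i}\|_{L^{2}}^{2}$, so by Young's inequality $E(\vv{u})\ge \frac{1}{4}\sum_{i}\|\partial_{x}u_{i}\|_{L^{2}}^{2}-C(\gamma,\mu,s)$ for a constant depending only on the fixed masses. This is the same coercivity computation underlying \eqref{CBC}, and it shows $I(\gamma,\mu,s)\ge -C>-\infty$.

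\medskip

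\textbf{The upper bound $I(\gamma,\mu,s)<0$.} This is the substantive part and the step I expect to be the main obstacle, since I must produce an explicit admissible test function whose energy is strictly negative. The natural idea is a scaling argument: fix a profile and exploit the fact that the negative terms can be made to dominate. I would take a fixed real, nonnegative, even function $f\in H^{1}(\mathbb{R})$ and set $u_{i}^{\lambda}(x)=a_{i}\,\lambda^{1/2}f(\lambda x)$ with the constants $a_{i}$ chosen so that $\|u_{i}^{\lambda}\|_{L^{2}}^{2}=\gamma,\mu,s$ respectively (this $L^{2}$-preserving scaling keeps the mass constraints satisfied for every $\lambda>0$). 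Under this scaling the kinetic term scales like $\lambda^{2}$, the $L^{p+1}$ term like $\lambda^{(p-1)/2}$, and the cubic interaction term like $\lambda^{1/2}$. Crucially I must ensure the interaction term $-\alpha\,\Re\int u_{1}^{\lambda}u_{2}^{\lambda}\overline{u_{3}^{\lambda}}\,dx$ is genuinely negative: since $f\ge 0$ and the $a_{i}>0$, the integral $\int u_{1}^{\lambda}u_{2}^{\lambda}\overline{u_{3}^{\lambda}}\,dx=a_{1}a_{2}a_{3}\lambda^{1/2}\int f^{3}\,dx>0$, so with the $-\alpha$ sign this term is strictly negative. Then
\begin{equation*}
E(\vv{u}^{\lambda})=A\lambda^{2}-B\lambda^{(p-1)/2}-D\lambda^{1/2}
\end{equation*}
with $A,B,D>0$ (here $B$ collects the $L^{p+1}$ contributions and $D=\alpha a_{1}a_{2}a_{3}\int f^{3}\,dx$). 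As $\lambda\to 0^{+}$, the dominant negative term is $-D\lambda^{1/2}$ (since $1/2<(p-1)/2$ is not always true, but $1/2\le 2$ and $(p-1)/2<2$, so both negative terms beat $\lambda^{2}$), hence $E(\vv{u}^{\lambda})<0$ for $\lambda$ small enough. Because $\vv{u}^{\lambda}$ is admissible, this yields $I(\gamma,\mu,s)\le E(\vv{u}^{\lambda})<0$.

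\medskip

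The one point requiring care, and the reason I flag the upper bound as the main obstacle, is guaranteeing the sign of the cubic term rather than merely bounding its absolute value: one must choose all three components built from the same nonnegative profile with positive coefficients so that $\Re\int u_{1}u_{2}\overline{u_{3}}\,dx>0$, making $-\alpha$ times it strictly negative. Once that alignment is in place the scaling computation is routine, and combining both halves gives $-\infty<I(\gamma,\mu,s)<0$ as claimed.
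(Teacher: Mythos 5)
Your proposal is correct and follows essentially the same route as the paper: the lower bound via the Gagliardo--Nirenberg estimates \eqref{GC1}--\eqref{E1} showing the negative terms grow with exponents strictly less than $2$ in $\|\partial_{x}u_{i}\|_{L^{2}}$, and the upper bound via the $L^{2}$-preserving scaling $u_{i,\theta}(x)=\sqrt{\theta}\,u_{i}(\theta x)$ with nonnegative profiles so that the cubic interaction term, of order $\sqrt{\theta}$, forces the energy negative as $\theta\to 0^{+}$. The only cosmetic differences are that you build all three components from one profile $f$ and keep the $L^{p+1}$ term in the scaling computation, whereas the paper takes three positive functions and simply discards that (already negative) term.
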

\begin{proof}
Let $ \vv{u}\in H^{1}(\mathbb{R}; \mathbb{C}^{3})$ such that $\|u_{1}\|^{2}_{L^{2}}=\gamma$, $\|u_{2}\|^{2}_{L^{2}}=\mu$ and $\|u_{3}\|^{2}_{L^{2}}=s$. To prove that $I(\gamma, \mu,s)>-\infty$ is suffices to bound $E(\vv{u})$ from below. Indeed, from the Gagliardo-Nirenberg inequality  \eqref{GC1}, Young's inequality  and H\"older's inequality we see that
\begin{align}\nonumber
\left|\Re\int_{\mathbb{R}}u_{1}u_{2}\overline{u_{3}}\, dx\right| &\leq \int_{\mathbb{R}}|u_{1}||u_{3}||{u_{3}}|\, dx\leq \|u_{1}\|_{L^{3}}\|u_{2}\|_{L^{3}}\|u_{3}\|_{L^{3}}\\
&\leq \frac{1}{3}\sum^{3}_{i=1} \|u_{i}\|^{3}_{L^{3}}\leq C\sum^{3}_{i=1} \|\partial_{x}u_{i}\|^{\frac{1}{2}}_{L^{2}}, \label{E1}
\end{align}
where $C$ is independent of $u_{1}$, $u_{2}$ and $u_{3}$. Since $p-1<4$, we have 
\begin{align*}
E(\vv{u})&=\sum^{3}_{i=1}\left\{\frac{1}{2}\int_{\mathbb{R}}|\partial_{x}u_{i}|^{2}\,dx-\frac{1}{p+1}\int_{\mathbb{R}}|u_{i}|^{p+1}\,dx\right\}-\alpha\,\Re\int_{\mathbb{R}}u_{1}u_{2}\overline{u_{3}}\, dx \\
&\geq \frac{1}{2}\sum^{3}_{i=1}\|\partial_{x}u_{i}\|^{2}_{L^{2}}-C\sum^{3}_{i=1}\|\partial_{x}u_{i}\|^{(p-1)/2}_{L^{2}}-C\sum^{3}_{i=1} \|\partial_{x}u_{i}\|^{\frac{1}{2}}_{L^{2}}>-\infty.
\end{align*}
On the other hand, to see that $I(\gamma, \mu,s)<0$, choose $\vv{u}\in H^{1}(\mathbb{R}; \mathbb{C}^{3})$ such that $\|u_{1}\|^{2}_{L^{2}}=\gamma$, $\|u_{2}\|^{2}_{L^{2}}=\mu$, $\|u_{3}\|^{2}_{L^{2}}=s$ and $u_{i}(x)>0$ for all $x\in \mathbb{R}$, $i=1$, $2$, $3$. For each $\theta>0$, set $\vv{u}_{\theta}=(u_{1,\theta}, u_{2,\theta}, u_{3,\theta})$, where $u_{i,\theta}(x)=\sqrt{\theta}u_{i}(\theta x)$. Then for all $\theta$, we see that $\|u_{i,\theta}\|^{2}_{L^{2}}=\|u_{i}\|^{2}_{L^{2}}$ and 
\begin{align*}
E(\vv{u}_{\theta})&=\sum^{3}_{i=1}\left\{\frac{1}{2}\int_{\mathbb{R}}|\partial_{x}u_{i,\theta}|^{2}\,dx-\frac{1}{p+1}\int_{\mathbb{R}}|u_{i,\theta}|^{p+1}\,dx\right\}-\alpha\,\int_{\mathbb{R}}u_{1,\theta}u_{2,\theta}{u_{3,\theta}}\, dx \\
&\leq  \frac{\theta^{2}}{2}\sum^{3}_{i=1}\|\partial_{x}u_{i}\|^{2}_{L^{2}}-\sqrt{\theta}\,\alpha\int_{\mathbb{R}}u_{1}u_{2}{u_{3}}\, dx,
\end{align*}
hence, by taking $\theta$ sufficiently small, we obtain $E(\vv{u}_{\theta})<0$.
\end{proof}

\begin{lemma} \label{DL1} 
Suppose  $\left\{\vv{u}_{n}\right\}=\left\{(u_{1,n},u_{2,n}, u_{3,n})\right\}$ is a minimizing sequence of problem $I(\gamma,\mu,s )$, then there exist constants $B>0$ and $\delta_{i}>0$ such that, for all sufficiently large $n$, \\
{\rm i)} $\sum^{3}_{i=1}\|\partial_{x}u_{i,n}\|^{2}_{L^{2}}\leq B$. \\
{\rm ii)} If $\gamma>0$, $\mu\geq 0$ and $s\geq 0$, then $\|\partial_{x}u_{1, n}\|^{}_{L^{2}}\geq \delta_{1}$ .\\
{\rm iii)} If $\gamma\geq0$, $\mu>0$ and  $s\geq 0$,  then $\|\partial_{x}u_{2, n}\|^{}_{L^{2}}\geq \delta_{2}$ .\\
{\rm iv)} If $\gamma\geq0$, $\mu\geq0$ and  $s> 0$,  then $\|\partial_{x}u_{3, n}\|^{}_{L^{2}}\geq \delta_{3}$ .
\end{lemma}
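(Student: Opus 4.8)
The plan is to handle the upper bound (i) and the lower bounds (ii)--(iv) by different mechanisms: (i) follows from coercivity, while (ii)--(iv) will be proved by contradiction, resting on a strict monotonicity of $I$ in its mass arguments. For (i), I would use that $E(\vv{u}_n)\to I(\gamma,\mu,s)$, which is a finite number by Lemma \ref{L1}, so $E(\vv{u}_n)$ is bounded above. Setting $t_{i,n}=\|\partial_x u_{i,n}\|_{L^2}$, the lower bound for $E$ obtained inside the proof of Lemma \ref{L1} reads $E(\vv{u}_n)\ge\sum_{i=1}^{3} g(t_{i,n})$, where $g(t)=\tfrac12 t^2-Ct^{(p-1)/2}-Ct^{1/2}$. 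Since $1<p<5$ forces $(p-1)/2<2$, the one-variable function $g$ is bounded below and coercive; a uniform upper bound on $\sum_i g(t_{i,n})$ therefore forces each $t_{i,n}$, and hence $\sum_i t_{i,n}^2$, to stay bounded for all large $n$, which is (i).

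For the lower bounds I would first record the strict inequality $I(\gamma,\mu,s)<I(0,\mu,s)$ valid whenever $\gamma>0$, together with the two analogues obtained by sending $\mu$ or $s$ to $0$. Write $e(m)=\inf\{\tfrac12\|\partial_x v\|_{L^2}^2-\tfrac1{p+1}\|v\|_{L^{p+1}}^{p+1}:\ v\in H^1_{\mathbb{C}}(\mathbb{R}),\ \|v\|_{L^2}^2=m\}$ for the scalar energy. When the first component vanishes the trilinear term in \eqref{En} drops out and $E$ decouples into two scalar energies, so $I(0,\mu,s)=e(\mu)+e(s)$. Taking a competitor made of a scalar ground state of mass $\gamma$ placed far from near-optimizers of $e(\mu)$ and $e(s)$ makes the coupling term negligible and gives $I(\gamma,\mu,s)\le e(\gamma)+e(\mu)+e(s)$; since $e(\gamma)<0$ for $\gamma>0$ (the scalar subcritical problem, exactly as in Lemma \ref{L1}), the strict inequality follows.

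Next, for (ii) with $\gamma>0$, I would argue by contradiction: if $\|\partial_x u_{1,n}\|_{L^2}\to 0$ along a subsequence, then by \eqref{GC1} and the $L^3$ estimate in \eqref{E1}, together with the uniform bound from (i), both $\|u_{1,n}\|_{L^{p+1}}$ and $\|u_{1,n}\|_{L^3}$ tend to $0$. Consequently the self-interaction of $u_{1,n}$ and the entire cubic coupling vanish in the limit, so that $E(\vv{u}_n)=E(0,u_{2,n},u_{3,n})+o(1)\ge e(\mu_n)+e(s_n)+o(1)$, with $\mu_n=\|u_{2,n}\|_{L^2}^2\to\mu$ and $s_n=\|u_{3,n}\|_{L^2}^2\to s$. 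Continuity of $e$ (explicit by scaling) then forces $I(\gamma,\mu,s)=\lim E(\vv{u}_n)\ge e(\mu)+e(s)=I(0,\mu,s)$, contradicting the strict inequality; hence $\|\partial_x u_{1,n}\|_{L^2}\ge\delta_1$ for large $n$. The same reduction, using the corresponding monotonicity $I(\gamma,\mu,s)<I(\gamma,0,s)$ resp. $I(\gamma,\mu,s)<I(\gamma,\mu,0)$, yields (iii) and (iv).

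The routine parts are the Gagliardo--Nirenberg manipulations and the coercivity in (i); I expect the genuine content to be the strict monotonicity $I(\gamma,\mu,s)<I(0,\mu,s)$, i.e. the fact that activating a component can only lower the constrained infimum. The point requiring care is the passage $e(\mu_n)+e(s_n)\to e(\mu)+e(s)$, which relies on continuity of the scalar ground-state energy (including the value $e(0)=0$), and the observation that the vanishing of one gradient simultaneously kills that component's $L^{p+1}$ contribution and the whole trilinear coupling — precisely what the $L^3$ and $L^{p+1}$ estimates deliver once the gradient tends to $0$.
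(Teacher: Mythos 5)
Your proof is correct. Part (i) is exactly the paper's argument: the lower bound from Lemma \ref{L1} gives $E(\vv{u}_n)\ge\sum_{i}g(\|\partial_x u_{i,n}\|_{L^2})$ with $g(t)=\tfrac12 t^2-Ct^{(p-1)/2}-Ct^{1/2}$ coercive, so bounded energy forces bounded gradients. For (ii)--(iv) you share the paper's starting point --- argue by contradiction and use Gagliardo--Nirenberg to show that $\|\partial_x u_{1,n}\|_{L^2}\to 0$ annihilates both the $L^{p+1}$ self-interaction and the trilinear coupling --- but you close the contradiction by a different mechanism. The paper substitutes an explicit rescaled bump $\psi_\theta$ of mass $\gamma$ and negative scalar energy $T<0$ (see \eqref{Ex1}) for the vanishing component \emph{inside the same sequence}, comparing $I(\gamma,\mu,s)\le E(\psi_\theta,|u_{2,n}|,|u_{3,n}|)$; this requires first passing to $|\vv{u}_n|$ via Lemma \ref{DL5} so that the new coupling term $-\alpha\int_{\mathbb{R}}\psi_\theta|u_{2,n}||u_{3,n}|\,dx$ is nonpositive and can be dropped. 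You instead establish the a priori strict inequality $I(\gamma,\mu,s)\le e(\gamma)+e(\mu)+e(s)<e(\mu)+e(s)=I(0,\mu,s)$ and show that the vanishing scenario forces $\lim_n E(\vv{u}_n)\ge e(\mu)+e(s)$. Your route needs two ingredients the paper avoids --- the decoupling identity $I(0,\mu,s)=e(\mu)+e(s)$ and the continuity of the scalar infimum $e$ on $[0,\infty)$, the latter cheap from the scaling law $e(m)=m^{(p+3)/(5-p)}e(1)$, which also covers the edge cases $\mu=0$ or $s=0$ --- but in exchange it requires no modulus/non-negativity step, and the continuity argument cleanly absorbs the fact that the masses of a minimizing sequence are only asymptotically (not exactly) equal to $(\gamma,\mu,s)$, a point the paper glosses over when it asserts $I(\gamma,\mu,s)\le E(\psi_\theta,|u_{2,n}|,|u_{3,n}|)$ without renormalizing $|u_{2,n}|$, $|u_{3,n}|$. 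Your strict monotonicity statement is also essentially an embryonic form of the strict subadditivity proved later in Lemma \ref{Le1}, so it is a natural and reusable intermediate; the paper's version is more self-contained at this stage of the development.
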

\begin{proof}
First, notice that $\|u_{i, n}\|^{2}_{L^{2}}$ are bounded for all $n$ and $i=1$, $2$, $3$. Moreover, from \eqref{E1} we have 
\begin{align*}
\frac{1}{2}\sum^{3}_{i=1}\|\partial_{x}u_{i,n}\|^{2}_{L^{2}}&=E(\vv{u}_{n})+\frac{1}{p+1}\sum^{3}_{i=1}\|u_{i,n}\|^{p+1}_{L^{p+1}}+\alpha\Re\int_{\mathbb{R}}u_{1,n}u_{2,n}\overline{u_{3,n}}\, dx \\
&\leq \sup_{n\in \mathbb{N}}E(\vv{u}_{n})+C\sum^{3}_{i=1}\|\partial_{x}u_{i}\|^{(p-1)/2}_{L^{2}}+C\sum^{3}_{i=1} \|\partial_{x}u_{i}\|^{\frac{1}{2}}_{L^{2}}.
\end{align*}
Since $\sum^{3}_{i=1}\|\partial_{x}u_{i,n}\|^{2}_{L^{2}}$ is now shown to be bounded by a smaller power, it follows that the sequence $\left\{\vv{u}_{n}\right\}$ is bounded in $H^{1}(\mathbb{R}; \mathbb{C}^{3})$. Thus we obtain the proof of statement i) of the lemma. 

Next we prove ii). We assume that the conclusion of statement ii) is false, then by passing to subsequence if necessary, we see that  $\lim_{n\rightarrow\infty}\|\partial_{x}u_{1, n}\|^{}_{L^{2}}=0$. Notice  that if $\left\{\vv{u}_{n}\right\}$ is a minimizing sequence, then $\left\{|\vv{u}_{n}|\right\}=\left\{(|u_{1,n}|,|u_{2,n}|, |u_{3,n}|)\right\}$ is also a minimizing sequence for problem $I(\gamma,\mu,s)$ (see Lemma \ref{DL5} below). Now since $\|\partial_{x}|u_{1, n}|\|^{2}_{L^{2}}\leq \|\partial_{x}u_{1, n}\|^{2}_{L^{2}}$, it follows that 
$\lim_{n\rightarrow\infty}\|\partial_{x}|u_{1, n}|\|^{}_{L^{2}}=0$. Moreover, from the Gagliardo-Nirenberg inequality and statement i), we have that $\lim_{n\rightarrow\infty}\|u_{1, n}\|^{p+1}_{L^{p+1}}=0$ and 
\begin{align*}
\int_{\mathbb{R}}|u_{1,n}||u_{2,n}||{u_{3,n}}|\, dx &\leq C \|u_{1,n}\|_{L^{4}}\|u_{2,n}\|_{L^{4}}\|u_{3,n}\|_{L^{2}}\\
&\leq C\|\partial_{x}|u_{1,n}|\|^{1/4}_{L^{2}}\rightarrow 0\quad \text{as $n\rightarrow +\infty$.}
\end{align*}
In particular, this implies that 
\begin{equation}\label{EE1}
I(\gamma,\mu,s )=\lim_{n\rightarrow\infty} E(|\vv{u}_{n}|)=\lim_{n\rightarrow\infty} \sum^{3}_{i=2}\left\{\frac{1}{2}\|\partial_{x}|u_{i,n}|\|^{2}_{L^{2}}-\frac{1}{p+1}\|u_{i,n}\|^{p+1}_{L^{p+1}}\right\}.
\end{equation}
On the other hand, pick any $\psi\in H^{1}(\mathbb{R})$ non-negative such that $\|\psi\|^{2}_{2}=\gamma$ and let $\psi_{\theta}(x)=\sqrt{\theta}\psi(\theta x)$. Notice that $I(\gamma,\mu,s )\leq E(\psi_{\theta}, |u_{2,n}|, |u_{3,n}|)$ for all $n$. Moreover, if we define
\begin{equation}\label{Ex1}
T=\frac{\theta^{2}}{2}\int_{\mathbb{R}}(\partial_{x}\psi)^{2}\, dx- \frac{\theta^{(p-1)/2}}{p+1}\int_{\mathbb{R}}\psi^{p+1}\, dx
\end{equation}
then $T<0$ for sufficiently small $\theta > 0$ because $(p-1)/2<2$. Thus, for all $n\in \mathbb{N}$, 
\begin{align*}
I(\gamma,\mu,s )&\leq E(\psi_{\theta}, |u_{2,n}|, |u_{3,n}|)=\sum^{3}_{i=2}\left\{\frac{1}{2}\|\partial_{x}|u_{i,n}|\|^{2}_{L^{2}}-\frac{1}{p+1}\|u_{i,n}\|^{p+1}_{L^{p+1}}\right\} \\
&-\alpha\int_{\mathbb{R}}\psi_{\theta}|u_{2,n}||{u_{3,n}}|\, dx+T\\
&\leq \sum^{3}_{i=2}\left\{\frac{1}{2}\|\partial_{x}|u_{i,n}|\|^{2}_{L^{2}}-\frac{1}{p+1}\|u_{i,n}\|^{p+1}_{L^{p+1}}\right\}+T.
\end{align*}
Thus, we have
\begin{equation*}
I(\gamma,\mu,s )\leq\lim_{n\rightarrow\infty} \sum^{3}_{i=2}\left\{\frac{1}{2}\|\partial_{x}|u_{i,n}|\|^{2}_{L^{2}}-\frac{1}{p+1}\|u_{i,n}\|^{p+1}_{L^{p+1}}\right\}+T,
\end{equation*}
a contradiction to \eqref{EE1}. Therefore, there exists a constant $\delta_{1}>0$ such that $\|\partial_{x}u_{1, n}\|^{2}_{L^{2}}\geq \|\partial_{x}|u_{1, n}|\|^{2}_{L^{2}}\geq \delta_{1}$. This completes the proof of (ii). The proof of (iii) and (iv) follow the same line of reasoning and we omit it.
\end{proof}

Now to each minimizing sequence $\left\{\vv{u}_{n}\right\}$ of problem \eqref{MP}, we associate 
the following sequence of nondecreasing functions (L\'evy concentration functions) $M_{n}:[0,\infty)\rightarrow [0,\gamma+\mu+s]$ defined by
\begin{equation*}
M_{n}(r):=\sup_{y\in \mathbb{R}}\int^{y+r}_{y-r}\left\{|u_{1, n}(x)|^{2}+|u_{2, n}(x)|^{2}+|u_{3, n}(x)|^{2}\right\}\, dx.
\end{equation*}
Since $\|u_{1,n}\|^{2}_{L^{2}}\rightarrow\gamma$,  $\|u_{2,n}\|^{2}_{L^{2}}\rightarrow\mu$ and $\|u_{3,n}\|^{2}_{L^{2}}\rightarrow s$, as $n$ goes to $+\infty$, then $\left\{M_{n}\right\}$ is a uniformly bounded sequence of nondecreasing functions on $[0,\infty)$. By Helly's selection theorem we see that $\left\{M_{n}\right\}$ must have a subsequence, which we again denote by $\left\{M_{n}\right\}$, that converges pointwise and uniformly on compact sets to a nonnegative nondecreasing function $M: [0,\infty)\rightarrow [0,\gamma+\mu+s]$. Let
\begin{equation}\label{DL}
\lambda:=\lim_{r\rightarrow\infty}\lim_{n\rightarrow\infty}\sup_{y\in \mathbb{R}}\int^{y+r}_{y-r}\left\{|u_{1, n}(x)|^{2}+|u_{2, n}(x)|^{2}+|u_{3, n}(x)|^{2}\right\}\, dx.
\end{equation}
Then, $0\leq\lambda \leq \gamma+\mu+s$.  Lions' concentration compactness lemma \cite{PLL1, PLL2} shows that there are three (mutually exclusive) possibilities for the value of $\lambda$:\\
{\rm (i)} (Vanishing)  $\lambda=0$. Since  $M(r)$ is non-negative and nondecreasing, it follows that
\begin{equation*}
M(r)=\lim_{n\rightarrow\infty} M_{n}(r)=\lim_{n\rightarrow\infty} \sup_{y\in \mathbb{R}}\int^{y+r}_{y-r}\left\{|u_{1, n}(x)|^{2}+|u_{2, n}(x)|^{2}+|u_{3, n}(x)|^{2}\right\}\, dx=0,
\end{equation*}
for every $r\in [0,\infty)$, or\\
{\rm (ii)} (Dichotomy) $\lambda\in (0, \gamma+\mu+s)$, or \\
{\rm (iii)} (Compactness) $\lambda=\gamma+\mu+s$;  in this case, there exists a sequence $y_{n}\in \mathbb{R}$ such that $|u_{1, n}(\cdot+y_{n})|^{2}+|u_{2, n}(\cdot+y_{n})|^{2}+|u_{3, n}(\cdot+y_{n})|^{2}$ is tight, namely, for all $\epsilon>0$ there exists $r(\epsilon)$ such that
\begin{equation*}
\int^{y_{n}+r(\epsilon)}_{y_{n}-r(\epsilon)}\left\{|u_{1, n}(x)|^{2}+|u_{2, n}(x)|^{2}+|u_{3, n}(x)|^{2}\right\}\, dx\geq(\gamma+\mu+s)- \epsilon.
\end{equation*}
for all sufficiently large $n$.

In what follows, we study separately the three possibilities: $\lambda=0$, $0<\lambda<\gamma+\mu+s$ and $\lambda=\gamma+\mu+s$. 
We begin by ruling out the ``vanishing'' possibility. 

The following lemma is well-know.  For a proof we refer to Lemma 3.9 in \cite{SLC}.
\begin{lemma} \label{CLLQ} Let $q>2$.  Suppose $\left\{f_{n}\right\}$ is a sequence of functions which is bounded in $H^{1}_{\mathbb{C}}(\mathbb{R})$ and for some $R>0$,
\begin{equation}\label{CLa}
\lim_{n\rightarrow\infty}\sup_{y\in\mathbb{R}^{}} \int^{y+R}_{y-R}\left|f_{n}(x)\right|^{2}dx=0.
\end{equation}
Then  $\lim_{n\rightarrow \infty}\|f_{n}\|_{L^{q}}=0$.
\end{lemma}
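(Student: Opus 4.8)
The plan is to estimate $\|f_n\|_{L^q}$ by localizing on a partition of $\mathbb{R}$ into intervals of fixed length $2R$, controlling the contribution of each interval by a Gagliardo--Nirenberg interpolation inequality, and then using the vanishing hypothesis \eqref{CLa} to factor out a positive power of a quantity that tends to $0$. First I would partition $\mathbb{R}=\bigcup_{k\in\mathbb{Z}}Q_k$ into the disjoint intervals $Q_k=[2Rk,2R(k+1))$, each of length $2R$. Setting $\varepsilon_n:=\sup_{y\in\mathbb{R}}\int_{y-R}^{y+R}|f_n(x)|^2\,dx$, hypothesis \eqref{CLa} reads $\varepsilon_n\to0$, and since each $Q_k$ is an interval of length $2R$ one has the uniform bound $\|f_n\|_{L^2(Q_k)}^2\le\varepsilon_n$ for every $k$. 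Let $M:=\sup_n\|f_n\|_{H^1(\mathbb{R})}^2$, which is finite by the boundedness assumption.

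Next, on each $Q_k$ I would apply the one--dimensional Gagliardo--Nirenberg inequality $\|g\|_{L^q(Q_k)}\le C\|g\|_{H^1(Q_k)}^{a}\|g\|_{L^2(Q_k)}^{1-a}$, with $a=\tfrac12-\tfrac1q\in(0,\tfrac12)$ and a constant $C=C(R,q)$ independent of $k$ (the intervals are translates of one another). Raising to the power $q$ and recalling $aq=\tfrac{q-2}{2}$, $(1-a)q=\tfrac{q+2}{2}$ gives
\begin{equation*}
\int_{Q_k}|f_n|^q\,dx\le C\,\|f_n\|_{H^1(Q_k)}^{(q-2)/2}\|f_n\|_{L^2(Q_k)}^{(q+2)/2}=C\big(\|f_n\|_{H^1(Q_k)}\|f_n\|_{L^2(Q_k)}\big)^{(q-2)/2}\|f_n\|_{L^2(Q_k)}^{2}.
\end{equation*}
Using $\|f_n\|_{H^1(Q_k)}^2\le M$ and $\|f_n\|_{L^2(Q_k)}^2\le\varepsilon_n$ in the first factor, this is at most $C(M\varepsilon_n)^{(q-2)/4}\|f_n\|_{L^2(Q_k)}^2$, where the prefactor no longer depends on $k$.

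Finally I would sum over $k\in\mathbb{Z}$. Since the $Q_k$ partition $\mathbb{R}$, the residual sum equals the global $L^2$ norm, $\sum_k\|f_n\|_{L^2(Q_k)}^2=\|f_n\|_{L^2(\mathbb{R})}^2\le M$, whence
\begin{equation*}
\|f_n\|_{L^q(\mathbb{R})}^q\le C(M\varepsilon_n)^{(q-2)/4}\sum_{k\in\mathbb{Z}}\|f_n\|_{L^2(Q_k)}^2\le CM\,(M\varepsilon_n)^{(q-2)/4}.
\end{equation*}
Because $q>2$ the exponent $(q-2)/4$ is strictly positive, so letting $\varepsilon_n\to0$ yields $\|f_n\|_{L^q}\to0$. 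The step I expect to be delicate is the summation over the infinitely many intervals: one cannot simply bound each local $L^q$ contribution by the vanishing quantity and add up, since a bare power $\varepsilon_n^{(q-2)/4}$ summed over all $k$ would diverge. The device that makes it work is to extract the vanishing factor \emph{uniformly} in $k$---using that every local norm $\|f_n\|_{H^1(Q_k)}$ is dominated by the global bound $M$---while retaining exactly one factor $\|f_n\|_{L^2(Q_k)}^2$ inside the sum, so that what remains is the convergent series $\sum_k\|f_n\|_{L^2(Q_k)}^2=\|f_n\|_{L^2(\mathbb{R})}^2$.
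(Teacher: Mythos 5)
Your proof is correct. The paper does not prove this lemma itself --- it simply cites Lemma 3.9 of \cite{SLC} --- and your argument (partition $\mathbb{R}$ into length-$2R$ intervals, apply Gagliardo--Nirenberg on each with a constant uniform over translates, pull out the vanishing factor $\varepsilon_n^{(q-2)/4}$ uniformly in $k$ while keeping one local $\|f_n\|_{L^2(Q_k)}^2$ inside the sum so that it resums to the global $L^2$ norm) is precisely the classical proof of Lions' vanishing lemma found in such references, so it matches the intended argument.
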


\begin{lemma} \label{L3}
For every minimizing sequence $\left\{\vv{u}_{n}\right\}$ for problem $I(\gamma,\mu, s)$, we have that $\lambda>0$.
\end{lemma}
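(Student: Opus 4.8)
The plan is to exclude the vanishing alternative by contradiction: I assume $\lambda=0$ and deduce that $I(\gamma,\mu,s)\ge 0$, which contradicts Lemma \ref{L1}. First I would unpack what $\lambda=0$ means. By the definition \eqref{DL} of $\lambda$ together with the ``vanishing'' case, for every fixed $r>0$ one has
\[
\lim_{n\rightarrow\infty}\sup_{y\in\mathbb{R}}\int^{y+r}_{y-r}\Bigl\{|u_{1, n}(x)|^{2}+|u_{2, n}(x)|^{2}+|u_{3, n}(x)|^{2}\Bigr\}\, dx=0.
\]
Since each integrand is nonnegative, the same limit holds with the sum replaced by any single term $|u_{i,n}|^{2}$; thus every component $u_{i,n}$ satisfies the hypothesis \eqref{CLa} of Lemma \ref{CLLQ} with, say, $R=1$.

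Next I would invoke the boundedness and the vanishing lemma. By part i) of Lemma \ref{DL1} the sequence $\{\vv{u}_{n}\}$ is bounded in $H^{1}(\mathbb{R};\mathbb{C}^{3})$, so each $\{u_{i,n}\}$ is bounded in $H^{1}_{\mathbb{C}}(\mathbb{R})$. Applying Lemma \ref{CLLQ} with $q=3$ and with $q=p+1$ (both admissible, since $p>1$ forces $p+1>2$) yields
\[
\lim_{n\rightarrow\infty}\|u_{i,n}\|_{L^{3}}=0\qquad\text{and}\qquad \lim_{n\rightarrow\infty}\|u_{i,n}\|_{L^{p+1}}=0,\qquad i=1,2,3.
\]
I would then use these to kill both nonlinear contributions to the energy. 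The self-interaction terms vanish immediately from the $L^{p+1}$ convergence, while for the coupling term H\"older's inequality gives
\[
\left|\Re\int_{\mathbb{R}}u_{1,n}u_{2,n}\overline{u_{3,n}}\, dx\right|\leq \|u_{1,n}\|_{L^{3}}\|u_{2,n}\|_{L^{3}}\|u_{3,n}\|_{L^{3}}\longrightarrow 0.
\]
Consequently, from the definition \eqref{En} of $E$,
\[
E(\vv{u}_{n})=\frac{1}{2}\sum_{i=1}^{3}\|\partial_{x}u_{i,n}\|^{2}_{L^{2}}+o(1)\geq o(1),
\]
so that $\liminf_{n\rightarrow\infty}E(\vv{u}_{n})\geq 0$. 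Since $\{\vv{u}_{n}\}$ is a minimizing sequence, $E(\vv{u}_{n})\to I(\gamma,\mu,s)$, whence $I(\gamma,\mu,s)\geq 0$, contradicting Lemma \ref{L1}. This contradiction forces $\lambda>0$.

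This is a fairly standard vanishing-exclusion argument, so I do not expect a serious obstacle. The only point requiring genuine care is verifying that every exponent appearing in the nonlinearity lies strictly above $2$, so that the vanishing lemma \ref{CLLQ} is applicable: the relevant exponents are $3$ (from the cubic coupling) and $p+1$ (from the power nonlinearity), both of which exceed $2$ precisely because of the standing hypothesis $p>1$. A secondary routine check is that the nonnegativity of the integrands legitimately transfers the vanishing of the combined concentration function to each individual component.
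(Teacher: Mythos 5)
Your proposal is correct and follows essentially the same route as the paper: assume $\lambda=0$, use the boundedness from Lemma \ref{DL1}(i) together with the vanishing Lemma \ref{CLLQ} to show that both the $L^{p+1}$ self-interaction terms and (via H\"older with three $L^{3}$ norms) the coupling term tend to zero, so that $I(\gamma,\mu,s)\geq 0$, contradicting Lemma \ref{L1}. Your write-up is in fact slightly more careful than the paper's at two points: you justify passing from the combined concentration function to each component, and you phrase the conclusion with $\liminf$ and $o(1)$ rather than asserting a limit of the gradient terms that need not exist.
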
 
\begin{proof} Suppose on the contrary that $\lambda=0$. Then there exist a positive $R_{0}$ and a subsequence of the minimizing sequence $\left\{\vv{u}_{n}\right\}$, which we also denote by $\left\{\vv{u}_{n}\right\}$,  such that 
\begin{equation*}
\sup_{y\in\mathbb{R}^{}} \int^{y+R_{0}}_{y-R_{0}}\left\{|u_{1, n}(x)|^{2}+|u_{2, n}(x)|^{2}+|u_{3, n}(x)|^{2}\right\}\, dx\rightarrow 0, 
\end{equation*}
as $n$ goes to $+\infty$. Then \eqref{CLa} holds for $f_{n}=u_{i, n}$, for $i=1$, $2$, $3$. Since the sequence $\left\{u_{i, n}\right\}$ is bounded in $H^{1}_{\mathbb{C}}(\mathbb{R})$ by Lemma \ref{DL1}(i), then  Lemma \ref{CLLQ} says that  for all $q>2$,  $\|u_{i, n}\|_{L^{p}}\rightarrow 0$. In particular, by H\"older inequality we see that
\begin{equation*}
\left|\Re\int_{\mathbb{R}}u_{1, n}u_{2, n}\overline{u_{3, n}}\, dx\right|\leq \|u_{1, n}\|^{}_{L^{3}}\|u_{2, n}\|^{}_{L^{3}}\|u_{3, n}\|^{}_{L^{3}}\rightarrow 0.
\end{equation*}
Hence
\begin{equation*}
I(\gamma, \mu, s)=\lim_{n\rightarrow \infty}E(\vv{u}_{n})=\frac{1}{2}\sum^{3}_{i=1}\|\partial_{x}u_{i, n}\|^{2}_{L^{2}}\geq 0,
\end{equation*}
contradicting Lemma \ref{L1}.
\end{proof}

Now we rule out the possibility of dichotomy. We have divided the proof into a sequence of lemmas.
Before stating our next lemma we recall some results. 

\begin{remark}\label{AsDf}
Let $\gamma>0$. It is well known that the function
\begin{equation}\label{Ax1}
\phi_{\omega}(x)=\left\{{\omega(p+1)}\mathrm{sech}^{2}\left({((p-1)/2)\sqrt{2\omega}}\,x\right)\right\}^{\frac{1}{p-1}}
\end{equation}
minimizes the energy functional at fixed mass. More precisely, choose $\omega>0$ such that $\|\phi_{\omega}\|^{2}_{L^{2}}=\gamma$, then $\phi_{\omega}$ is a minimizer of the problem
\begin{equation}\label{Ax112}
S({\gamma}):=\inf\left\{E_{1}(u): {u}\in H^{1}(\mathbb{R}; \mathbb{C}),\,\,\|u\|^{2}_{L^{2}}=\gamma\right\},
\end{equation}
where 
\begin{equation}\label{Ebnn}
E_{1}(u)=\frac{1}{2}\|\partial_{x}u\|^{2}_{L^{2}}-\frac{1}{p+1}\|u\|^{p+1}_{L^{p+1}}.
\end{equation}
Moreover, if $\left\{f_{n}\right\}$ is any sequence of functions in $H^{1}(\mathbb{R}; \mathbb{C})$ such that $\|f_{n}\|^{2}_{L^{2}}\rightarrow\gamma$  and $E_{1}(f_{n})\rightarrow S({\gamma})$ as $n$ goes to $+\infty$, then there exist a subsequence $\left\{f_{n_{k}}\right\}$ and a sequence $\left\{y_{k}\right\}\subset \mathbb{R}$ such that $\left\{f_{n_{k}}(\cdot+y_{k})\right\}$ converges strongly in 
$H^{1}(\mathbb{R}; \mathbb{C})$ to $\phi_{\omega}(x)$. For a proof of such statement we refer to \cite{CB}.
\end{remark}

In the following lemma we show  that the value of $E(\vv{u})$ decreases when $\vv{u}$ is replaced by $|\vv{u}|$. Here and hereafter we use the notation $|\vv{u}|=(|u_{1}|, |u_{2}|, |u_{3}|)$.

\begin{lemma} \label{DL5} 
If $\vv{u}=(u_{1},u_{2},u_{3})\in H^{1}(\mathbb{R}; \mathbb{C}^{3})$, then  $E(|\vv{u}|)\leq E(\vv{u})$.
\end{lemma}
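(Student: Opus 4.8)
The plan is to prove $E(|\vv{u}|)\le E(\vv{u})$ by comparing the energy of $\vv{u}$ with the energy of $|\vv{u}|=(|u_1|,|u_2|,|u_3|)$ term by term, according to the three types of contributions in the functional \eqref{En}: the kinetic terms, the $L^{p+1}$ terms, and the trilinear coupling term. First I would recall the diamagnetic (or Kato) inequality: for any $u\in H^1_{\mathbb{C}}(\mathbb{R})$ the modulus $|u|$ belongs to $H^1(\mathbb{R})$ and one has the pointwise bound $|\partial_x|u|\,|\le|\partial_x u|$ almost everywhere, whence $\|\partial_x|u_i|\|_{L^2}^2\le\|\partial_x u_i\|_{L^2}^2$ for each $i$. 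This handles the kinetic part: the sum of the three kinetic terms can only decrease when passing to $|\vv{u}|$.

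Next, the potential terms $\frac{1}{p+1}\int|u_i|^{p+1}\,dx$ are manifestly unchanged since $\big||u_i|\big|=|u_i|$ pointwise, so these contributions are identical for $\vv{u}$ and $|\vv{u}|$. The only remaining term is the coupling term $-\alpha\,\Re\int_{\mathbb{R}}u_1u_2\overline{u_3}\,dx$. Here the key estimate is the elementary pointwise inequality
\begin{equation*}
\Re\big(u_1(x)u_2(x)\overline{u_3(x)}\big)\le|u_1(x)||u_2(x)||u_3(x)|,
\end{equation*}
valid because the real part of a complex number never exceeds its modulus and $|u_1u_2\overline{u_3}|=|u_1||u_2||u_3|$. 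Integrating over $\mathbb{R}$ gives $\Re\int u_1u_2\overline{u_3}\,dx\le\int|u_1||u_2||u_3|\,dx$, and since $\alpha>0$ this yields $-\alpha\,\Re\int u_1u_2\overline{u_3}\,dx\ge-\alpha\int|u_1||u_2||u_3|\,dx$. The right-hand side is exactly the coupling term evaluated at $|\vv{u}|$ (note that for the nonnegative triple $|\vv{u}|$ the product $|u_1||u_2||u_3|$ is real and nonnegative, so its real part equals itself).

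Finally I would assemble the three comparisons. Writing out $E(\vv{u})$ and $E(|\vv{u}|)$ from \eqref{En}, the kinetic terms satisfy $\frac12\sum\|\partial_x|u_i|\|_{L^2}^2\le\frac12\sum\|\partial_x u_i\|_{L^2}^2$, the potential terms are equal, and the coupling term satisfies $-\alpha\,\Re\int|u_1||u_2||u_3|\,dx\le-\alpha\,\Re\int u_1u_2\overline{u_3}\,dx$; summing these gives $E(|\vv{u}|)\le E(\vv{u})$. I do not expect any genuine obstacle here: the result is a routine application of the diamagnetic inequality together with the triangle inequality for the real part. The only point deserving a word of care is justifying the diamagnetic inequality itself (that $|u|\in H^1$ with $|\partial_x|u|\,|\le|\partial_x u|$), which is standard and can be cited; everything else is pointwise algebra followed by integration.
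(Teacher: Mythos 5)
Your proof is correct and follows essentially the same route as the paper: the diamagnetic inequality $\|\partial_x|u_i|\|_{L^2}\le\|\partial_x u_i\|_{L^2}$ for the kinetic terms, invariance of the $L^{p+1}$ terms, and the bound $\Re\bigl(u_1u_2\overline{u_3}\bigr)\le|u_1||u_2||u_3|$ combined with $\alpha>0$ for the coupling term. The paper's proof states exactly these ingredients, only more tersely; yours just spells out the details.
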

\begin{proof}
Let $f\in H_{\mathbb{C}}^{1}(\mathbb{R})$.  Since $\|\partial_{x}|f|\|^{2}_{L^{2}}\leq \|\partial_{x}f\|^{2}_{L^{2}}$ and $\alpha>0$, the lemma follows immediately.
\end{proof}

Recall that the symmetric rearrangement of a measurable function $f:[0,+\infty)\rightarrow \mathbb{R}$ is  defined by
\begin{equation*}
f^{*}(x)=\int^{+\infty}_{0}\chi^{\ast}_{\left\{|f|>t\right\}}(x)dt,
\end{equation*}
where $\chi^{\ast}_{\left\{|f|>t\right\}}$  denotes the characteristic function of a ball of volume $m(\left\{x: |f(x)|>t\right\})$  centered at the origin. It is well known that $\|f^{*}\|^{p}_{L^{p}}=\|f\|^{p}_{L^{p}}$ for any $1\leq p<\infty$ (see \cite[Chapter 3]{ELL}). Furthermore, from Lemmas 7.17 and 6.17  of \cite{ELL} we have that
\begin{equation}\label{Sdr}
\int_{\mathbb{R}}|\partial_{x}|f|^{*}|^{2}\,dx\leq\int_{\mathbb{R}}|\partial_{x}|f||^{2}\,dx\leq \int_{\mathbb{R}}|\partial_{x}f|^{2}\,dx.
\end{equation}
For the next lemma, we set $|\vv{u}|^{\ast}:=(|u_{1}|^{\ast},|u_{2}|^{\ast},|u_{3}|^{\ast})$.
\begin{lemma} \label{D456} 
Let $\vv{u}\in H^{1}(\mathbb{R}; \mathbb{C}^{3})$. Suppose that  $u_{1}$, $u_{2}$ and $u_{3}$ are functions with compact support on $\mathbb{R}$.  Then  $E(|\vv{u}|^{\ast})\leq E(|\vv{u}|)\leq E(\vv{u})$.
\end{lemma}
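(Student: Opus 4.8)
The second inequality $E(|\vv{u}|)\le E(\vv{u})$ is precisely the content of Lemma \ref{DL5}, so the plan is to concentrate on the first inequality $E(|\vv{u}|^{\ast})\le E(|\vv{u}|)$. Since each $|u_i|$ and each $|u_i|^{\ast}$ is a non-negative real-valued function, I would write out both energies and compare them term by term. Expanding $E(|\vv{u}|^{\ast})$ and $E(|\vv{u}|)$, the difference splits into three groups: the kinetic terms $\tfrac12\|\partial_x|u_i|^{\ast}\|_{L^2}^2$ against $\tfrac12\|\partial_x|u_i|\|_{L^2}^2$, the potential terms $\tfrac{1}{p+1}\||u_i|^{\ast}\|_{L^{p+1}}^{p+1}$ against $\tfrac{1}{p+1}\||u_i|\|_{L^{p+1}}^{p+1}$, and the single coupling term $\alpha\int_{\mathbb{R}}|u_1|^{\ast}|u_2|^{\ast}|u_3|^{\ast}\,dx$ against $\alpha\int_{\mathbb{R}}|u_1||u_2||u_3|\,dx$.

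For the first two groups the comparison is immediate. By the rearrangement inequality \eqref{Sdr}, $\|\partial_x|u_i|^{\ast}\|_{L^2}\le\|\partial_x|u_i|\|_{L^2}$, so each kinetic term does not increase. Because the symmetric rearrangement is equimeasurable with the original function, $\||u_i|^{\ast}\|_{L^{p+1}}^{p+1}=\||u_i|\|_{L^{p+1}}^{p+1}$, so the potential terms are unchanged and cancel. Hence, recalling that $\alpha>0$, the inequality $E(|\vv{u}|^{\ast})\le E(|\vv{u}|)$ reduces to establishing the product rearrangement inequality
\[
\int_{\mathbb{R}}|u_1||u_2||u_3|\,dx\le\int_{\mathbb{R}}|u_1|^{\ast}|u_2|^{\ast}|u_3|^{\ast}\,dx,
\]
which is the only substantive point.

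To prove this, I would use the layer-cake representation $f(x)=\int_0^{\infty}\chi_{\{f>s\}}(x)\,ds$, valid for each non-negative $|u_i|$, to rewrite the triple product integrals. Expanding the product and applying Tonelli's theorem yields
\[
\int_{\mathbb{R}}|u_1||u_2||u_3|\,dx=\int_0^{\infty}\!\!\int_0^{\infty}\!\!\int_0^{\infty} m\bigl(\{|u_1|>s\}\cap\{|u_2|>t\}\cap\{|u_3|>r\}\bigr)\,ds\,dt\,dr,
\]
together with the analogous identity for the rearranged functions $|u_i|^{\ast}$. The compact support hypothesis guarantees that all superlevel sets have finite measure, so the rearrangements and these triple integrals are well defined. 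The crucial observation is that, by the definition of $f^{\ast}$ recalled before the lemma, each set $\{|u_i|^{\ast}>s\}$ is an interval centered at the origin of length $m(\{|u_i|>s\})$; three such centered intervals are nested, so the measure of their intersection equals $\min_i m(\{|u_i|>s_i\})$. For the original superlevel sets one only has the trivial bound $m\bigl(\bigcap_i\{|u_i|>s_i\}\bigr)\le\min_i m(\{|u_i|>s_i\})$. Comparing the two representations pointwise in $(s,t,r)$ and integrating gives the desired inequality.

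The main obstacle is precisely this product rearrangement inequality; the reduction to the nesting property of centered intervals via the layer-cake formula is the efficient route around it, avoiding any appeal to the full Riesz rearrangement inequality. Everything else, namely the kinetic and potential comparisons, is a direct consequence of \eqref{Sdr} and the equimeasurability of the symmetric rearrangement.
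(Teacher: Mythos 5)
Your proof is correct, and its overall decomposition is exactly the paper's: the second inequality from Lemma \ref{DL5}, the kinetic terms from \eqref{Sdr}, the $L^{p+1}$ terms from equimeasurability, and everything reduced (using $\alpha>0$) to the triple-product comparison. Where you genuinely diverge is in that key step: the paper disposes of
\[
\int_{\mathbb{R}}|u_{1}||u_{2}||u_{3}|\,dx\;\leq\;\int_{\mathbb{R}}|u_{1}|^{\ast}|u_{2}|^{\ast}|u_{3}|^{\ast}\,dx
\]
in one line by citing Theorem 1 and Remark 2 of \cite{ISR}, i.e.\ inequality \eqref{DX}, whereas you prove this special case from scratch: layer-cake representation, Tonelli, and the observation that the superlevel sets $\{|u_{i}|^{\ast}>s\}$ are centered intervals, hence nested, so that
\[
m\bigl(\textstyle\bigcap_{i}\{|u_{i}|^{\ast}>s_{i}\}\bigr)=\min_{i} m(\{|u_{i}|>s_{i}\}),
\]
against which the original superlevel sets satisfy only the trivial bound. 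This is the standard proof of the three-function Hardy--Littlewood inequality, and every step checks out: joint measurability of $\{(x,s):|u_{i}(x)|>s\}$ justifies Tonelli, and $\{|u_{i}|^{\ast}>s\}$ is indeed, up to a null set, the centered interval of measure $m(\{|u_{i}|>s\})$, being the increasing union $\bigcup_{t>s}\{|u_{i}|>t\}^{\ast}$. What the citation buys the paper is brevity and a more general statement (rearrangement inequalities for translated and convolved arguments, in the spirit of Riesz and Brascamp--Lieb--Luttinger); what your argument buys is self-containedness and the clarification that, because all three functions are evaluated at the same point, no Riesz-type machinery is needed at all --- indeed the compact support hypothesis is not even necessary for this step, since Chebyshev's inequality already gives finiteness of the superlevel sets for $H^{1}$ functions.
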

\begin{proof}
By Theorem 1 and Remark 2 of \cite{ISR} we have
\begin{equation}\label{DX}
\int_{\mathbb{R}}|u_{1}||u_{3}||{u_{3}}|\, dx\leq \int_{\mathbb{R}}|u_{1}|^{\ast}|u_{3}|^{\ast}|{u_{3}}|^{\ast}\, dx.
\end{equation}
Then the lemma follows  immediately from  \eqref{Sdr}, \eqref{DX} and Lemma \ref{DL5}. 
\end{proof}

The next lemma is crucial in obtaining the strict sub-additivity of the function $I(\gamma, \mu, s)$. For a proof we refer the reader to Lemma 2.10 in \cite{Albert}.
\begin{lemma} \label{Le11}
Let $f$, $g:\mathbb{R}\rightarrow [0,\infty)$ be  functions such that are non-increasing, $C^{\infty}_{c}(\mathbb{R})$ and even. Let $x_{1}$ and $x_{2}$ be numbers such that $f(x+x_{1})$ and $g(x+x_{2})$ have disjoint supports, and define $w(x):=f(x+x_{1})+g(x+x_{2})$. Then  $\partial_{x}w^{*}\in L^{2}(\mathbb{R})$ (the first derivative of $w^{*}$ in the distributional sense)  and one has the estimate
\begin{equation}\label{ESTi}
\|\partial_{x}w^{*}\|^{2}_{L^{2}}\leq \|\partial_{x}w\|^{2}_{L^{2}}-\frac{3}{4}\text{\rm min}\left\{ \|\partial_{x}f\|^{2}_{L^{2}},  \|\partial_{x}g\|^{2}_{L^{2}}\right\}.
\end{equation}
\end{lemma}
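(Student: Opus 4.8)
The plan is to evaluate both members of \eqref{ESTi} exactly through the layer-cake (co-area) representation of $\|\partial_x\,\cdot\,\|_{L^2}^2$, taking advantage of the fact that $f$ and $g$ are already their own symmetric-decreasing rearrangements and that the two translated bumps do not overlap. First I would record the reductions. Reading ``even and non-increasing'' as symmetric-decreasing (non-increasing in $|x|$), one has $f^{*}=f$ and $g^{*}=g$; since translations preserve both $\|\partial_x\,\cdot\,\|_{L^2}$ and the rearrangement, I may work directly with $f$ and $g$. Because $f(\cdot+x_{1})$ and $g(\cdot+x_{2})$ have disjoint supports there are no cross terms, so
\[
\|\partial_x w\|_{L^2}^2=\|\partial_x f\|_{L^2}^2+\|\partial_x g\|_{L^2}^2 .
\]
Writing $\mu_f(t)=m(\{f>t\})$, $\mu_g(t)=m(\{g>t\})$ and $\mu(t)=m(\{w>t\})$ for the distribution functions, disjointness gives $\{w>t\}$ as the disjoint union of the two translated level sets for every $t>0$, whence $\mu=\mu_f+\mu_g$.

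Next I would use the standard identity that, for a symmetric-decreasing $u$, the co-area formula together with Cauchy--Schwarz (an equality because the two points of each regular level set carry the same slope) yields $\|\partial_x u\|_{L^2}^2=\int_0^{\infty}4/|\mu_u'(t)|\,dt$, where $-\mu_u'$ denotes the a.e.-defined density of the distribution function and Sard's theorem guarantees that almost every $t$ is a regular value (so possible flat segments, being critical values, are harmless). Setting $p:=-\mu_f'$, $q:=-\mu_g'$ and $a:=\min\{\max f,\max g\}$, and assuming without loss of generality $\max f\le\max g$ so that $a=\max f$, this produces
\[
\|\partial_x f\|_{L^2}^2=\int_0^{a}\frac{4}{p}\,dt,\qquad
\|\partial_x g\|_{L^2}^2=\int_0^{a}\frac{4}{q}\,dt+\int_a^{\max g}\frac{4}{q}\,dt,
\]
and, since $w^{*}$ has distribution function $\mu=\mu_f+\mu_g$,
\[
\|\partial_x w^{*}\|_{L^2}^2=\int_0^{a}\frac{4}{p+q}\,dt+\int_a^{\max g}\frac{4}{q}\,dt .
\]
The last integral is finite (its integrand is dominated by $4/q$, whose integral is $\|\partial_x g\|_{L^2}^2<\infty$) and $w$ has connected range $[0,\max g]$, so $w^{*}$ has no jumps; together these give $\partial_x w^{*}\in L^2$, the first assertion of the lemma.

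Finally I would subtract. The tail integrals over $(a,\max g)$ cancel, leaving
\[
\|\partial_x f\|_{L^2}^2+\|\partial_x g\|_{L^2}^2-\|\partial_x w^{*}\|_{L^2}^2
=\int_0^{a}4\Big(\frac1p+\frac1q-\frac1{p+q}\Big)\,dt .
\]
The elementary pointwise bound $\tfrac1p+\tfrac1q-\tfrac1{p+q}\ge\max\{\tfrac1p,\tfrac1q\}$, immediate from $\tfrac1{p+q}\le\min\{\tfrac1p,\tfrac1q\}$, then gives
\[
\|\partial_x f\|_{L^2}^2+\|\partial_x g\|_{L^2}^2-\|\partial_x w^{*}\|_{L^2}^2
\ge\int_0^{a}\frac{4}{p}\,dt=\|\partial_x f\|_{L^2}^2\ge\min\{\|\partial_x f\|_{L^2}^2,\|\partial_x g\|_{L^2}^2\},
\]
which is in fact stronger than \eqref{ESTi} (constant $1$ in place of $3/4$); combining this with $\|\partial_x w\|_{L^2}^2=\|\partial_x f\|_{L^2}^2+\|\partial_x g\|_{L^2}^2$ closes the argument.

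The step I expect to be most delicate is the rigorous justification of the identity $\|\partial_x u\|_{L^2}^2=\int_0^\infty 4/|\mu_u'|\,dt$ for symmetric-decreasing $u$ when $f$ or $g$ may contain flat segments: one must argue, via the co-area formula and Sard's theorem, that critical values form a null set in $t$ and that the jumps of $\mu_u$ created by flat pieces contribute nothing to $\|\partial_x u\|_{L^2}^2$, and then confirm that the rearranged profile $w^{*}$ indeed lies in $H^1(\mathbb{R})$ rather than merely in $BV$. Everything else is the bookkeeping of the distribution functions and the one-line convexity estimate above.
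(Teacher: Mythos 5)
Your proof is correct, and it is in substance the proof of the result the paper relies on: the paper gives no argument for Lemma \ref{Le11} at all, deferring to Lemma 2.10 of \cite{Albert}, and that lemma is proved by exactly your scheme --- express the Dirichlet integral of an even, non-increasing-in-$|x|$ profile through its distribution function as $\int_0^{\max u}4/|\mu_u'(t)|\,dt$, use disjointness of supports to get $\mu=\mu_f+\mu_g$ and the splitting of $\|\partial_xw\|_{L^2}^2$, cancel the tails above $\min\{\max f,\max g\}$, and finish with a pointwise superadditivity estimate. The only substantive difference is that final estimate: you use $\tfrac1p+\tfrac1q-\tfrac1{p+q}\ge\max\{\tfrac1p,\tfrac1q\}$ and obtain the conclusion with constant $1$, which is stronger than \eqref{ESTi}; the constant $3/4$ of the lemma corresponds to the weaker route $\tfrac1p+\tfrac1q-\tfrac1{p+q}\ge\tfrac34\bigl(\tfrac1p+\tfrac1q\bigr)$, equivalent to $(p+q)\bigl(\tfrac1p+\tfrac1q\bigr)\ge4$, so your version implies the stated one a fortiori. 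One point in your write-up should be tightened: ``the formal integral is finite and $w^*$ has no jumps'' does not by itself yield $\partial_xw^*\in L^2$, since continuity plus a.e.\ differentiability of a monotone profile does not give absolute continuity. Two clean repairs are available: either invoke the rearrangement inequality \eqref{Sdr} already quoted in the paper from \cite{ELL}, which gives $w^*\in H^1(\mathbb{R})$ outright because $w\in H^1(\mathbb{R})$; or observe that a.e.\ on $(0,\max w)$ one has $|\mu'|\ge 2/\max\{\|\partial_xf\|_{L^\infty},\|\partial_xg\|_{L^\infty}\}$, and since monotone functions satisfy $\mu(t_1)-\mu(t_2)\ge\int_{t_1}^{t_2}|\mu'|\,dt$, the generalized inverse $w^*$ is Lipschitz; with either of these in hand, your change-of-variables identity for $w^*$, and hence the whole argument, is rigorous.
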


\begin{lemma} \label{cvb}
Let $\gamma$, $\mu$ , $s\geq0$.  Then there exists a minimizing sequence $\left\{\vv{u}_{n}\right\}$ of $I(\gamma, \mu, s)$ such that, for every $n$ and $i=1$, $2$, $3$, the functions $u_{i, n}\in H_{\rm rad}^{1}(\mathbb{R})\cap C^{\infty}_{c}(\mathbb{R})$, are non-negative and non-increasing for $x\geq0$. Furthermore, $\|u_{1, n}\|^{2}_{L^{2}}=\gamma$, $\|u_{2, n}\|^{2}_{L^{2}}=\mu$ and $\|u_{3, n}\|^{2}_{L^{2}}=s$ for all $n$.
\end{lemma}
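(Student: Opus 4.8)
The goal is to construct, for given masses $\gamma,\mu,s\geq 0$, a minimizing sequence for $I(\gamma,\mu,s)$ consisting of functions that are simultaneously nonnegative, radially decreasing, smooth, and compactly supported, while \emph{exactly} preserving the three mass constraints. I would build this in three successive ``regularization'' passes, each of which lowers (or does not raise) the energy and, after a mass correction, preserves the constraints.

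\textbf{Step 1: Start from an arbitrary minimizing sequence and pass to moduli.}
Begin with any minimizing sequence $\{\vv{v}_n\}$ for $I(\gamma,\mu,s)$. By Lemma \ref{DL5}, replacing $\vv{v}_n$ by $|\vv{v}_n|$ does not increase the energy and leaves each $L^2$ norm unchanged, so $\{|\vv{v}_n|\}$ is again minimizing with nonnegative components. This already handles nonnegativity.

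\textbf{Step 2: Truncate to compact support and mollify.}
For each fixed $n$ I would multiply each component by a cutoff and approximate by $C_c^\infty$ functions: density of $C^\infty_c(\mathbb{R})$ in $H^1$ gives compactly supported smooth approximants $w_{i,n}$ with $\|w_{i,n}-|v_{i,n}|\|_{H^1}$ as small as desired. The point of passing to compact support \emph{before} rearranging is precisely to meet the hypothesis of Lemma \ref{D456} (and of Lemma \ref{Le11}), which is stated for compactly supported functions. Since $E$ is continuous on $H^1(\mathbb{R};\mathbb{C}^3)$ (the nonlinear and coupling terms are controlled by \eqref{GC1}--\eqref{E1}), I can choose the approximation fine enough that $|E(\vv{w}_n)-E(|\vv{v}_n|)|<1/n$, and the masses $\|w_{i,n}\|^2_{L^2}$ can be made within $1/n$ of $\gamma,\mu,s$ respectively. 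The cutoff can also be arranged to keep each $w_{i,n}$ nonnegative.

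\textbf{Step 3: Symmetric rearrangement, then rescale to fix the masses exactly.}
Now apply the symmetric decreasing rearrangement componentwise, $\vv{w}_n\mapsto |\vv{w}_n|^{*}$. By Lemma \ref{D456}, $E(|\vv{w}_n|^{*})\leq E(\vv{w}_n)$, and rearrangement preserves every $L^p$ norm, so in particular the masses are unchanged by this step; the rearranged functions are nonnegative, even, and nonincreasing for $x\ge 0$, and they remain in $H^1_{\rm rad}(\mathbb{R})\cap C^\infty_c(\mathbb{R})$ (compact support and smoothness of a monotone rearrangement of a nonnegative $C_c^\infty$ function are standard; the support becomes a single centered interval). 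The only remaining defect is that the masses equal $\gamma+O(1/n)$ rather than $\gamma$ exactly. I correct this by multiplying each component by a constant $c_{i,n}\to 1$ so that $\|c_{i,n}u_{i,n}\|^2_{L^2}$ equals $\gamma,\mu,s$ on the nose; scalar multiplication preserves nonnegativity, monotonicity, smoothness, and compact support, and by continuity of $E$ the energy changes by $o(1)$. Setting $u_{i,n}=c_{i,n}|w_{i,n}|^{*}$ yields a sequence with all the stated structural properties and with $E(\vv{u}_n)\to I(\gamma,\mu,s)$, which is exactly a minimizing sequence with the required form.

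\textbf{The main obstacle.}
The delicate point is the bookkeeping in Step 3: I must preserve the mass constraints \emph{exactly} while only controlling the energy up to $o(1)$, and I must verify that each regularizing operation — cutoff, mollification, rearrangement, rescaling — is compatible with the next one's hypotheses (compact support is needed for Lemma \ref{D456}, smoothness and monotonicity must survive the mass-correcting rescaling). The rearrangement itself does all the hard analytic work and is already supplied by Lemma \ref{D456}, so no new inequality is required; the remaining effort is the routine but careful diagonal argument chaining these approximations together with the continuity of $E$.
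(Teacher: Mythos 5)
Your overall strategy (pass to moduli, cut off to compact support, rearrange, rescale to restore the masses) is close in spirit to the paper's, but there is a genuine gap in your Step 3: you mollify \emph{before} rearranging and then assert that the symmetric decreasing rearrangement of a nonnegative $C^\infty_c$ function is again in $C^\infty_c$. That claim is false in general. The rearrangement $f^*$ is essentially the inverse of the distribution function $\mu_f(t)=m(\left\{f>t\right\})$, and critical levels of $f$ produce corners in $f^*$. Concretely, take $f\geq 0$ smooth and compactly supported with two bumps, a tall one and a shorter one whose nondegenerate maximum sits at height $t_0$: for $t$ slightly below $t_0$ the short bump contributes a length of order $\sqrt{t_0-t}$ to $\mu_f(t)$, so $\mu_f'(t)\rightarrow-\infty$ as $t\uparrow t_0$, and consequently $(f^*)'$ jumps from a nonzero value to $0$ at the radius $r_0$ where $f^*(r_0)=t_0$. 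Thus $f^*$ is not even $C^1$ there. Rearrangement preserves $H^1$ (P\'olya--Szeg\H{o}) but destroys smoothness, so the sequence you construct fails the required property $u_{i,n}\in C^{\infty}_{c}(\mathbb{R})$.

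The fix is to reverse the order of your last two operations, which is exactly what the paper does: first approximate by compactly supported functions, then rearrange (Lemma \ref{D456} keeps the sequence minimizing), and only \emph{then} mollify, choosing the mollifier $\varphi_{\epsilon}$ itself nonnegative, even, and nonincreasing for $x\geq 0$. The fact that makes this order work is that the convolution of two nonnegative even functions, each nonincreasing on $[0,\infty)$, is again even and nonincreasing on $[0,\infty)$, while convolution with a $C^{\infty}_{c}$ kernel upgrades a compactly supported $H^1$ function to $C^{\infty}_{c}$. After that, your final mass-correcting rescaling (identical to the paper's) completes the argument. With this reordering your proposal becomes the paper's proof; as written, it does not establish the smoothness assertion of the lemma.
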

\begin{proof}
We denote the convolution product of the functions $f$ and $g$ by $f\star g$. We can assume that $\gamma>0$, $\mu>0$ and $s>0$, as otherwise
just simply take $u_{i, n}$ identically zero on $\mathbb{R}$.  Start with a given minimizing sequence  $\left\{\vv{w}_{n}\right\}$ for $I(\gamma, \mu, s)$. Notice that  we can approximate $\left\{\vv{w}_{n}\right\}$ by functions $\left\{\vv{z}^{}_{n}\right\}$ which have
compact support, and  $I(\gamma, \mu,s )=\lim_{n\rightarrow\infty}E(\vv{z}^{}_{n})$. 
Now from Lemma \ref{D456}, we have that the sequence $\left\{|\vv{z}^{}_{n}|^{\ast}\right\}$ is still a minimizing sequence of $I(\gamma, \mu)$. Hence we may assume without loss of generality $\left\{\vv{w}_{n}\right\}=\left\{|\vv{z}^{}_{n}|^{\ast}\right\}$. 
Next let $\varphi\in C^{\infty}_{c}(\mathbb{R})$ be any non-negative, even, and decreasing function for $x\geq0$ satisfying $\int_{\mathbb{R}}\varphi\,dx=1$. For any arbitrary $\epsilon>0$, consider $\varphi_{\epsilon}(\cdot)=(1/\epsilon)\varphi(\cdot/\epsilon)$, and set
\begin{equation*}
(v_{1,n}, v_{2,n}, v_{3,n}):=(w_{1,n}\star \varphi_{\epsilon_{n}}, w_{2,n}\star \varphi_{\epsilon_{n}}, w_{3,n}\star \varphi_{\epsilon_{n}})
\end{equation*}
with $\epsilon_{n}$ appropriately small for $n$ large. Finally, set
\begin{equation*}
u_{1, n}:=\sqrt{\gamma}\frac{v_{1,n}}{\|v_{1,n}\|^{}_{L^{2}}}, \quad u_{2, n}:=\sqrt{\mu}\frac{v_{2,n}}{\|v_{2,n}\|^{}_{L^{2}}}, \quad
u_{3, n}:=\sqrt{s}\frac{v_{3,n}}{\|v_{3,n}\|^{}_{L^{2}}}.
\end{equation*}
Then it is not hard to show that the sequence $\left\{(u_{1,n}, u_{2,n}, u_{3,n})\right\}$ satisfies the desired properties.
\end{proof}

\begin{lemma} \label{Le1}
Let $\gamma_{1}$, $\gamma_{2}$, $\mu_{1}$, $\mu_{2}$, $s_{1}$,  $s_{2}\in [0, \infty)$, and suppose that $\gamma_{1}+\gamma_{2}>0$, $\mu_{1}+\mu_{2}>0$,  $s_{1}+s_{2}>0$,   $\gamma_{1}+\mu_{1}+s_{1}>0$ and $\gamma_{2}+\mu_{2}+s_{2}>0$. Then the function $I(\gamma, \mu, s)$ enjoys the following property
\begin{equation}\label{EII}
I(\gamma_{1}+\gamma_{2},\mu_{1}+\mu_{2}, s_{1}+s_{2})< I(\gamma_{1}, \mu_{1}, s_{1})+ I(\gamma_{2}, \mu_{2}, s_{2}).
\end{equation}
\end{lemma}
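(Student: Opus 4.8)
The plan is to build, for each $n$, an admissible competitor for $I(\gamma_{1}+\gamma_{2},\mu_{1}+\mu_{2},s_{1}+s_{2})$ by gluing two sub-problem minimizing sequences far apart and then symmetrizing. First I invoke Lemma~\ref{cvb} to choose minimizing sequences $\vv{u}_{n}=(u_{1,n},u_{2,n},u_{3,n})$ for $I(\gamma_{1},\mu_{1},s_{1})$ and $\vv{v}_{n}=(v_{1,n},v_{2,n},v_{3,n})$ for $I(\gamma_{2},\mu_{2},s_{2})$, each made of non-negative, even, non-increasing $C^{\infty}_{c}$ functions with the exact prescribed masses. Choosing $y_{n}$ so large that the supports of $u_{i,n}$ and $v_{i,n}(\cdot-y_{n})$ are disjoint for every $i$, I set $w_{i,n}:=u_{i,n}+v_{i,n}(\cdot-y_{n})$ and $\vv{w}_{n}:=(w_{1,n},w_{2,n},w_{3,n})$. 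Disjointness of supports makes both the masses and the functional additive: $\|w_{i,n}\|^{2}_{L^{2}}$ equals the required sum, and since every mixed product in the trilinear integral vanishes, $E(\vv{w}_{n})=E(\vv{u}_{n})+E(\vv{v}_{n})$.

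Next I symmetrize each component, setting $\vv{w}^{\ast}_{n}:=(w_{1,n}^{\ast},w_{2,n}^{\ast},w_{3,n}^{\ast})$. Symmetrization preserves the $L^{2}$ masses (so $\vv{w}^{\ast}_{n}$ is admissible for the sum problem) and the $L^{p+1}$ norms, can only increase the trilinear integral by \eqref{DX}, and, since each $w_{i,n}$ is exactly a sum of two disjointly supported even decreasing bumps, Lemma~\ref{Le11} gives $\|\partial_{x}w_{i,n}^{\ast}\|^{2}_{L^{2}}\leq\|\partial_{x}w_{i,n}\|^{2}_{L^{2}}-\tfrac{3}{4}\min\{\|\partial_{x}u_{i,n}\|^{2}_{L^{2}},\|\partial_{x}v_{i,n}\|^{2}_{L^{2}}\}$. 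Combining these facts yields the master estimate
\[
I(\gamma_{1}+\gamma_{2},\mu_{1}+\mu_{2},s_{1}+s_{2})\leq E(\vv{w}^{\ast}_{n})\leq E(\vv{u}_{n})+E(\vv{v}_{n})-\Delta_{n},
\]
where
\[
\Delta_{n}:=\tfrac{3}{8}\sum_{i=1}^{3}\min\bigl\{\|\partial_{x}u_{i,n}\|^{2}_{L^{2}},\|\partial_{x}v_{i,n}\|^{2}_{L^{2}}\bigr\}+\alpha\Bigl(\int_{\mathbb{R}}w_{1,n}^{\ast}w_{2,n}^{\ast}w_{3,n}^{\ast}\,dx-\int_{\mathbb{R}}w_{1,n}w_{2,n}w_{3,n}\,dx\Bigr)\geq0.
\]
Since $E(\vv{u}_{n})\to I(\gamma_{1},\mu_{1},s_{1})$ and $E(\vv{v}_{n})\to I(\gamma_{2},\mu_{2},s_{2})$, it suffices to prove $\liminf_{n}\Delta_{n}\geq c$ for some $c>0$.

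To bound $\Delta_{n}$ below I split into two cases according to whether an index is ``shared''. \textbf{Case 1.} There is an index $i_{0}$ whose mass is strictly positive in both sub-problems. Then Lemma~\ref{DL1}(ii)--(iv) yields $\|\partial_{x}u_{i_{0},n}\|_{L^{2}}\geq\delta$ and $\|\partial_{x}v_{i_{0},n}\|_{L^{2}}\geq\delta'$ with $\delta,\delta'>0$ independent of $n$, so the kinetic part of $\Delta_{n}$ is at least $\tfrac{3}{8}\min\{\delta^{2},\delta'^{2}\}>0$. \textbf{Case 2.} No index is shared; then $\{1,2,3\}$ is partitioned into the two non-empty sets of indices on which each sub-problem carries positive mass. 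Consequently neither sub-problem has all three components non-trivial, so its trilinear term vanishes and its energy decouples into a sum of one-component functionals $E_{1}$. By Remark~\ref{AsDf}, each non-trivial (and already even, decreasing) component of $\vv{u}_{n}$ and $\vv{v}_{n}$ converges strongly in $H^{1}_{\mathbb{C}}(\mathbb{R})$ to a centered soliton of the form \eqref{Ax1}; hence each $w_{i,n}^{\ast}$ converges in $H^{1}_{\mathbb{C}}(\mathbb{R})$, and so uniformly on compact sets, to a strictly positive even function, while $\int_{\mathbb{R}}w_{1,n}w_{2,n}w_{3,n}\,dx=0$. Therefore the coupling part of $\Delta_{n}$ converges to $\alpha\int_{\mathbb{R}}\phi_{\omega_{1}}\phi_{\omega_{2}}\phi_{\omega_{3}}\,dx>0$.

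In either case $\liminf_{n}\Delta_{n}\geq c>0$, and passing to the limit in the master estimate gives $I(\gamma_{1}+\gamma_{2},\mu_{1}+\mu_{2},s_{1}+s_{2})\leq I(\gamma_{1},\mu_{1},s_{1})+I(\gamma_{2},\mu_{2},s_{2})-c$, which is \eqref{EII}. I expect the non-shared situation (Case 2) to be the main obstacle: there symmetrization produces no kinetic gain, so strictness must come entirely from the trilinear term, and this in turn requires ruling out that the relevant components spread out in the limit. The decisive point is that in this case the sub-problems decouple, so the explicit compactness of Remark~\ref{AsDf} applies and forces convergence to genuine solitons; the delicate bookkeeping is to track which components survive in each sub-problem and to verify that the two cases are exhaustive.
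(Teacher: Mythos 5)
Your proof is correct, and its skeleton (Lemma~\ref{cvb} sequences glued with disjoint supports, symmetrization via Lemma~\ref{Le11} and \eqref{DX}, kinetic gain from Lemma~\ref{DL1} when an index carries positive mass in both sub-problems) is the same as the paper's. The genuine divergence is in the hard case where no index is shared. The paper (its Cases (iii)--(v)) abandons the glued sequence there entirely: it observes that the two sub-infima decouple into scalar problems whose minima are \emph{attained} by the solitons \eqref{Ax1}, and then tests the sum problem directly with the triple of overlapping solitons $(\phi_{\omega_{1}},\phi_{\omega_{2}},\phi_{\omega_{3}})$, getting strictness immediately from $-\alpha\int\phi_{\omega_{1}}\phi_{\omega_{2}}\phi_{\omega_{3}}\,dx<0$. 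You instead stay with the glued-and-rearranged sequence and extract strictness from the \emph{compactness} statement in Remark~\ref{AsDf}: the components of the sub-sequences are scalar minimizing sequences, converge (up to subsequence) to centered solitons, and hence the coupling term of $\vv{w}^{\ast}_{n}$ is asymptotically positive while that of $\vv{w}_{n}$ vanishes. Both routes work; the paper's is more elementary (it needs only that solitons achieve the scalar minima, not any convergence of minimizing sequences), while yours buys a cleaner, manifestly exhaustive two-case dichotomy (shared index versus partition of $\{1,2,3\}$) in place of the paper's five cases imported from the literature, and a single master estimate covering everything. Your constant $\tfrac{3}{8}$ is also the correct bookkeeping of the $\tfrac12$ in front of the kinetic term, which the paper silently drops (harmlessly). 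Three details you gloss over should be spelled out in a final write-up: (a) in the unshared case, passing from convergence of the decoupled sum $E_{1}(u_{1,n})+E_{1}(u_{2,n})$ to the statement that \emph{each} component is a scalar minimizing sequence (each term is bounded below by its own infimum, so this splitting is immediate); (b) the claim that the limiting solitons are \emph{centered}: Remark~\ref{AsDf} only gives convergence up to translations, and pinning the center at the origin requires the standard argument that for even, decreasing functions the translations must stay bounded (else mass would double) and the even limit forces the translation to vanish; (c) Remark~\ref{AsDf} yields convergence only along a subsequence, so what you actually control in Case 2 is $\limsup_{n}\Delta_{n}>0$ rather than $\liminf_{n}\Delta_{n}\geq c$ --- but since your master estimate holds for every $n$, this is all that is needed.
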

\begin{proof}
Our proof is inspired by the one of Lemma 2.12 in \cite{Albert} (see also \cite{BB2016}).  For $k = 1$, $2$, consider the minimizing sequences $\big\{(u^{(k)}_{1,n}, u^{(k)}_{2,n}, u^{(k)}_{3,n}\big\}$ of $I(\gamma_{k}, \mu_{k}, s_{k})$ as constructed in Lemma \ref{cvb}. For each $n$, let the number $x_{n}$ be such that for each, $1\leq i\leq 3$,  $u^{(1)}_{i, n}$ and $\tilde{u}^{(2)}_{i, n}(x):=u^{(2)}_{i,n}(x+x_{n})$ have disjoint support. Define:
\begin{equation*}
u_{i, n}:=\left(u^{(1)}_{i, n}+\tilde{u}^{(2)}_{i, n}\right)^{*} \quad \text{for every $i = 1$, $2$, $3$.} 
\end{equation*}
Since $\|u_{1,n}\|^{2}_{L^{2}}=\gamma_{1}+\gamma_{2}$, $\|u_{2, n}\|^{2}_{L^{2}}=\mu_{1}+\mu_{2}$  and $\|u_{3, n}\|^{2}_{L^{2}}=s_{1}+s_{2}$ we see that
\begin{equation}\label{FIneq}
I(\gamma_{1}+\gamma_{2}, \mu_{1}+\mu_{2}, s_{1}+s_{2})\leq E(u_{1, n}, u_{2, n}, u_{3, n}). 
\end{equation}
Now, applying the Lemma \ref{Le11} to each component of the sequence $\left\{(u_{1, n}, u_{2, n}, u_{3, n})\right\}$, it follows that
\begin{align}\label{CTh} 
\sum^{3}_{i=1}\|\partial_{x}u_{i, n}\|^{2}_{L^{2}}&\leq \sum^{3}_{i=1}\|\partial_{x}u^{(1)}_{i, n}\|^{2}_{L^{2}}+\sum^{3}_{i=1}\|\partial_{x}\tilde{u}^{(2)}_{i, n}\|^{2}_{L^{2}}-R_{n} 
\end{align}
where 
\begin{equation}\label{FFss}
R_{n}=\frac{3}{4}\sum^{3}_{i=1}\mbox{min}\left\{\|\partial_{x}u^{(1)}_{i, n}\|^{2}_{L^{2}}, \|\partial_{x}u^{(2)}_{i, n}\|^{2}_{L^{2}}\right\}.
\end{equation}
Moreover, from properties of rearrangement, we have
\begin{align*}
 \int_{\mathbb{R}}|u_{i,n}|^{p+1}\, dx &=\int_{\mathbb{R}}|u^{(1)}_{i,n}|^{p+1}\, dx +\int_{\mathbb{R}}|u^{(2)}_{i,n}|^{p+1}\, dx \\
\int_{\mathbb{R}}u_{1,n}u_{2,n}u_{3,n}\, dx  &\geq \int_{\mathbb{R}}u^{(1)}_{1,n}u^{(1)}_{2,n}u^{(1)}_{3,n}\, dx  +\int_{\mathbb{R}}u^{(2)}_{1,n}u^{(2)}_{2,n}u^{(2)}_{3,n}\, dx. 
\end{align*}
Thus, applying the estimate \eqref{CTh} and \eqref{FIneq} we have
\begin{align*}
I(\gamma_{1}+\gamma_{2}, \mu_{1}+\mu_{2}, s_{1}+s_{2})&\leq E(u_{1,n}, u_{2,n}, u_{3,n})\\
&\leq E(u^{(1)}_{1,n}, u^{(1)}_{2,n}, u^{(1)}_{3,n})+E(u^{(2)}_{1,n}, u^{(2)}_{2,n}, u^{(2)}_{3,n})-R_{n},
\end{align*}
hence, by taking  limit superior as $n$ goes to $+\infty$, we get
\begin{equation}\label{LSI}
I(\gamma_{1}+\gamma_{2}, \mu_{1}+\mu_{2},  s_{1}+s_{2})\leq I(\gamma_{1}, \mu_{1}, s_{1})+I(\gamma_{2},\mu_{2}, s_{2})-\liminf_{n\rightarrow\infty} R_{n}.
\end{equation}
Next we prove the strict inequality \eqref{EII}. As was observed in \cite[Lemma 2.7]{BB2016} (see also \cite{LNW}), it is sufficient to consider the following five cases: (i)  $\gamma_{1}$, $\gamma_{2}>0$ and $\mu_{1}$, $\mu_{2}$, $s_{1}$, $s_{2}\geq 0$;  (ii) $\gamma_{1}=0$, $\gamma_{2}>0$, $\mu_{2}>0$ and $s_{1}=0$; (iii) $\gamma_{1}=0$, $\gamma_{2}>0$, $\mu_{2}>0$ and $s_{1}>0$; (iv) $\gamma_{1}=0$, $\gamma_{2}>0$, $\mu_{2}=0$ and $s_{1}=0$; and (v) $\gamma_{1}=0$, $\gamma_{2}>0$, $\mu_{2}=0$ and $s_{1}>0$.
 
\textbf{Case (i)}. When $\gamma_{1}>0$ and $\gamma_{2}>0$, Lemma \ref{DL1}  guarantees that there exist a
pair of positive numbers $\kappa_{1}$ and $\kappa_{2}$ such that for a sufficiently large $n$, $\|\partial_{x}u^{(1)}_{1, n}\|^{2}_{L^{2}}\geq\kappa_{1}$ and $\|\partial_{x}u^{(2)}_{1, n}\|^{2}_{L^{2}}\geq\kappa_{2}$.
Let $\kappa = \mbox{min}(\kappa_{1}, \kappa_{2}) > 0$;  then  \eqref{FFss},  implies  that $R_{n} \geq 3\kappa/4$ for all $n$ large, and in view of \eqref{LSI}, we get
\begin{align*}
I(\gamma_{1}+\gamma_{2}, \mu_{1}+\mu_{2}, s_{1}+s_{2})&\leq I(\gamma_{1}, \mu_{1}, s_{1})+I(\gamma_{2},\mu_{2}, s_{2})-\frac{3}{4}\kappa\\
&<I(\gamma_{1}, \mu_{1}, s_{1})+I(\gamma_{2},\mu_{2}, s_{2}).
\end{align*}

\textbf{Case (ii)}. We have $\gamma_{1}=0$, $\gamma_{2}>0$, $\mu_{2}>0$ and $s_{1}=0$.  Since $\gamma_{1}+\mu_{1}+s_{1}>0$, so  $\mu_{1}>0$ as well. Once again, applying Lemma \ref{DL1} we have there exist  $\kappa_{3}>0$ and $\kappa_{4}>0$ such that for a sufficiently large $n$,   $\|\partial_{x}u^{(1)}_{2, n}\|^{2}_{L^{2}}\geq\kappa_{3}$ and $\|\partial_{x}u^{(2)}_{2, n}\|^{2}_{L^{2}}\geq\kappa_{4}$. Let $\kappa^{\ast} = \mbox{min}(\kappa_{3}, \kappa_{4}) > 0$; from \eqref{FFss}, it follows  that $R_{n} \geq 3\kappa^{\ast}/4$. Thus, by \eqref{LSI}, we obtain 
\begin{align*}
I(\gamma_{1}+\gamma_{2}, \mu_{1}+\mu_{2}, s_{1}+s_{2})&\leq I(\gamma_{1}, \mu_{1}, s_{1})+I(\gamma_{2},\mu_{2}, s_{2})-\frac{3}{4}\kappa^{\ast}\\ &<I(\gamma_{1}, \mu_{1}, s_{1})+I(\gamma_{2},\mu_{2}, s_{2}).
\end{align*}

\textbf{Case (iii)}. In this case  $\gamma_{1}=0$, $\gamma_{2}>0$, $\mu_{2}>0$ and $s_{1}>0$.   If $\mu_{1}>0$ or $s_{2}>0$, then the proofs  follow the same lines as that in the Case (ii) above. Thus, we can suppose that $\mu_{1}=0$ and $s_{2}=0$; that is, we have to prove that
\begin{align}\label{Case3}
I(\gamma_{2}, \mu_{2}, s_{1})&< I(0, 0, s_{1})+I(\gamma_{2},\mu_{2},0).
\end{align}
Now, it is clear that (see Remark \ref{AsDf})
\begin{align*}
I(0, 0, s_{1})&=\inf\left\{E_{1}(u): {u}\in H_{\mathbb{C}}^{1}(\mathbb{R}),\,\,\|u\|^{2}_{L^{2}}=s_{1}\right\},\\
I(\gamma_{2},\mu_{2},0)&=\inf\left\{E_{1}(u)+E_{1}(v): {u}, v\in H_{\mathbb{C}}^{1}(\mathbb{R}),\,\,\|u\|^{2}_{L^{2}}=\gamma_{2},\,\,\,\|v\|^{2}_{L^{2}}=\mu_{2}  \right\}.
\end{align*}
Let us consider $\omega_{1}$, $\omega_{2}$, $\omega_{3}>0$ such that $\|\phi_{\omega_{1}}\|^{2}_{L^{2}}=\gamma_{2}$,  $\|\phi_{\omega_{2}}\|^{2}_{L^{2}}=\mu_{2}$ and  $\|\phi_{\omega_{3}}\|^{2}_{L^{2}}=s_{1}$. Then, we have
\begin{align*}
I(\gamma_{2}, \mu_{2}, s_{1})&\leq E_{1}(\phi_{\omega_{1}})+E_{1}(\phi_{\omega_{2}})+ E_{1}(\phi_{\omega_{3}})-\alpha\int_{\mathbb{R}}\phi_{\omega_{1}}\phi_{\omega_{2}}\phi_{\omega_{3}}\, dx \\
&< E_{1}(\phi_{\omega_{1}})+E_{1}(\phi_{\omega_{2}})+ E_{1}(\phi_{\omega_{3}})\\
&= I(\gamma_{2},\mu_{2},0)+ I(0, 0, s_{1}).
\end{align*}
This proves the Case  (iii).

\textbf{Case (iv).} In this case  we have to prove that
\begin{align*}
I(\gamma_{2}, \mu_{1}, s_{2})&< I(0, \mu_{1}, 0)+I(\gamma_{2}, 0, s_{2}),
\end{align*}
which can be proved using similar argument as in Case (iii).

\textbf{Case (v)}. We have $\gamma_{1}=0$, $\gamma_{2}>0$, $\mu_{2}=0$ and $s_{1}>0$. If $s_{2}>0$, then the proof follows the same argument as that in the Case (ii) above. Thus, we may assume that $s_{2}=0$, and prove that
\begin{align*}
I(\gamma_{2}, \mu_{1}, s_{1})&< I(0, \mu_{1}, s_{1})+I(\gamma_{2}, 0, 0).
\end{align*}
But then the proof is very similar to the proof of \eqref{Case3}. This completes the proof of \eqref{EII} in all cases.
\end{proof}

\begin{lemma} \label{LF3}
Let $\lambda$ be as defined in \eqref{DL}. Suppose $\gamma$, $\mu$ $s>0$ and let $\left\{\vv{u}_{n}\right\}$ be any minimizing sequence for
$I(\gamma_{}, \mu_{}, s)$.  Then there exist $\gamma_{1}\in [0, \gamma]$,  $\mu_{1}\in [0, \mu]$ and $s_{1}\in [0, s]$ such that $\lambda=\gamma_{1}+\mu_{1}+s_{1}$ and 
\begin{equation}\label{INES}
I_{}(\gamma_{1}, \mu_{1}, s_{1})+I_{}(\gamma-\gamma_{1}, \mu-\mu_{1}, s-s_{1})\leq I_{}(\gamma, \mu, s).
\end{equation}
\end{lemma}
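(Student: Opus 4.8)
The plan is to turn the dichotomy hypothesis $0<\lambda<\gamma+\mu+s$ into a spatial decomposition of each minimizing sequence into a localized ``core'' and a ``tail'' carrying the complementary mass, and then to show that the energy of $\vv{u}_{n}$ essentially splits as the sum of the energies of the two pieces. Once this is done, the definition of $I$ applied to each piece, together with the continuity of $(\gamma,\mu,s)\mapsto I(\gamma,\mu,s)$, yields \eqref{INES}.

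First I would make the dichotomy quantitative: writing $\rho_{n}=|u_{1,n}|^{2}+|u_{2,n}|^{2}+|u_{3,n}|^{2}$, Lions' concentration–compactness lemma provides, for each small $\epsilon>0$, a translation $y_{n}$ and radii $R<R'$ with core mass $\int_{|x-y_{n}|\le R}\rho_{n}\ge\lambda-\epsilon$ and annular mass $\int_{R\le|x-y_{n}|\le R'}\rho_{n}\le\epsilon$. A diagonal argument over $\epsilon\to0$ produces, along a subsequence, a single sequence $y_{n}$, transition annuli $A_{n}=\{R_{n}\le|x-y_{n}|\le R_{n}'\}$ with $R_{n}'-R_{n}\to\infty$ and $\int_{A_{n}}\rho_{n}\to0$, and smooth real cutoffs $\xi_{n}$ equal to $1$ on $|x-y_{n}|\le R_{n}$, equal to $0$ for $|x-y_{n}|\ge R_{n}'$, with $0\le\xi_{n}\le1$ and $|\xi_{n}'|\le C/(R_{n}'-R_{n})$. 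I set $v_{i,n}=\xi_{n}u_{i,n}$ and $w_{i,n}=(1-\xi_{n})u_{i,n}$, so $u_{i,n}=v_{i,n}+w_{i,n}$, and write $\vv{v}_{n}=(v_{1,n},v_{2,n},v_{3,n})$, $\vv{w}_{n}=(w_{1,n},w_{2,n},w_{3,n})$. Since the three core masses $\|v_{1,n}\|_{L^{2}}^{2}$, $\|v_{2,n}\|_{L^{2}}^{2}$, $\|v_{3,n}\|_{L^{2}}^{2}$ lie in $[0,\gamma]$, $[0,\mu]$, $[0,s]$, a further subsequence makes them converge to numbers $\gamma_{1}\in[0,\gamma]$, $\mu_{1}\in[0,\mu]$, $s_{1}\in[0,s]$; the annular bound forces $\gamma_{1}+\mu_{1}+s_{1}=\lambda$ and the tail masses converge to $\gamma-\gamma_{1}$, $\mu-\mu_{1}$, $s-s_{1}$.

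The heart of the argument is the energy splitting $E(\vv{u}_{n})\ge E(\vv{v}_{n})+E(\vv{w}_{n})-o(1)$. For the kinetic part, expanding $\partial_{x}(\xi_{n}u_{i,n})$ and $\partial_{x}((1-\xi_{n})u_{i,n})$ and using $\xi_{n}^{2}+(1-\xi_{n})^{2}\le1$ gives $\|\partial_{x}v_{i,n}\|_{L^{2}}^{2}+\|\partial_{x}w_{i,n}\|_{L^{2}}^{2}\le\|\partial_{x}u_{i,n}\|_{L^{2}}^{2}+\text{error}$, where the error collects the terms carrying a factor $\xi_{n}'$; these are supported on $A_{n}$ and bounded by $C(R_{n}'-R_{n})^{-1}$ times the uniform $H^{1}$-bound of Lemma \ref{DL1}(i) and the annular mass, hence $o(1)$. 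The $L^{p+1}$ terms split up to $\int_{A_{n}}|u_{i,n}|^{p+1}$, which is $o(1)$ by interpolation (small $L^{2}$-mass on $A_{n}$ against the bounded $H^{1}$-norm, via the Gagliardo–Nirenberg inequality \eqref{GC1} and $1<p<5$). Finally the coupling term equals $\Re\int v_{1,n}v_{2,n}\overline{v_{3,n}}+\Re\int w_{1,n}w_{2,n}\overline{w_{3,n}}$ plus six mixed products; each mixed product carries at least one $v$-factor and one $w$-factor, hence is supported on $A_{n}$ and controlled by $\prod_{i}\|u_{i,n}\|_{L^{3}(A_{n})}$, which tends to $0$ by the same interpolation. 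Collecting these estimates gives the claimed energy inequality.

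To conclude, each piece is an admissible competitor for its own mass constraints, so $E(\vv{v}_{n})\ge I(\|v_{1,n}\|_{L^{2}}^{2},\|v_{2,n}\|_{L^{2}}^{2},\|v_{3,n}\|_{L^{2}}^{2})$ and likewise for $\vv{w}_{n}$. Letting $n\to\infty$ and using that $I$ is continuous in its arguments — which follows from the scaling $u\mapsto c\,u$ together with the uniform gradient bound of Lemma \ref{DL1}(i) — we obtain
\[
I(\gamma,\mu,s)=\lim_{n}E(\vv{u}_{n})\ge I(\gamma_{1},\mu_{1},s_{1})+I(\gamma-\gamma_{1},\mu-\mu_{1},s-s_{1}),
\]
which is \eqref{INES}. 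The main obstacle is the energy-splitting step, and within it the trilinear coupling term: unlike the quadratic and $L^{p+1}$ terms it is genuinely nonlocal across the three components, so one must verify carefully that, after the single-cutoff decomposition, all six mixed products live on the low-mass annulus $A_{n}$ and therefore vanish in the limit. The secondary point requiring care is the bookkeeping needed to extract a single triple $(\gamma_{1},\mu_{1},s_{1})$ consistent across the $\epsilon\to0$ regularizations.
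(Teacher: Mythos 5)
Your construction is essentially the paper's own strategy: split the sequence with cutoffs, show the energy almost splits, then compare each piece with $I$ at its limiting masses. Two remarks on the parts that are fine. First, your expanding-annulus/diagonal extraction is actually cleaner than the paper's version on one point: you obtain a single triple with $\gamma_{1}+\mu_{1}+s_{1}=\lambda$ exactly and all errors $o(1)$, whereas the paper works at a fixed radius $r$ with errors $C\epsilon$ and triples that a priori depend on $\epsilon$. Second, your treatment of the kinetic, $L^{p+1}$ and trilinear terms is correct: with $\xi_{n}^{2}+(1-\xi_{n})^{2}\le 1$, the cutoff errors carry a factor $\xi_{n}'$ or live on $A_{n}$, and the six mixed trilinear products are indeed supported on $A_{n}$ and vanish by interpolating the small annular $L^{2}$ mass against the uniform $H^{1}$ bound. (Minor point: the lemma does not assume dichotomy, and your construction never uses $0<\lambda<\gamma+\mu+s$, so the opening framing is harmless.)

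The gap is in the final step. You pass from $E(\vv{v}_{n})\ge I\bigl(\|v_{1,n}\|^{2}_{L^{2}},\|v_{2,n}\|^{2}_{L^{2}},\|v_{3,n}\|^{2}_{L^{2}}\bigr)$ to $I(\gamma_{1},\mu_{1},s_{1})$ by asserting that $I$ is continuous ``by the scaling $u\mapsto cu$ and the uniform gradient bound''. That scaling argument proves continuity only at points where all three arguments are positive: nothing prevents, say, $\gamma_{1}=0$ (all of the first component's mass escaping to the tail), and at such a boundary point the normalizing factor $\sqrt{\gamma_{1}}/\|v_{1,n}\|_{L^{2}}$ degenerates, so one cannot map competitors for the masses $\bigl(\|v_{1,n}\|^{2}_{L^{2}},\dots\bigr)$ onto the constraint set for $(0,\mu_{1},s_{1})$ by scaling. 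What you actually need there is the lower-semicontinuity inequality $\liminf_{n} I\bigl(\|v_{1,n}\|^{2}_{L^{2}},\|v_{2,n}\|^{2}_{L^{2}},\|v_{3,n}\|^{2}_{L^{2}}\bigr)\ge I(0,\mu_{1},s_{1})$, and its proof is not pure scaling: one must show that a component whose $L^{2}$ mass vanishes while its $H^{1}$ norm stays bounded contributes nonnegatively to the energy in the limit, because its $L^{p+1}$ term and the trilinear coupling vanish by Gagliardo--Nirenberg; the component can then be discarded and the remaining ones rescaled and compared with the two-component (or one-component) infimum, as in Remark \ref{AsDf}. This is exactly the case analysis (via $E_{1}$) that occupies the second half of the paper's proof of this lemma, and your proposal needs it too, either packaged as a separate continuity-of-$I$ lemma valid up to the boundary of $[0,\infty)^{3}$, or inline as the paper does. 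With that supplied, your argument is complete.
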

\begin{proof} To show this we will follow the arguments in \cite{Albert} and \cite{BB2016}. Let $\sigma\in C^{\infty}_{0}[-2,2]$ be  such that $\sigma\equiv1$ on $[-1,1]$, and let $\rho\in C^{\infty}(\mathbb{R})$ be such that $\sigma^{2}+\rho^{2}=1$ on $\mathbb{R}$. Set, for  $r>0$, the rescaling: $\sigma_{r}(x)=\sigma(x/r)$ and  $\rho_{r}(x)=\rho(x/r)$. Given $\epsilon$ an arbitrary positive number, for all sufficiently large $r$ we have  $\lambda-\epsilon<M(r)\leq M(2r)\leq\lambda$. By taking $r$ larger if necessary we may assume that $1/r<\epsilon$. Thus, by definition of $M$, we may choose $N\in \mathbb{N}$ large enough so that
\begin{equation*}
\lambda-\epsilon<M_{n}(r)\leq M_{n}(2r)\leq \lambda+\epsilon,
\end{equation*}
for all $n\geq N$. Consequently, for each $n\geq N$, we can find $y_{n}$ such that
\begin{equation}\label{I1}
\int^{y_{n}+r}_{y_{n}-r}\sum^{3}_{i=1}|u_{i, n}|^{2}\,dx>\lambda-\epsilon\quad \text{and}\quad\int^{y_{n}+2r}_{y_{n}-2r}\sum^{3}_{i=1}|u_{i, n}|^{2}\,dx<\lambda+\epsilon.
\end{equation}
Next we define the sequences
\begin{align*}
&(l_{1, n}(x), l_{2, n}(x), l_{3, n}(x)):=(\sigma_{r}(x-y_{n})u_{1, n}(x), \sigma_{r}(x-y_{n})u_{2, n}(x), \sigma_{r}(x-y_{n})u_{3, n}(x)),\\
&(k_{1, n}(x), k_{2, n}(x), k_{3, n}(x)):=(\rho_{r}(x-y_{n})u_{1, n}(x), \rho_{r}(x-y_{n})u_{2, n}(x), \rho_{r}(x-y_{n})u_{3, n}(x)).
\end{align*}
From Lemma \ref{L1}, it follows that the sequences $\left\{l_{i, n}\right\}$ and $\left\{k_{i, n}\right\}$ are bounded in $L^{2}(\mathbb{R})$. 
Thus, possibly for a subsequence only, we see that $\|l_{1, n}\|^{2}_{L^{2}}\rightarrow \gamma_{1}$,  $\|l_{2, n}\|^{2}_{L^{2}}\rightarrow \mu_{1}$  and $\|l_{3, n}\|^{2}_{L^{2}}\rightarrow s_{1}$, where $\gamma_{1}\in [0, \gamma]$, $\mu_{1}\in [0, \mu]$ and $s_{1}\in [0, s]$. Furthermore, we also have  $\|k_{1, n}\|^{2}_{L^{2}}\rightarrow \gamma-\gamma_{1}$,  $\|k_{2, n}\|^{2}_{L^{2}}\rightarrow \mu-\mu_{1}$  and $\|k_{3, n}\|^{2}_{L^{2}}\rightarrow s-s_{1}$, and 
\begin{equation}\label{vbm}
\gamma_{1}+\mu_{1}+s_{1}=\lim_{n\rightarrow\infty}\int_{\mathbb{R}}\sum^{3}_{i=1}|u_{i, n}|^{2}\,dx=\lim_{n\rightarrow\infty}\int_{\mathbb{R}}\sum^{3}_{i=1}\sigma^{2}_{r}|u_{i, n}|^{2}\,dx.
\end{equation}
For ease of notation, here and hereafter we write $\sigma_{r}$  instead of $\sigma_{r}(x-y_{n})$, and $\rho_{r}$  instead of $\rho_{r}(x-y_{n})$. From \eqref{I1} and \eqref{vbm}, we obtain 
\begin{equation*}
|(\gamma_{1}+\mu_{1}+s_{1})-\lambda|<\epsilon.
\end{equation*}
We claim now that, for every $n$,
\begin{equation}\label{EStm}
E(l_{1, n}, l_{2, n}, l_{3, n})+E(k_{1, n}, k_{2, n}, k_{3, n})\leq E(u_{1, n}, u_{2, n}, u_{3, n})+C\epsilon.
\end{equation}
Indeed, first notice that
\begin{align*}
&E(l_{1, n}, l_{2, n}, l_{3, n})=\int_{\mathbb{R}}\sigma^{2}_{r}\left\{\sum^{3}_{i=1}\left(\frac{1}{2}|\partial_{x}u_{i, n}|^{2}-\frac{1}{p+1}|u_{i, n}|^{p+1}\right)-\alpha\Re(u_{1, n}u_{2, n}\overline{u_{3, n}})\right\}\,dx\\
&+ \int_{\mathbb{R}}\frac{(\sigma^{2}_{r}-\sigma^{p+1}_{r})}{p+1}\sum^{3}_{i=1}|u_{i, n}|^{p+1}\,dx-\alpha\Re\int_{\mathbb{R}}(\sigma^{2}_{r}-\sigma^{3}_{r})u_{1, n}u_{2, n}\overline{u_{3, n}}\,dx\\
&+\int_{\mathbb{R}}\sum^{3}_{i=1}(\partial_{x}\sigma_{r})^{2}|u_{i, n}|^{2}\,dx+2\int_{\mathbb{R}}\sum^{3}_{i=1}\sigma_{r}\partial_{x}\sigma_{r}
\Re(u_{i, n}\overline{\partial_{x}u_{i, n}})\,dx\\
&\leq\int_{\mathbb{R}}\sigma^{2}_{r}\left\{\sum^{3}_{i=1}\left(\frac{1}{2}|\partial_{x}u_{i, n}|^{2}-\frac{1}{p+1}|u_{i, n}|^{p+1}\right)-\alpha\Re(u_{1, n}u_{2, n}\overline{u_{3, n}})\right\}\,dx+C\epsilon,
\end{align*}
because of the following inequalities:\\
(i) $\|\partial_{x}\sigma_{r}\|_{L^{\infty}}=\|\partial_{x}\sigma_{}\|_{L^{\infty}}/r\leq C\epsilon$;\\
(ii) Using \eqref{I1}, one can see that
\begin{equation*}
\left|\int_{\mathbb{R}}\frac{(\sigma^{2}_{r}-\sigma^{p+1}_{r})}{p+1}\sum^{3}_{i=1}|u_{i, n}|^{p+1}\,dx\right|\leq C \sum^{3}_{i=1}\int_{r\leq|x-y_{n}|\leq 2r}|u_{i, n}|^{2}\leq C\epsilon;
\end{equation*}
(iii) Again using \eqref{I1}, we have
\begin{equation*}
\left|\alpha\Re\int_{\mathbb{R}}(\sigma^{2}_{r}-\sigma^{3}_{r})u_{1, n}u_{2, n}\overline{u_{3, n}}\,dx\right|\leq C \|u_{3, n}\|_{L^{\infty}} \sum^{2}_{i=1}\int_{r\leq|x-y_{n}|\leq 2r}|u_{i, n}|^{2}\leq C\epsilon.
\end{equation*}
Similarly, we get
\begin{align*}
E(k_{1, n}, k_{2, n}, k_{3, n})&\leq\int_{\mathbb{R}}\rho^{2}_{r}\left\{\sum^{3}_{i=1}\left(\frac{1}{2}|\partial_{x}u_{i, n}|^{2}-\frac{1}{p+1}|u_{i, n}|^{p+1}\right)\right\}\, dx\\
&-\alpha\Re\int_{\mathbb{R}}\rho^{2}_{r}(u_{1, n}u_{2, n}\overline{u_{3, n}})\,dx+C\epsilon.
\end{align*}
Since $\sigma_{r}^{2}+\rho_{r}^{2}\equiv 1$ on $\mathbb{R}$, we have \eqref{EStm}. 

Next, suppose  $\gamma_{1}$, $\mu_{1}$, $s_{1}$, $\gamma_{}-\gamma_{1}$, $\mu_{}-\mu_{1}$ and $s_{}-s_{1}$ are all positive. We set $a_{n, 1}=\sqrt{\gamma_{1}}/\|l_{1, n}\|^{}_{L^{2}}$, $a_{n, 2}=\sqrt{\mu_{1}}/\|l_{2, n}\|^{}_{L^{2}}$ and $a_{n, 3}=\sqrt{s_{1}}/\|l_{3, n}\|^{}_{L^{2}}$; and $b_{n, 1}=\sqrt{\gamma-\gamma_{1}}/\|k_{1, n}\|^{}_{L^{2}}$, $b_{n, 2}=\sqrt{\mu-\mu_{1}}/\|k_{2, n}\|^{}_{L^{2}}$ and $b_{n, 3}=\sqrt{s-s_{1}}/\|k_{3, n}\|^{}_{L^{2}}$. Then we see that  $\|a_{1, n}l_{1, n}\|^{2}_{L^{2}}=\gamma_{1}$, $\|a_{2, n}l_{2, n}\|^{2}_{L^{2}}=\mu_{1}$, $\|a_{3, n}l_{3, n}\|^{2}_{L^{2}}=s_{1}$, $\|b_{1, n}k_{1, n}\|^{2}_{L^{2}}=\gamma-\gamma_{1}$, $\|b_{2, n}k_{2, n}\|^{2}_{L^{2}}=\mu-\mu_{1}$, and  $\|b_{3, n}k_{3, n}\|^{2}_{L^{2}}=s-s_{1}$. As all the scaling factors tend to $1$ as $n$ goes to $+\infty$, we obtain
\begin{align*}
\liminf_{n\rightarrow\infty}\left\{E(l_{1, n}, l_{2, n}, l_{3, n})+E(k_{1, n}, k_{2, n}, k_{3, n})\right\}\geq I_{}(\gamma_{1}, \mu_{1}, s_{1})+I(\gamma-\gamma_{1}, \mu-\mu_{1}, s-s_{1}).
\end{align*}
Now if $\gamma_{1}=0$, $\mu_{1}>0$ and $s_{1}>0$, then $\|l_{1, n}\|^{2}_{L^{2}}\rightarrow 0$; this imples
\begin{align*}
\Re\int_{\mathbb{R}}l_{1, n}l_{2, n}\overline{l_{3, n}}\, dx\rightarrow 0, \,\,\, \text{and}\,\,\, \int_{\mathbb{R}}|l_{1, n}|^{p+1}\, dx\rightarrow 0,
\end{align*}
as $n$ goes to $+\infty$. Therefore (see Remark \ref{AsDf}),
\begin{align*}
\lim_{n\rightarrow\infty}E(l_{1, n}, l_{2, n}, l_{3, n})&=\lim_{n\rightarrow\infty}\left\{E_{1}(l_{1, n})+E_{1}(l_{2, n})+E_{1}(l_{3, n})-\alpha\int_{\mathbb{R}}l_{1, n}l_{2, n}l_{3, n}\, dx \right\}\\
&\geq\liminf_{n\rightarrow\infty}\left\{E_{1}(l_{2, n})+E_{1}(l_{3, n})\right\}\geq I_{}(0, \mu_{1}, s_{1}).
\end{align*}
Similar estimates hold if $\mu_{1}$ or $s_{1}$ are zero. Moreover, if $\gamma_{1}=0$, $\mu_{1}=0$ and $s_{1}>0$, then it easily follows that
$\lim_{n\rightarrow\infty}E(l_{1, n}, l_{2, n}, l_{3, n})\geq I_{}(0, 0, s_{1})$. Similar estimates hold with $\gamma-\gamma_{1}$ $\mu-\mu_{1}$, and $s-s_{1}$ playing the roles of $\gamma_{1}$ $\mu_{1}$, and $s_{1}$, respectively. Therefore, in all the cases we have that the
limit inferior as $n$ goes to $+\infty$ of the left hand side of \eqref{EStm}$\geq  I_{}(\gamma_{1}, \mu_{1}, s_{1})+I_{}(\gamma-\gamma_{1}, \mu-\mu_{1}, s-s_{1})$. Finally, we  take the limit inferior of the left-hand side of \eqref{EStm} as $n$ goes to $+\infty$,  and the limit of the right-hand side of \eqref{EStm} to obtain
\begin{equation*}
I_{}(\gamma_{1}, \mu_{1}, s_{1})+I_{}(\gamma-\gamma_{1}, \mu-\mu_{1}, s-s_{1})\leq I_{}(\gamma,\mu, s)+C\epsilon;
\end{equation*}
that is, $I_{}(\gamma_{1}, \mu_{1}, s_{1})+I_{}(\gamma-\gamma_{1}, \mu-\mu_{1}, s-s_{1})\leq I_{}(\gamma, \mu, s)$, as $\epsilon$ is arbitrary.
This completes the proof of the lemma.
\end{proof}

The following result rules out the possibility of dichotomy of minimizing sequences. 
\begin{lemma} \label{Ctwo}
Let $\gamma>0$, $\mu>0$ and $s>0$. Then for every minimizing  sequence of $I(\gamma, \mu, s)$, we have  $\lambda\notin(0,\gamma+\mu+s)$; that is, the case of dichotomy cannot occur.
\end{lemma}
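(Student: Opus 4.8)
The plan is to argue by contradiction, combining the splitting estimate of Lemma \ref{LF3} with the strict subadditivity of Lemma \ref{Le1}; the two lemmas have been designed precisely to fit together here. Suppose, contrary to the claim, that dichotomy occurs, i.e.\ $0<\lambda<\gamma+\mu+s$. First I would invoke Lemma \ref{LF3} to produce numbers $\gamma_{1}\in[0,\gamma]$, $\mu_{1}\in[0,\mu]$ and $s_{1}\in[0,s]$ with $\lambda=\gamma_{1}+\mu_{1}+s_{1}$ and
\[
I(\gamma_{1},\mu_{1},s_{1})+I(\gamma-\gamma_{1},\mu-\mu_{1},s-s_{1})\leq I(\gamma,\mu,s).
\]

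Next I would verify that the two triples $(\gamma_{1},\mu_{1},s_{1})$ and $(\gamma-\gamma_{1},\mu-\mu_{1},s-s_{1})$ satisfy the hypotheses required by Lemma \ref{Le1}. Writing $\gamma_{2}=\gamma-\gamma_{1}$, $\mu_{2}=\mu-\mu_{1}$ and $s_{2}=s-s_{1}$, the three conditions $\gamma_{1}+\gamma_{2}=\gamma>0$, $\mu_{1}+\mu_{2}=\mu>0$ and $s_{1}+s_{2}=s>0$ hold simply because $\gamma$, $\mu$, $s$ are all strictly positive by assumption. The remaining two conditions are $\gamma_{1}+\mu_{1}+s_{1}=\lambda>0$ and $\gamma_{2}+\mu_{2}+s_{2}=(\gamma+\mu+s)-\lambda>0$, and these are exactly the two strict inequalities that define the dichotomy regime $0<\lambda<\gamma+\mu+s$. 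Hence every hypothesis of Lemma \ref{Le1} is in force.

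Applying Lemma \ref{Le1} then yields the strict inequality
\[
I(\gamma,\mu,s)=I(\gamma_{1}+\gamma_{2},\mu_{1}+\mu_{2},s_{1}+s_{2})<I(\gamma_{1},\mu_{1},s_{1})+I(\gamma_{2},\mu_{2},s_{2}),
\]
which directly contradicts the inequality furnished by Lemma \ref{LF3}. This contradiction shows that $\lambda\notin(0,\gamma+\mu+s)$, completing the proof. I do not expect any genuine obstacle here, since all of the analytic difficulty has already been absorbed into Lemmas \ref{LF3} and \ref{Le1}; the only point deserving care is the elementary bookkeeping that checks the positivity hypotheses of Lemma \ref{Le1}, and these turn out to coincide exactly with the standing assumption $\gamma,\mu,s>0$ together with the defining inequalities of the dichotomy case.
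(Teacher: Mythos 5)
Your proposal is correct and follows essentially the same argument as the paper: assume dichotomy, apply Lemma \ref{LF3} to obtain the splitting with the non-strict subadditivity inequality, check the positivity hypotheses of Lemma \ref{Le1} (which hold exactly because $\gamma,\mu,s>0$ and $0<\lambda<\gamma+\mu+s$), and derive the contradiction from the strict subadditivity. Nothing is missing.
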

\begin{proof}
We prove this by contradiction. Suppose that dichotomy happens; that is, $0<\lambda<\gamma+\mu+s$. Let $\gamma_{1}$, $\mu_{1}$ and $s_{1}$ be defined as in Lemma \ref{LF3}, and set $\gamma_{2}=\gamma-\gamma_{1}$, $\mu_{2}=\mu-\mu_{1}$ and $s_{2}=s-s_{1}$. It follows that $\gamma_{2}+\mu_{2}+s_{2}=(\gamma+\mu+s)-\lambda>0$ and $\gamma_{1}+\mu_{1}+s_{1}=\lambda>0$. Since $\gamma_{1}+\gamma_{2}=\gamma>0$, $\mu_{1}+\mu_{2}=\mu>0$ and $s_{1}+s_{2}=s>0$, it follows from Lemma \ref{Le1},
\begin{equation*}
I(\gamma_{1}+\gamma_{2},\mu_{1}+\mu_{2}, s_{1}+s_{2})< I(\gamma_{1}, \mu_{1}, s_{1})+ I(\gamma_{2}, \mu_{2}, s_{2}),
\end{equation*}
which is a contradiction with inequality \eqref{INES}. This completes the proof.
\end{proof}

By Lemmas \ref{L3} and \ref{Ctwo} we see that $\lambda=\gamma+\mu+s$; that is, every minimizing sequence for $I(\gamma, \mu, s)$ must be compact.  

\begin{lemma}\label{L20}
Suppose $\gamma$, $\mu>0$ and $s>0$. Let $\left\{\vv{u}_{n}\right\}$ be any minimizing sequence for $I(\gamma, \mu, s)$. Then there exists a sequence of real numbers $\left\{y_{n}\right\}$ such that the sequence $\left\{\vv{u}_{n}(\cdot+y_{n})\right\}$ has a subsequence which converges strongly in $H^{1}(\mathbb{R}, \mathbb{C}^{3})$ to some $\vv{u}$, which is a minimizer for $I(\gamma, \mu, s)$.
\end{lemma}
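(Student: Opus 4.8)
The plan is to exploit the compactness alternative $\lambda = \gamma + \mu + s$, which has just been established through Lemmas \ref{L3} and \ref{Ctwo}, and to convert tightness of the mass density into strong $H^1$ convergence. First I would invoke possibility (iii) of the concentration compactness dichotomy: there is a sequence $\{y_n\} \subset \mathbb{R}$ such that, setting $\tilde{u}_{i,n}(x) := u_{i,n}(x + y_n)$, for every $\epsilon > 0$ one can find $r(\epsilon) > 0$ with $\int_{-r(\epsilon)}^{r(\epsilon)} \sum_{i=1}^3 |\tilde{u}_{i,n}|^2 \, dx \geq (\gamma + \mu + s) - \epsilon$ for all large $n$. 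The translated sequence $\{\tilde{\vv{u}}_n\}$ is again a minimizing sequence for $I(\gamma, \mu, s)$, since $E$, $\|\cdot\|_{L^2}$ and all the constraints are translation invariant, and it is bounded in $H^1(\mathbb{R};\mathbb{C}^3)$ by Lemma \ref{DL1}(i).

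Next I would pass to a subsequence along which $\tilde{u}_{i,n} \rightharpoonup u_i$ weakly in $H^1_{\mathbb{C}}(\mathbb{R})$ for each $i$. By the Rellich--Kondrachov theorem the convergence is strong in $L^2_{\mathrm{loc}}$, and combining this with the tightness above upgrades it to strong convergence in $L^2(\mathbb{R})$; in particular $\|u_1\|_{L^2}^2 = \gamma$, $\|u_2\|_{L^2}^2 = \mu$ and $\|u_3\|_{L^2}^2 = s$, so the limit $\vv{u}$ satisfies the constraints and is nontrivial. Strong $L^2$ convergence together with the uniform $H^1$ bound and the one-dimensional Gagliardo--Nirenberg inequality \eqref{GC1} then yields $\tilde{u}_{i,n} \to u_i$ strongly in $L^q(\mathbb{R})$ for every $2 \leq q < \infty$, whence $\|\tilde{u}_{i,n}\|_{L^{p+1}}^{p+1} \to \|u_i\|_{L^{p+1}}^{p+1}$ and $\Re\int_{\mathbb{R}} \tilde{u}_{1,n}\tilde{u}_{2,n}\overline{\tilde{u}_{3,n}}\,dx \to \Re\int_{\mathbb{R}} u_1 u_2 \overline{u_3}\,dx$; that is, all the lower-order terms of $E$ pass to the limit.

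It then remains to handle the gradient terms. By weak lower semicontinuity of $\|\partial_x \cdot\|_{L^2}^2$ and the convergence of the remaining terms, $E(\vv{u}) \leq \liminf_n E(\tilde{\vv{u}}_n) = I(\gamma, \mu, s)$; since $\vv{u}$ meets the three mass constraints, $E(\vv{u}) \geq I(\gamma, \mu, s)$, so $\vv{u}$ is a minimizer and $E(\tilde{\vv{u}}_n) \to E(\vv{u})$. Because the lower-order terms already converge, this forces $\sum_{i=1}^3 \|\partial_x \tilde{u}_{i,n}\|_{L^2}^2 \to \sum_{i=1}^3 \|\partial_x u_i\|_{L^2}^2$; combined with the componentwise weak lower semicontinuity inequalities $\liminf_n \|\partial_x \tilde{u}_{i,n}\|_{L^2}^2 \geq \|\partial_x u_i\|_{L^2}^2$, a short extraction argument (passing to a subsequence along which every gradient norm converges and using that the nonnegative excesses sum to zero) shows $\|\partial_x \tilde{u}_{i,n}\|_{L^2}^2 \to \|\partial_x u_i\|_{L^2}^2$ for each $i$ separately. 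In a Hilbert space, weak convergence plus convergence of norms gives strong convergence, so $\partial_x \tilde{u}_{i,n} \to \partial_x u_i$ in $L^2$, and together with the strong $L^2$ convergence of the functions themselves this yields $\tilde{\vv{u}}_n \to \vv{u}$ strongly in $H^1(\mathbb{R};\mathbb{C}^3)$, proving the claim.

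The main obstacle I anticipate is the passage from weak to strong convergence, carried out in two stages: first promoting $L^2_{\mathrm{loc}}$ convergence to global $L^2$ convergence, where tightness is precisely what prevents mass from escaping to infinity; and second recovering strong convergence of the derivatives, where the key point is that equality in the energy limit forces each gradient norm to converge individually rather than merely in aggregate. Once these two points are settled, conservation of the three masses guarantees that the constraints survive in the limit, and the remaining identifications are routine.
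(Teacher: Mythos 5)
Your proposal is correct and follows essentially the same route as the paper: translate by the concentration points, extract a weak $H^{1}$ limit, upgrade tightness plus $L^{2}_{\mathrm{loc}}$ compactness to strong $L^{2}(\mathbb{R})$ convergence, pass the $L^{p+1}$ and trilinear terms to the limit via Gagliardo--Nirenberg and H\"older, and conclude strong $H^{1}$ convergence from weak convergence plus convergence of norms. The only cosmetic difference is that you recover convergence of each gradient norm separately by an extraction argument, whereas the paper simply notes that the full vector $H^{1}(\mathbb{R};\mathbb{C}^{3})$ norm converges, which already suffices in the product Hilbert space.
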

\begin{proof}
In this proof we will often extract subsequences without explicitly mentioning this fact. Since $\lambda=\gamma+\mu+s$,  by Lions' concentration compactness lemma \cite{PLL1} we see that there exists $\delta_{k}\in \mathbb{R}$ such that, for every $k\in \mathbb{N}$, one has 
\begin{equation*}
\int^{\delta_{k}}_{-\delta_{k}}\left\{|w_{1,n}(x)|^{2}+|w_{2,n}(x)|^{2}+|w_{3,n}(x)|^{2}\right\}\, dx>(\gamma+\mu+s)-\frac{1}{k},
\end{equation*}
for all sufficiently large $n$. For ease of notation, here and hereafter we write $w_{i, n}(x)=u_{i, n}(x+y_{n})$ for $i=1$, $2$, $3$. Thus due to the compactness of the embedding $H_{\rm loc}^{1}(\mathbb{R})\hookrightarrow L_{\rm loc}^{2}(\mathbb{R})$, it follows  that  $\big\{\vv{w}_{n}\big\}$ converges in $L^{2}[-\delta_{k},\delta_{k}]$-norm to a  limit function  $\vv{u}=(u_{1}, u_{2}, u_{3})$ satisfying
\begin{equation*}
\int^{\delta_{k}}_{-\delta_{k}}\left\{|u_{1}(x)|^{2}+|u_{2}(x)|^{2}+|u_{3}(x)|^{2}\right\}\, dx>(\gamma+\mu+s)-\frac{1}{k}.
\end{equation*}
Applying Cantor's diagonalization process, together with the fact that
\begin{equation*}
\int_{\mathbb{R}}\left\{|w_{1,n}(x)|^{2}+|w_{2,n}(x)|^{2}+|w_{3,n}(x)|^{2}\right\}\, dx=\gamma+\mu+s,
\end{equation*}
we obtain that some subsequence of $\big\{(w_{1, n}, w_{2, n}, w_{3, n})\big\}$ converges in $L^{2}(\mathbb{R})$-norm to a limit function  $\vv{u}\in (L^{2}(\mathbb{R}))^{3}$ satisfying
\begin{equation*}
\int_{\mathbb{R}}\left\{|u_{1}(x)|^{2}+|u_{2}(x)|^{2}+|u_{3}(x)|^{2}\right\}\, dx=\gamma+\mu+s.
\end{equation*}
Moreover, it is clear that $\vv{w}_{n}\rightharpoonup \vv{u}$  weakly in $H^{1}(\mathbb{R}, \mathbb{C}^{3})$ and $\vv{w}_{n}\rightarrow\vv{u}$ in $L^{2}(\mathbb{R})$-norm. Next, from the Gagliardo-Nirenberg inequality and H\"older inequality, we get
\begin{equation*}
\|w_{i,n}-u_{i}\|^{p+1}_{L^{p+1}}\leq C\|w_{i,n}-u_{i}\|^{(p+3)/2}_{L^{2}}\rightarrow 0, \quad \Re\int_{\mathbb{R}}w_{1,n}w_{2,n}\overline{w_{3,n}}\,dx\rightarrow \Re\int_{\mathbb{R}}u_{1}u_{2}\overline{u_{3}}\,dx,
\end{equation*}
as $n$ goes to $+\infty$. Consequently, it follows that
\begin{equation*}
E(\vv{u})\leq \lim_{n\rightarrow\infty}E(w_{1,n}, w_{2,n}, w_{3,n})=I(\gamma, \mu, s). 
\end{equation*}
Therefore $E(\vv{u})=I(\gamma, \mu, s)$ and $\vv{u}\in \mathcal{G}_{\gamma, \mu,s}$. Finally, since $E(\vv{u})\rightarrow E(\vv{w}_{n})$, $\|w_{i,n}\|^{p+1}_{L^{p+1}}\rightarrow \|u_{i}\|^{p+1}_{L^{p+1}}$, $\|w_{i,n}\|^{2}_{L^{2}}\rightarrow \|u_{i}\|^{2}_{L^{2}}$ and $\Re\int_{\mathbb{R}}w_{1,n}w_{2,n}\overline{w_{3,n}}\,dx \rightarrow \Re\int_{\mathbb{R}}u_{1}u_{2}\overline{u_{3}}\,dx$, we obtain 
\begin{equation*}
\|\vv{u}\|^{2}_{H^{1}(\mathbb{R}, \mathbb{C}^{3})}=\lim_{n\rightarrow\infty}\|\vv{w}_{n}\|^{2}_{H^{1}(\mathbb{R}, \mathbb{C}^{3})}.
\end{equation*}
Thus, since $\vv{w}_{n}\rightharpoonup \vv{u}$  weakly in $H^{1}(\mathbb{R}, \mathbb{C}^{3})$, we see that $\vv{w}_{n}$ converges strongly to $\vv{u}$ in $H^{1}(\mathbb{R}, \mathbb{C}^{3})$, and hence the result follows.
\end{proof}

\begin{proof}[\bf {Proof of Theorem \ref{SPI}}] The statement (i) follows immediately from Lemma \ref{L20}.  Next we prove statement (ii) of  the theorem. Suppose that \eqref{ABCD} is false. Then there exist a subsequence $\left\{\vv{u}_{n_{k}}\right\}$ of 
$\left\{\vv{u}_{n}\right\}$ and a number $\epsilon>0$ such that,
\begin{equation*}
\lim_{n\rightarrow\infty}\inf_{\vv{g}\in\mathcal{G}_{\gamma, \mu,s}, y\in \mathbb{R}}\|\vv{u}_{n_{k}}(\cdot+y)-\vv{g}\|_{H^{1}(\mathbb{R}; \mathbb{C}^{3})}\geq \epsilon.
\end{equation*}
As $\left\{\vv{u}_{n_{k}}\right\}$ is itself a minimizing sequence for $I(\gamma, \mu, s)$, we see from (i) that there exist a subsequence of real numbers $\left\{y_{k}\right\}$ and $\vv{w}\in \mathcal{G}_{\gamma, \mu,s}$ such that
\begin{equation*}
\liminf_{n\rightarrow\infty}\|\vv{u}_{n_{k}}(\cdot+y_{k})-\vv{w}\|_{H^{1}(\mathbb{R}; \mathbb{C}^{3})}=0.
\end{equation*}
This contradiction proves \eqref{ABCD}. Finally, since $\vv{w}(\cdot-y_{k})\in \mathcal{G}_{\gamma, \mu,s}$, the statement (iii) follows from statement (ii).  
On the other hand, if $\vv{u}\in\mathcal{G}_{\gamma, \mu,s}$, then by the Lagrange multiplier principle there exist numbers $\omega_{1}$, $\omega_{2}$ and $\omega_{3}$ such that 
\begin{equation*}
E^{\prime}(u_{1}, u_{2}, u_{3})=2\omega_{1}u_{1}+2\omega_{2}u_{2}+2\omega_{3}u_{3},
\end{equation*}
where the prime denotes the Fr\'echet derivative. Therefore, by computing the Fr\'echet derivative, we see that equation \eqref{SP} holds at least in the sense of distributions. Moreover, from \cite[Lemma 1.3]{Tao}, we have that $\vv{u}$ is in fact smooth and  classical solution of this equation.  Next we can write $u_{j}(x)=e^{i\theta_{j}(x)}\rho_{j}(x)$, where $\theta_{j}$, $\rho_{j}\in C^{2}(\mathbb{R})$ and $\rho_{j}\geq 0$ for $j=1$, $2$, $3$. In addition, 
\begin{align}\label{Dies}
\|\partial_{x}u_{j}\|^{2}_{L^{2}}=\int_{\mathbb{R}}|\theta_{j}^{\prime}(x)|^{2}\rho^{2}_{j}(x)\,dx +\|\partial_{x}|u_{j}|\|^{2}_{L^{2}}, \quad \text{for $j=1$, $2$, $3$.}
\end{align}
From Lemma \ref{DL5} we have $E(|u_{1}|, |u_{2}|, |u_{3}|)=E(u_{1}, u_{2}, u_{3})$, this implies that   
\begin{align*}
\sum^{3}_{i=1}\|\partial_{x}|u_{i}|\|^{2}_{L^{2}}-\alpha\,\int_{\mathbb{R}}|u_{1}||u_{3}||{u_{3}}|\, dx=\sum^{3}_{i=1}\|\partial_{x}u_{i}\|^{2}_{L^{2}}-\alpha\,\Re\int_{\mathbb{R}}u_{1}u_{3}\overline{u_{3}}\, dx.
\end{align*}
Since $\|\partial_{x}|u_{j}|\|^{2}_{L^{2}}\leq \|\partial_{x}u_{j}\|^{2}_{L^{2}}$ and $\Re\int_{\mathbb{R}}u_{1}u_{3}\overline{u_{3}}\, dx\leq \int_{\mathbb{R}}|u_{1}||u_{3}||{u_{3}}|\, dx$, it follows that $\|\partial_{x}|u_{j}|\|^{2}_{L^{2}}= \|\partial_{x}u_{j}\|^{2}_{L^{2}}$ for every $j=1$, $2$, $3$, and 
\begin{equation}\label{RIh}
\Re\int_{\mathbb{R}}u_{1}u_{3}\overline{u_{3}}\, dx= \int_{\mathbb{R}}|u_{1}||u_{3}||{u_{3}}|\, dx.
\end{equation}
Therefore, since $\rho_{j}$, $\theta_{j}\in C^{2}(\mathbb{R})$, from \eqref{Dies} it follows that $|\theta_{j}^{\prime}(x)|^{2}\rho^{2}_{j}(x)=0$ for all  $x\in\mathbb{R}$ ; this implies that  
$\theta_{j}(x)\equiv\text{constant}=\theta_{j}$. Thus, $u_{j}(x)=e^{i\theta_{j}}\rho_{j}(x)$  on $\mathbb{R}$. Notice that $\rho_{j}\geq 0$ and $\rho_{j}\neq 0$. 

Now we claim that 
\begin{equation}\label{iuy}
\int_{\mathbb{R}}|u_{1}||u_{3}||{u_{3}}|\, dx=\int_{\mathbb{R}}\rho_{1}\rho_{3}{\rho_{3}}\, dx>0.
\end{equation}
Indeed, suppose that $\int_{\mathbb{R}}\rho_{1}\rho_{3}{\rho_{3}}\, dx=0$. Since $I(\gamma, \mu, s)=E(\rho_{1}, \rho_{2}, \rho_{3})$,  we see that $I(\gamma, \mu, s)=E_{1}(\rho_{1})+E_{1}(\rho_{2})+E_{1}(\rho_{3})$, where $E_{1}$ is defined in Remark \ref{AsDf}. On the other hand, let us consider $\omega_{1}$, $\omega_{2}$, $\omega_{3}>0$ such that $\|\phi_{\omega_{1}}\|^{2}_{L^{2}}=\gamma$,  $\|\phi_{\omega_{2}}\|^{2}_{L^{2}}=\mu$ and $\|\phi_{\omega_{3}}\|^{2}_{L^{2}}=s$. Notice that $\int_{\mathbb{R}}\phi_{\omega_{1}}\phi_{\omega_{2}}\phi_{\omega_{3}}dx>0$. Then,  from  Remark \ref{AsDf} it is clear that
\begin{align*}
I(\gamma, \mu, s)&\leq E(\phi_{\omega_{1}}, \phi_{\omega_{2}}, \phi_{\omega_{3}})\\
&<E_{1}(\phi_{\omega_{1}})+E_{1}(\phi_{\omega_{2}})+E_{1}(\phi_{\omega_{3}})\\
&\leq E_{1}(\rho_{1})+E_{1}(\rho_{2})+E_{1}(\rho_{3}),
\end{align*}
which is a contradiction. This shows our claim. Finally, by \eqref{RIh} and \eqref{iuy} we obtain that $\Re (e^{i(\theta_{1}+\theta_{2}-\theta_{3})})=1$, since $|e^{i(\theta_{1}+\theta_{2}-\theta_{3})}|=1$, it follows that $e^{i(\theta_{1}+\theta_{2})}=e^{i\theta_{3}}$. This completes the proof of the theorem.
\end{proof}

\section{Proof of Theorem \ref{CSW}}
\label{S:3/2}
In this section, we prove Theorem \ref{CSW}. In the proofs in this section we follow some ideas in \cite{JLB23}.  Before stating our next lemma we recall a result from \cite{CB}. We define on $H_{\mathbb{C}}^{1}(\mathbb{R})$ the following functional
\begin{equation*}
E_{2}(u)=\frac{1}{2}\|\partial_{x}u\|^{2}_{L^{2}}-\frac{\alpha+\beta}{3}\|u\|^{3}_{L^{3}},
\end{equation*}
and the real number $J({\gamma})$ by
\begin{equation*}
J({\gamma}):=\inf\left\{E_{2}(u): {u}\in H_{\mathbb{C}}^{1}(\mathbb{R}),\,\,\|u\|^{2}_{L^{2}}=\gamma\right\}.
\end{equation*}
Then 
\begin{equation}\label{Nvs}
\psi_{\omega}(x)=\frac{3\omega}{(\alpha+\beta)}\mathrm{sech}^{2}\left(\frac{1}{2}\sqrt{2\omega}\,x\right)
\end{equation}
minimizes the energy functional at fixed mass. More precisely, if $\gamma(\omega)={12\sqrt{2}\omega^{3/2}}/{(\alpha+\beta)^{2}}$, then
$\|\psi_{\omega}\|^{2}_{L^{2}}=\gamma(\omega)$,  and $E_{2}(\psi_{\omega})=J({\gamma(\omega)})$.

\begin{lemma} \label{Lolor}
The following properties hold.\\
{\rm (i)} For every $\gamma>0$, we have $I({\gamma, \gamma, \gamma})=3J({\gamma})$. \\
{\rm (ii)} If $(f_{1}, f_{2}, f_{3})\in \mathcal{G}_{\gamma, \gamma, \gamma}$, then there exist numbers $\theta_{j}\in \mathbb{R}$ and  a non-negative real function $\rho$ such that $f_{1}(x)=e^{i\theta_{1}}\rho(x)$, $f_{2}(x)=e^{i\theta_{2}}\rho(x)$ and $f_{3}(x)=e^{i(\theta_{1}+\theta_{2})}\rho(x)$  for all $x\in \mathbb{R}$ and $j=1$, $2$, $3$.
\end{lemma}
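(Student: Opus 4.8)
The plan is to establish (i) by sandwiching $I(\gamma,\gamma,\gamma)$ between $3J(\gamma)$ from above and below, and then to deduce (ii) by reading off the equality cases of the lower-bound estimate. Throughout I use that for $p=2$ the energy is
\[
E(\vv{u})=\sum_{i=1}^{3}\Big\{\tfrac12\|\partial_x u_i\|_{L^2}^2-\tfrac{\beta}{3}\|u_i\|_{L^3}^3\Big\}-\alpha\,\Re\!\int_{\mathbb{R}}u_1u_2\overline{u_3}\,dx ,
\]
while $E_2(u)=\tfrac12\|\partial_x u\|_{L^2}^2-\tfrac{\alpha+\beta}{3}\|u\|_{L^3}^3$.

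For the upper bound I would test with the diagonal triple $(\psi_\omega,\psi_\omega,\psi_\omega)$, choosing $\omega>0$ so that $\|\psi_\omega\|_{L^2}^2=\gamma$. Because $\psi_\omega$ is real and positive, the coupling term equals $\int_{\mathbb{R}}\psi_\omega^3\,dx=\|\psi_\omega\|_{L^3}^3$, and a one-line computation collapses $E(\psi_\omega,\psi_\omega,\psi_\omega)$ to $3E_2(\psi_\omega)=3J(\gamma)$; hence $I(\gamma,\gamma,\gamma)\le 3J(\gamma)$.

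For the lower bound I would take an arbitrary admissible $\vv{u}$ and first pass to its moduli $\rho_i:=|u_i|$: the bounds $\|\partial_x|u_i|\|_{L^2}\le\|\partial_x u_i\|_{L^2}$ and $\Re\int u_1u_2\overline{u_3}\,dx\le\int|u_1||u_2||u_3|\,dx$ (exactly as in Lemma \ref{DL5}) give $E(\vv{u})\ge E(\rho_1,\rho_2,\rho_3)$. The key ingredient is then the pointwise arithmetic--geometric mean inequality $\rho_1\rho_2\rho_3\le\tfrac13(\rho_1^3+\rho_2^3+\rho_3^3)$, whose integrated form converts the coupling gain into precisely the shift of each coefficient $\tfrac{\beta}{3}$ to $\tfrac{\alpha+\beta}{3}$. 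This yields $E(\rho_1,\rho_2,\rho_3)\ge\sum_{i=1}^{3}E_2(\rho_i)\ge 3J(\gamma)$, since each $\rho_i$ has mass $\gamma$. Taking the infimum over admissible $\vv{u}$ proves $I(\gamma,\gamma,\gamma)\ge 3J(\gamma)$, and combining the two bounds gives (i).

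For (ii) I would apply Theorem \ref{SPI}(iv) to a minimizer $(f_1,f_2,f_3)\in\mathcal{G}_{\gamma,\gamma,\gamma}$ to write $f_1=e^{i\theta_1}\rho_1$, $f_2=e^{i\theta_2}\rho_2$, $f_3=e^{i(\theta_1+\theta_2)}\rho_3$ with $\rho_j\ge 0$ of class $C^2$; this phase alignment makes $f_1f_2\overline{f_3}=\rho_1\rho_2\rho_3\ge0$ and hence $E(f_1,f_2,f_3)=E(\rho_1,\rho_2,\rho_3)=I(\gamma,\gamma,\gamma)=3J(\gamma)$. Consequently the entire chain of inequalities from the lower-bound argument must be saturated. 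The main obstacle---and the only substantial step---is the equality analysis of the integrated AM--GM inequality: $\int_{\mathbb{R}}\big(\tfrac13(\rho_1^3+\rho_2^3+\rho_3^3)-\rho_1\rho_2\rho_3\big)\,dx=0$ with a non-negative integrand forces $\rho_1(x)=\rho_2(x)=\rho_3(x)$ for a.e.\ $x$, hence everywhere by continuity. Writing $\rho:=\rho_1=\rho_2=\rho_3$ yields (ii). I remark that the second saturated inequality $\sum_i E_2(\rho_i)=3J(\gamma)$ forces each $\rho_i$ to minimize $J(\gamma)$, which is the entry point for identifying $\rho$ with a translate of $\psi_\omega$ in Theorem \ref{CSW}.
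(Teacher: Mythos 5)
Your proof is correct, and its skeleton coincides with the paper's: the upper bound by testing with the diagonal triple $(\psi_{\omega},\psi_{\omega},\psi_{\omega})$, the lower bound reducing $E$ to $E_{2}(f_{1})+E_{2}(f_{2})+E_{2}(f_{3})$, and for part (ii) the saturation of that chain combined with the phase structure from Theorem \ref{SPI}(iv). The one genuine difference is the inequality used to absorb the coupling term, and hence the equality analysis. The paper first applies the global H\"older inequality $\int_{\mathbb{R}}|f_{1}||f_{2}||f_{3}|\,dx\leq\|f_{1}\|_{L^{3}}\|f_{2}\|_{L^{3}}\|f_{3}\|_{L^{3}}$ and then Young's inequality on the three norms; in part (ii) the saturation therefore forces equality in H\"older, which only gives proportionality of $\rho_{1}$, $\rho_{2}$, $\rho_{3}$, and the paper must then invoke the normalization $\|\rho_{j}\|^{2}_{L^{2}}=\gamma$ to upgrade proportionality to $\rho_{1}=\rho_{2}=\rho_{3}$. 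You instead use the pointwise AM--GM inequality $\rho_{1}\rho_{2}\rho_{3}\leq\frac{1}{3}(\rho_{1}^{3}+\rho_{2}^{3}+\rho_{3}^{3})$ and integrate; saturation then says a non-negative integrand has zero integral, so AM--GM is an equality at almost every point, which yields $\rho_{1}=\rho_{2}=\rho_{3}$ a.e.\ (hence everywhere, by the continuity provided by Theorem \ref{SPI}(iv)) in one stroke, with no appeal to H\"older's equality condition or to the equal masses. The two routes cost the same in part (i); yours is slightly more self-contained in part (ii), while the paper's has the advantage that its displayed H\"older--Young chain \eqref{CVNa} is reused verbatim as \eqref{IEp}. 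Your closing remark that saturation also forces $E_{2}(\rho_{i})=J(\gamma)$ for each $i$ is correct and is precisely the entry point exploited in the proof of Theorem \ref{CSW}.
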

\begin{proof}
First, by applying H\"older inequality and Young's inequality we obtain
\begin{align}\nonumber
\Re\int_{\mathbb{R}}f_{1}f_{2}\overline{f_{3}}\, dx&\leq \int_{\mathbb{R}}|f_{1}||f_{2}||{f_{3}}|\, dx\leq \|f_{1}\|^{3}_{L^{3}}\|f_{2}\|^{3}_{L^{3}}\|f_{3}\|^{3}_{L^{3}}\\ \label{CVNa}
&\leq \frac{1}{3} \|f_{1}\|^{3}_{L^{3}}+\frac{1}{3} \|f_{2}\|^{3}_{L^{3}}+\frac{1}{3}\|f_{3}\|^{3}_{L^{3}}.
\end{align}
This implies that
\begin{align*}
E(\vv{f})&=\sum^{3}_{i=1}\left\{\frac{1}{2}\int_{\mathbb{R}}|\partial_{x}f_{i}|^{2}\,dx-\frac{\beta}{3}\int_{\mathbb{R}}|f_{i}|^{3}\,dx\right\}-\alpha\,\Re\int_{\mathbb{R}}f_{1}f_{2}\overline{f_{3}}\, dx\\
&\geq\sum^{3}_{i=1}\left\{\frac{1}{2}\int_{\mathbb{R}}|\partial_{x}f_{i}|^{2}\,dx-\left(\frac{\beta+\alpha}{3}\right)\int_{\mathbb{R}}|f_{i}|^{3}\,dx\right\}\\
&=E_{2}(f_{1})+E_{2}(f_{2})+E_{2}(f_{3}),
\end{align*}
where $\vv{f}=(f_{1}, f_{2}, f_{3})$. Therefore, taking the infima on both sides of the above inequity,  we see that $I({\gamma, \gamma, \gamma})\geq 3J({\gamma})$. Now, since $\|\phi_{\omega}\|^{2}_{L^{2}}=\gamma$ for any $\omega>0$, we infer that 
\begin{align*}
I({\gamma, \gamma, \gamma})&\leq E(\phi_{\omega}, \phi_{\omega}, \phi_{\omega})=E_{2}(\phi_{\omega})+E_{2}(\phi_{\omega})+E_{2}(\phi_{\omega})=3J({\gamma}).
\end{align*}
Hence $I({\gamma, \gamma, \gamma})=3J({\gamma})$. Thus we obtain the proof of statement (i) of the lemma.
Next we prove (ii).  Let $(f_{1}, f_{2}, f_{3})\in \mathcal{G}_{\gamma, \gamma, \gamma}$. From \eqref{CVNa} and statement (i) we have
\begin{align*}
I({\gamma, \gamma, \gamma})=E(\vv{f})\geq E_{2}(f_{1})+E_{2}(f_{2})+E_{2}(f_{3})\geq 3J({\gamma})=I({\gamma, \gamma, \gamma}).
\end{align*}
Thus, $E(\vv{f})=E_{2}(f_{1})+E_{2}(f_{2})+E_{2}(f_{3})$. In particular, it follows that
\begin{align}\label{IEp}
\frac{1}{3}\sum^{3}_{i=1}\|f_{i}\|^{3}_{L^{3}}=\Re\int_{\mathbb{R}}f_{1}f_{2}\overline{f_{3}}\, dx\leq \int_{\mathbb{R}}|f_{1}||f_{2}|||{f_{3}}|\, dx\leq \frac{1}{3}\sum^{3}_{i=1}\|f_{i}\|^{3}_{L^{3}}.
\end{align}
From Theorem \ref{SPI} we may write $f_{j}(x)=e^{\theta_{j}}\rho_{j}(x)$, where $\theta_{j}\in \mathbb{R}$ and $\rho_{j}\in C^{2}(\mathbb{R})$, and $\rho_{j}\geq 0$ for $j=1$, $2$, $3$. Thus, by \eqref{IEp} we get  
\begin{align*}
\int_{\mathbb{R}}\rho_{1}(x)\rho_{2}(x){\rho_{3}(x)}\, dx=\|\rho_{1}\|^{3}_{L^{3}}\|\rho_{2}\|^{3}_{L^{3}}\|\rho_{3}\|^{3}_{L^{3}};
\end{align*}
that is, these functions $\rho_{j}$ satisfy the equality in H\"older's inequality. Therefore, we see that $\rho_{1}(x)/ \|\rho_{1}\|^{3}_{L^{3}}=\rho_{2}(x)/ \|\rho_{2}\|^{3}_{L^{3}}=\rho_{3}(x)/ \|\rho_{3}\|^{3}_{L^{3}}$. Since $\|\rho_{j}\|^{2}_{L^{2}}=\gamma$, it follows that $\rho_{1}(x)=\rho_{2}(x)=\rho_{3}(x)$ on $\mathbb{R}$. Hence the lemma is proved.
\end{proof}

Now we give the proof of Theorem \ref{CSW}.
\begin{proof}[\bf {Proof of Theorem \ref{CSW}}] 
Notice first that from Lemma \ref{Lolor}(i),  it follows that 
\begin{equation*}
\left\{\left(e^{i\theta_{1}}\psi_{\omega}(\cdot+y), e^{i\theta_{2}}\psi_{\omega}(\cdot+y), e^{i(\theta_{1}+\theta_{2})}\psi_{\omega}(\cdot+y)\right): \theta_{1}, \theta_{2}, y\in\mathbb{R}\right\}\subset \mathcal{G}_{\gamma, \gamma, \gamma}.
\end{equation*}
Hence the theorem is proved if we can show that any function in $\mathcal{G}_{\gamma, \gamma, \gamma}$ must be of the form given here.
Let $(f_{1}, f_{2}, f_{3})\in \mathcal{G}_{\gamma, \gamma, \gamma}$.  From Lemma \ref{Lolor}(ii),  there exist numbers $\theta_{j}\in \mathbb{R}$ and  a non-negative real function $\rho$ such that $f_{j}(x)=e^{i\theta_{j}}\rho(x)$. Moreover, by the Lagrange multiplier principle there exist numbers $\omega_{1}$, $\omega_{2}$ and $\omega_{3}$ such that 
\begin{equation*}
\begin{cases} 
-\partial^{2}_{x}\rho+2\omega_{1}\rho-(\beta+\alpha)\rho^{2},  \\
-\partial^{2}_{x}\rho+2\omega_{2}\rho-(\beta+\alpha)\rho^{2},  \\
-\partial^{2}_{x}\rho+2\omega_{3}\rho-(\beta+\alpha)\rho^{2},  \\
\end{cases} 
\end{equation*}
It is not difficult to show that $\omega_{1}=\omega_{2}=\omega_{3}>0$. But then, since $\rho\in C^{2}(\mathbb{R})$, an elementary calculation shows that the only real $L^{2}$-solution of system above is given by 
\begin{equation*}
\rho(x)=\frac{3\omega}{(\alpha+\beta)}\mathrm{sech}^{2}\left(\frac{1}{2}\sqrt{2\omega}\,x\right),
\end{equation*}
where $\omega=\omega_{i}$. Therefore, $f_{1}(x)=e^{i\theta_{1}}\psi_{\omega}(x)$, $f_{2}(x)=e^{i\theta_{2}}\psi_{\omega}(x)$ and $f_{3}(x)=e^{i(\theta_{1}+\theta_{2})}\psi_{\omega}(x)$; and hence the result follows.
\end{proof}

\section{Stability of the standing waves}
\label{S:2}

For simplicity, throughout this section we assume that $\beta=1$. Before giving the proof of Theorem \ref{NTE} and Corollary \ref{CLE}, we need to establish some preliminary results.
\begin{lemma}\label{LSdr}
Suppose $\gamma>0$ and  $\mu>0$. The infimum $J(\gamma, \mu)$ defined in \eqref{Nvp} is finite. Furthermore, any minimizing sequence for $J(\gamma, \mu)$ is bounded in $H^{1}(\mathbb{R}; \mathbb{C}^{3})$.
\end{lemma}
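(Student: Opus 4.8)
The plan is to show that $E$ is coercive on the constraint set, which will simultaneously give the finiteness of $J(\gamma,\mu)$ and the boundedness of any minimizing sequence; the argument parallels Lemma \ref{L1} and Lemma \ref{DL1}(i), the only new feature being that the constraints $Q_{1}(\vv{u})=\gamma$ and $Q_{2}(\vv{u})=\mu$ no longer prescribe the three individual masses but still control them.

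First I would observe that if $Q_{1}(\vv{u})=\gamma$ and $Q_{2}(\vv{u})=\mu$, then $\|u_{3}\|^{2}_{L^{2}}\leq\min\{\gamma,\mu\}$, $\|u_{1}\|^{2}_{L^{2}}=\gamma-\|u_{3}\|^{2}_{L^{2}}\leq\gamma$ and $\|u_{2}\|^{2}_{L^{2}}=\mu-\|u_{3}\|^{2}_{L^{2}}\leq\mu$. Hence each $L^{2}$-norm $\|u_{i}\|_{L^{2}}$ is bounded by a constant depending only on $\gamma$ and $\mu$. This is precisely what is needed in order to apply the Gagliardo--Nirenberg estimate \eqref{GC1} and the trilinear estimate \eqref{E1}, absorbing the (now bounded) factors $\|u_{i}\|_{L^{2}}$ into a constant $C=C(\gamma,\mu)$.

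With these estimates in hand, and recalling $\beta=1$, I would bound, for every $\vv{u}$ in the constraint set,
\begin{equation*}
E(\vv{u})\geq \frac{1}{2}\sum_{i=1}^{3}\|\partial_{x}u_{i}\|^{2}_{L^{2}}-C\sum_{i=1}^{3}\|\partial_{x}u_{i}\|^{(p-1)/2}_{L^{2}}-C\sum_{i=1}^{3}\|\partial_{x}u_{i}\|^{1/2}_{L^{2}}.
\end{equation*}
Since $1<p<5$, both exponents $(p-1)/2$ and $1/2$ are strictly smaller than $2$, so by Young's inequality each subquadratic term is dominated by $\frac{1}{4}\|\partial_{x}u_{i}\|^{2}_{L^{2}}$ plus a constant. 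The right-hand side is therefore bounded below by a constant depending only on $\gamma$ and $\mu$, which yields $J(\gamma,\mu)>-\infty$.

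For the boundedness claim, let $\left\{\vv{u}_{n}\right\}$ be a minimizing sequence, so that $E(\vv{u}_{n})$ is bounded above, say by a constant $M$. The displayed coercivity inequality then gives
\begin{equation*}
\frac{1}{2}\sum_{i=1}^{3}\|\partial_{x}u_{i,n}\|^{2}_{L^{2}}\leq M+C\sum_{i=1}^{3}\left(\|\partial_{x}u_{i,n}\|^{(p-1)/2}_{L^{2}}+\|\partial_{x}u_{i,n}\|^{1/2}_{L^{2}}\right),
\end{equation*}
and since the right-hand side grows strictly more slowly than the left as $\sum_{i}\|\partial_{x}u_{i,n}\|^{2}_{L^{2}}\to\infty$, the quantity $\sum_{i}\|\partial_{x}u_{i,n}\|^{2}_{L^{2}}$ must stay bounded. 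Combined with the uniform $L^{2}$ bounds from the first step, this shows that $\left\{\vv{u}_{n}\right\}$ is bounded in $H^{1}(\mathbb{R}; \mathbb{C}^{3})$. I expect the only genuinely delicate point to be the first step---recognizing that fixing only $Q_{1}$ and $Q_{2}$ already bounds all three individual masses---after which the coercivity argument is routine and essentially identical to the one in Lemma \ref{L1}.
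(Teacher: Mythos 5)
Your proposal is correct and follows essentially the same route as the paper: bound the individual $L^{2}$ masses using the constraints $Q_{1}$ and $Q_{2}$, then invoke the Gagliardo--Nirenberg and trilinear estimates \eqref{GC1} and \eqref{E1} to obtain the coercivity bound $E(\vv{u})\geq \frac{1}{2}\sum_{i}\|\partial_{x}u_{i}\|^{2}_{L^{2}}-C\sum_{i}\|\partial_{x}u_{i}\|^{(p-1)/2}_{L^{2}}-C\sum_{i}\|\partial_{x}u_{i}\|^{1/2}_{L^{2}}$, from which finiteness of $J(\gamma,\mu)$ and boundedness of minimizing sequences both follow since the negative terms are subquadratic. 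The paper's proof is identical in substance, concluding the boundedness step by referring back to Lemma \ref{DL1}(i) exactly as you do.
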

\begin{proof}
Suppose that $\left\{\vv{u}_{n}\right\}$ is a minimizing sequence of problem $J(\gamma, \mu)$. Since $\|u_{1, n}\|^{2}_{L^{2}}+\|u_{3, n}\|^{2}_{L^{2}}\rightarrow\gamma$ and  $\|u_{2, n}\|^{2}_{L^{2}}+\|u_{3, n}\|^{2}_{L^{2}}\rightarrow\mu$ as $n$ goes to $+\infty$, it follows that the sequence  $\|u_{i, n}\|^{2}_{L^{2}}$ is bounded for $i=1$, $2$, $3$. Now, from the Gagliardo-Nirenberg inequality and H\"older inequality, it is clear that (see \eqref{E1})
\begin{equation*}
\|u_{i, n}\|^{p+1}_{L^{p+1}}\leq C \|\partial_{x}u_{i, n}\|^{(p-1)/2}_{L^{2}}, \quad \left|\Re\int_{\mathbb{R}}u_{1, n}u_{2, n}\overline{u_{3, n}}\, dx\right|\leq C\sum^{3}_{i=1} \|\partial_{x}u_{i,n}\|^{\frac{1}{2}}_{L^{2}},
\end{equation*}
where $C$ is independent of $u_{i, n}$. Since $p-1<4$, we obtain
\begin{align*}
E(\vv{u}_{n})&=\sum^{3}_{i=1}\left\{\frac{1}{2}\int_{\mathbb{R}}|\partial_{x}u_{i, n}|^{2}\,dx-\frac{1}{p+1}\int_{\mathbb{R}}|u_{i, n}|^{p+1}\,dx\right\}-\alpha\,\Re\int_{\mathbb{R}}u_{1, n}u_{2, n}\overline{u_{3, n}}\, dx \\
&\geq \frac{1}{2}\sum^{3}_{i=1}\|\partial_{x}u_{i, n}\|^{2}_{L^{2}}-C\sum^{3}_{i=1}\|\partial_{x}u_{i, n}\|^{(p-1)/2}_{L^{2}}-C\sum^{3}_{i=1} \|\partial_{x}u_{i,n}\|^{\frac{1}{2}}_{L^{2}}\\
&>-\infty.
\end{align*}
Therefore, $J(\gamma, \mu)>-\infty$. 

The remainder of the proof follows exactly as in Lemma \ref{DL1} i).
\end{proof}

\begin{lemma}\label{LSwer}
Suppose  $\gamma>0$ and  $\mu>0$. If $\left\{(f_{n}, g_{n}, h_{n})\right\}\subseteq H^{1}(\mathbb{R}; \mathbb{C}^{3})$ is a minimizing sequence for $J(\gamma, \mu)$, then there exist a subsequence, which is still denoted by $\left\{(f_{n}, g_{n}, h_{n})\right\}$,  and a number $0< a \leq \min\left\{\gamma, \mu\right\}$ such that 
\begin{equation*}
\lim_{n\rightarrow\infty}\|h_{n}\|_{L^{2}}^{2}=a, \quad \lim_{n\rightarrow\infty}E(f_{n}, g_{n}, h_{n})=I(\gamma-a, \mu-a, a).
\end{equation*}
In particular, $J(\gamma, \mu)=I(\gamma-a, \mu-a, a)$.
\end{lemma}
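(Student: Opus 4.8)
The plan is to exploit the elementary observation that fixing the mass of the third component turns the $J$-problem into an $I$-problem. Concretely, for any $\vv{u}=(f,g,h)$ admissible for $J(\gamma,\mu)$, writing $b=\|h\|_{L^2}^2$ forces $\|f\|_{L^2}^2=\gamma-b\ge 0$ and $\|g\|_{L^2}^2=\mu-b\ge 0$, so $b\le\min\{\gamma,\mu\}$ and $E(\vv{u})\ge I(\gamma-b,\mu-b,b)$; conversely, every configuration admissible for $I(\gamma-b,\mu-b,b)$ has $Q_1=(\gamma-b)+b=\gamma$ and $Q_2=(\mu-b)+b=\mu$, hence is admissible for $J(\gamma,\mu)$, so that $I(\gamma-b,\mu-b,b)\ge J(\gamma,\mu)$ for every $b\in[0,\min\{\gamma,\mu\}]$. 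This pair of inequalities is the engine of the proof.

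First I would pass to a subsequence. By Lemma \ref{LSdr} the sequence $\{(f_n,g_n,h_n)\}$ is bounded in $H^{1}(\mathbb{R};\mathbb{C}^3)$, so $\{\|h_n\|_{L^2}^2\}$ is bounded; moreover $\|h_n\|_{L^2}^2\le Q_1(\vv{u}_n)$ and $\|h_n\|_{L^2}^2\le Q_2(\vv{u}_n)$, which tend to $\gamma$ and $\mu$. Hence, after extraction, $\|h_n\|_{L^2}^2\to a$ for some $a\in[0,\min\{\gamma,\mu\}]$, and consequently $\|f_n\|_{L^2}^2\to\gamma-a$ and $\|g_n\|_{L^2}^2\to\mu-a$. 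I would then establish $\lim_n E(f_n,g_n,h_n)=I(\gamma-a,\mu-a,a)$. The bound $\le$ is immediate since $I(\gamma-a,\mu-a,a)\ge J(\gamma,\mu)=\lim_n E(f_n,g_n,h_n)$. For $\ge$, I would rescale each component by the scalar $\sqrt{\gamma-a}/\|f_n\|_{L^2}\to 1$ (and analogously for $g_n,h_n$) to obtain an admissible sequence for $I(\gamma-a,\mu-a,a)$; as the scaling factors tend to $1$ and the sequence is $H^1$-bounded, the energy is perturbed by $o(1)$, giving $\liminf_n E(f_n,g_n,h_n)\ge I(\gamma-a,\mu-a,a)$. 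When one of $\gamma-a,\mu-a$ vanishes, the corresponding component has $L^2$-norm tending to $0$, so by Gagliardo--Nirenberg its $L^{p+1}$-norm and the coupling integral tend to $0$ and one argues on the surviving components exactly as in Lemma \ref{LF3}. Combining the two inequalities with $\lim_n E(f_n,g_n,h_n)=J(\gamma,\mu)$ yields $J(\gamma,\mu)=I(\gamma-a,\mu-a,a)$.

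It remains to prove $a>0$, and this is the heart of the matter. Since $J(\gamma,\mu)=I(\gamma-a,\mu-a,a)$, it suffices to establish the strict inequality $J(\gamma,\mu)<I(\gamma,\mu,0)$, which rules out $a=0$. To this end I would build a trial function exploiting the attractive cubic coupling: with $\phi_{\omega_1},\phi_{\omega_2}$ the positive minimizers of Remark \ref{AsDf} at masses $\gamma$ and $\mu$ (so that $I(\gamma,\mu,0)=E_1(\phi_{\omega_1})+E_1(\phi_{\omega_2})$) and $\eta\ge 0$ a fixed normalized bump, set $u_1=\sqrt{(\gamma-b)/\gamma}\,\phi_{\omega_1}$, $u_2=\sqrt{(\mu-b)/\mu}\,\phi_{\omega_2}$ and $u_3=\sqrt{b}\,\eta$ for small $b>0$, which is admissible for $J(\gamma,\mu)$. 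A direct expansion gives
\[
E(u_1,u_2,u_3)=I(\gamma,\mu,0)-\alpha C_0\sqrt{b}+O(b),\qquad C_0=\int_{\mathbb{R}}\phi_{\omega_1}\phi_{\omega_2}\,\eta\,dx>0,
\]
because the coupling term is of order $\|u_3\|_{L^2}\sim\sqrt{b}$ while the cost of redistributing mass (the changes in $E_1(u_1),E_1(u_2)$ and the term $E_1(u_3)$) is of order $b$. Hence $E(u_1,u_2,u_3)<I(\gamma,\mu,0)$ for $b$ small, so $J(\gamma,\mu)<I(\gamma,\mu,0)$ and therefore $a>0$.

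I expect this final step to be the main obstacle: one must verify that the interaction genuinely lowers the energy, i.e.\ that the leading correction is $-\alpha C_0\sqrt{b}$ with $C_0>0$ and that all remaining contributions are $O(b)$ (here using $p>1$, so $b^{(p+1)/2}=o(b)$). It is precisely this dominance of the $\sqrt{b}$ gain over the linear-in-$b$ cost that forces a strictly positive fraction of the mass into the third component, yielding $a>0$ and completing the proof.
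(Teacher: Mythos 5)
Your proposal is correct, and its first two steps (subsequence extraction giving $a\in[0,\min\{\gamma,\mu\}]$, then the identification $\lim_n E(f_n,g_n,h_n)=I(\gamma-a,\mu-a,a)$ via rescaling factors tending to $1$, with Gagliardo--Nirenberg used to discard any component whose mass vanishes) coincide with the paper's argument. Where you genuinely diverge is the crucial point $a>0$. The paper argues by contradiction through the Euler--Lagrange system: if $a=0$, then $J(\gamma,\mu)=S(\gamma)+S(\mu)$ and $(\phi_{\omega_1},\phi_{\omega_2},0)$ is a minimizer of $J(\gamma,\mu)$; the Lagrange multiplier principle then forces this triple to satisfy \eqref{SP}, whose third equation reads $0=\alpha\,\phi_{\omega_1}\phi_{\omega_2}$, impossible since both profiles are everywhere positive. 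You instead prove the strict inequality $J(\gamma,\mu)<I(\gamma,\mu,0)$ directly, by testing with $\bigl(\sqrt{(\gamma-b)/\gamma}\,\phi_{\omega_1},\,\sqrt{(\mu-b)/\mu}\,\phi_{\omega_2},\,\sqrt{b}\,\eta\bigr)$ and checking that the coupling gain $-\alpha C_0\sqrt{b}$, with $C_0=\int_{\mathbb{R}}\phi_{\omega_1}\phi_{\omega_2}\eta\,dx>0$, dominates the $O(b)$ cost of redistributing mass; your order-counting ($O(b)$ for the $E_1$ perturbations, $b^{(p+1)/2}=o(b)$ for the new nonlinear term) is accurate. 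Both routes are sound. Yours is more elementary and quantitative: it avoids the multiplier rule and the regularity needed to interpret a minimizer as a solution of \eqref{SP}, and it is in the same spirit as the paper's own strict-subadditivity computation in Case (iii) of Lemma \ref{Le1}, where the sign of the coupling term is exploited in exactly this way. The paper's route is shorter given that the Lagrange-multiplier machinery is already in place for Theorem \ref{SPI}(iv), and the same contradiction also yields the paper's side remark that $a<\min\{\gamma,\mu\}$ whenever $\gamma\neq\mu$. One presentational point: your "drop the vanishing component" step should be stated to cover the case $a=0$ itself (not only $\gamma-a=0$ or $\mu-a=0$), since the identity $J(\gamma,\mu)=I(\gamma,\mu,0)$ under the hypothesis $a=0$ is precisely what your final step contradicts; the identical Gagliardo--Nirenberg reasoning applies, so this is a matter of wording rather than a gap.
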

\begin{proof} To prove this, we use some ideas of \cite{AlbertAngulo, Albert}. Since $\|f_{n}\|^{2}_{L^{2}}+\|h_{n}\|^{2}_{L^{2}}\rightarrow\gamma$ and $\|g_{n}\|^{2}_{L^{2}}+\|h_{n}\|^{2}_{L^{2}}\rightarrow\mu$, it follows that the sequence defined by ${a}_{n}=\|h_{n}\|^{2}_{L^{2}}$ is bounded. Thus,  by passing to a subsequence if necessary,  we can assume that $a_{n}\rightarrow a$. Notice that $0\leq a \leq \min\left\{\gamma, \mu\right\}$. Moreover, $\|f_{n}\|^{2}_{L^{2}}\rightarrow \gamma-a$ and $\|g_{n}\|^{2}_{L^{2}}\rightarrow \mu-a$.

We claim that $a>0$.  Suppose by contradiction that $a=0$.  Then we have that $\|h_{n}\|^{}_{L^{2}}\rightarrow 0$,
$\|f_{n}\|_{L^{2}}\rightarrow \gamma$  and $\|g_{n}\|_{L^{2}}\rightarrow\mu$. In particular, notice that $\|h_{n}\|^{p+1}_{L^{p}}\rightarrow 0$ and $\Re\int_{\mathbb{R}}f_{n}g_{n}\overline{h_{n}}\, dx\rightarrow 0$ as $n$ goes to $+\infty$. Set  $l_{n}={\sqrt{\gamma}}/{\|f_{n}\|_{L^{2}}}$ and   $k_{n}=\sqrt{{\mu}}/{\|g_{n}\|^{}_{L^{2}}}$.
Since $\|l_{n}f_{n}\|^{2}_{L^{2}}=\gamma$ and $\| k_{n}g_{n}\|^{2}_{L^{2}}=\mu$, it follows from Remark \ref{AsDf},
\begin{align*}
J(\gamma, \mu)&=\lim_{n\rightarrow\infty}E(l_{n}f_{n}, k_{n}g_{n}, h_{n})\geq \lim_{n\rightarrow\infty}E_{1}(l_{n}f_{n})+ 
\lim_{n\rightarrow\infty}E_{1}(k_{n}g_{n})\\
&\geq S(\gamma)+S(\mu).
\end{align*}
Now let us consider $\omega_{1}$, $\omega_{2}>0$ such that $\|\phi_{\omega_{1}}\|^{2}_{L^{2}}=\gamma$ and   $\|\phi_{\omega_{2}}\|^{2}_{L^{2}}=\mu$. It is clear that $J(\gamma, \mu)\leq E(\phi_{\omega_{1}}, \phi_{\omega_{2}}, 0)=S(\gamma)+S(\mu)$; therefore, $J(\gamma, \mu)=S(\gamma)+S(\mu)$ and $(\phi_{\omega_{1}}, \phi_{\omega_{2}}, 0)\in \mathcal{M}_{\gamma, \mu}$. Since $(\phi_{\omega_{1}}, \phi_{\omega_{2}}, 0)$ is a minimizer for $J(\gamma, \mu)$, using the Lagrange Multiplier principle, we see that $(\phi_{\omega_{1}}, \phi_{\omega_{2}}, 0)$ has to satisfy \eqref{SP}. In particular, from the last equation of the system  \eqref{SP} we see that $\phi_{\omega_{1}}(x)\phi_{\omega_{2}}(x)=0$ for all $x\in \mathbb{R}$, which is a contradiction. This shows our claim.

Notice that if $\gamma\neq\mu$, then $a<\min\left\{\gamma, \mu\right\}$; the proof is similar to the proof developed above in the case $a=0$. Therefore, if $\gamma\neq\mu$, then $0<a<\min\left\{\gamma, \mu\right\}$.

Next, it is clear that $J(\gamma, \mu)\leq I(\gamma-a, \mu-a, a)$. We claim that 
\begin{equation}\label{mnbv}
J(\gamma, \mu)\geq I(\gamma-a, \mu-a, a).
\end{equation}
To prove \eqref{mnbv}, it suffices to consider the following three cases: (i) $0<a<\min\left\{\gamma, \mu\right\}$; (ii) $\gamma=\mu$ and $0<a<\gamma$; and  (iii) $\gamma=\mu$ and $a=\gamma$.

\textbf{Case (i)} Notice that since $0<a<\min\left\{\gamma, \mu\right\}$, it follows that  $a$, $\gamma-a$, $\mu-a$ are all positive.
Then, for $n$ sufficiently large we may define
\begin{align*}
l_{n}&=\frac{\sqrt{\gamma-a}}{\|f_{n}\|^{}_{L^{2}}},\quad\quad  k_{n}=\frac{\sqrt{\mu-a}}{\|g_{n}\|^{}_{L^{2}}}, \quad \quad b_{n}=\frac{\sqrt{a}}{\|h_{n}\|^{}_{L^{2}}},
\end{align*}
then we see  that $l_{n}$, $k_{n}$, $b_{n}\rightarrow 1$ as $n$ goes to $+\infty$ and 
\begin{align*}
\|l_{n}f_{n}\|^{2}_{L^{2}}=\gamma_{}-a,\quad\quad  \| k_{n}g_{n}\|^{2}_{L^{2}}=\mu_{}-a, \quad\quad\|b_{n}h_{n}\|^{2}_{L^{2}}=a.
\end{align*}
Therefore, 
\begin{align*}
J(\gamma, \mu)=\lim_{n\rightarrow\infty}E(l_{n}f_{n}, k_{n}g_{n}, b_{n}h_{n})\geq I(\gamma-a, \mu-a, a),
\end{align*}
and hence \eqref{mnbv} follows. 

\textbf{Case (ii)} The proof is the same as in the Case (i). 

\textbf{Case (iii)} In this case we have that $\|f_{n}\|^{}_{L^{2}}\rightarrow 0$, $\|g_{n}\|_{L^{2}}\rightarrow 0$  and $\|h_{n}\|_{L^{2}}\rightarrow \gamma$. This implies that $\|f_{n}\|^{p+1}_{L^{p}}\rightarrow 0$, $\|g_{n}\|^{p+1}_{L^{p}}\rightarrow 0$ and $\Re\int_{\mathbb{R}}f_{n}g_{n}\overline{h_{n}}\, dx\rightarrow 0$. Therefore,
\begin{align*}
J(\gamma, \gamma)&=\lim_{n\rightarrow\infty}E(f_{n}, g_{n}, c_{n}h_{n})\geq \lim_{n\rightarrow\infty}E_{1}(c_{n}h_{n})\\
&\geq S(\gamma)=I(0, 0, \gamma),
\end{align*}
where $c_{n}=\sqrt{{\gamma}}/{\|h_{n}\|^{}_{L^{2}}}$. This shows our claim in all cases and the proof of the lemma is completed.
\end{proof}

Now we give the proof of Theorem \ref{NTE}.
\begin{proof}[\bf {Proof of Theorem \ref{NTE}}] 
Let $\left\{(f_{n}, g_{n}, h_{n})\right\}$ be a minimizing sequence for $J(\gamma, \mu)$. 
We claim that the minimizing sequence $\left\{(f_{n}, g_{n}, h_{n})\right\}$  is relatively compact in $H^{1}(\mathbb{R}; \mathbb{C}^{3})$ up to translations. Indeed, from Lemma \ref{LSwer} and  passing to a subsequence,  we may assume that $\left\{(f_{n}, g_{n}, h_{n})\right\}$ is a minimizing sequence for $I(\gamma-a, \mu-a, a)$, with $0<a \leq\min\left\{\gamma, \mu\right\}$. Now, if $0 < a < \min\left\{\gamma, \mu\right\}$, we have that $a$, $\gamma-a$, $\mu-a$ are all positive, then  Theorem \ref{SPI} (i) allows us to conclude that there exist $\left\{y_{n}\right\}\subset \mathbb{R}$ and an element $\vv{\varphi}$ such that  $\left\{(f_{n}(\cdot+y_{n}), g_{n}(\cdot+y_{n}), h_{n}(\cdot+y_{n}))\right\}$ has a subsequence converging  in $H^{1}(\mathbb{R}; \mathbb{C}^{3})$ to $\vv{\varphi}$.

If, on the other hand, $\gamma=\mu$  and $a=\min\left\{\gamma, \mu\right\}=\gamma$, then as in the proof of Case (ii) in  Lemma \ref{LSwer} we see that
\begin{align*}
I(0, 0, \gamma)&=J(\gamma, \mu)=\lim_{n\rightarrow\infty}E(f_{n}, g_{n}, h_{n})\\
&\geq\lim_{n\rightarrow\infty}E_{1}(h_{n})\geq I(0, 0, \gamma).
\end{align*}
Therefore,  $\|h_{n}\|^{2}_{L^{2}}\rightarrow \gamma$ and $E_{1}(h_{n})\rightarrow S(\gamma)$. Then the claim is easily deduced from Remark \ref{AsDf}. Thus we obtain the proof of (i) of the theorem. 

Statement (ii) of the theorem is obvious from the definition of the sets $\mathcal{M}_{\gamma, \mu}$. Next, we prove (iii). Suppose $(f, g, h)\in \mathcal{M}_{\gamma, \mu}$. From Lemma \ref{LSwer} we have that  
$(f, g, h)\in \mathcal{G}_{\gamma-a, \mu-a,a}$ for some $0< a \leq \min\left\{\gamma, \mu\right\}$. Again, 
if $0< a <\min\left\{\gamma, \mu\right\}$ then the statement  (iii) follows immediately from part (iv) of the Theorem \ref{SPI}.
On the other hand, if $\gamma=\mu$ and $a=\gamma$,  the statement  (iii) is an  immediate consequence of the Remark \ref{AsDf}. This completes the proof of the theorem.
\end{proof}

Now we give the proof of Corollary \ref{CLE}.
\begin{proof}[\bf {Proof of Corollary \ref{CLE}}]
Suppose that $\mathcal{M}_{\gamma, \mu}$ is $H^{1}(\mathbb{R}; \mathbb{C}^{3})$-unstable. Then there are some $\epsilon>0$, initial data $\vv{u}_{0, n}$ and points $t_{n}>0$ such that 
\begin{equation*}
\inf\left\{\|\vv{u}_{0 ,n}-\vv{\varphi}\|_{H^{1}(\mathbb{R}; \mathbb{C}^{3})}: \vv{\varphi}\in \mathcal{M}_{\gamma, \mu}\right\}<\frac{1}{n},
\end{equation*}
but
\begin{equation}\label{bnj}
\inf\left\{ \|\vv{u}_{n}(t_{n})-\vv{\varphi}\|_{H^{1}(\mathbb{R}; \mathbb{C}^{3})}:  \vv{\varphi}\in \mathcal{M}_{\gamma, \mu}  \right\}\geq {\epsilon},
\end{equation}
for all $n$, where $\vv{u}_{n}(x,t)$ denotes the solution of the Cauchy problem \eqref{NLS} with initial data $\vv{u}_{0, n}$. Since $\vv{u}_{0, n}\rightarrow \vv{\varphi}$ in $H^{1}(\mathbb{R}; \mathbb{C}^{3})$ as $n$ goes to $+\infty$, and since $Q_{1}(\vv{\varphi})=\gamma$, $Q_{2}(\vv{\varphi})=\mu$, we see that
\begin{equation}\label{Sdg}
\lim_{n\rightarrow\infty} Q_{1}(\vv{u}_{0, n})=\gamma, \quad \lim_{n\rightarrow\infty} Q_{2}(\vv{u}_{0, n})=\mu \quad, \lim_{n\rightarrow\infty} E(\vv{u}_{0, n})=J(\gamma, \mu).
\end{equation}
Since $E$, $Q_{1}$ and $Q_{2}$ are independent of $t$, from \eqref{Sdg} we obtain
\begin{equation*}
\lim_{n\rightarrow\infty} Q_{1}(\vv{u}_{n}(t_{n}))=\gamma, \quad \lim_{n\rightarrow\infty} Q_{2}(\vv{u}_{n}(t_{n}))=\mu, \quad \lim_{n\rightarrow\infty} E(\vv{u}_{n}(t_{n}))=J(\gamma, \mu).
\end{equation*}
Thus,  we have that $\left\{\vv{u}_{n}(t_{n})\right\}$ is a minimizing sequence for $J(\gamma, \mu)$.  By part (i) of the Theorem \ref{NTE}, up to a subsequence, there exist a sequence $\left\{y_{n}\right\}\subset\mathbb{R}$ and a function  $\vv{\psi}\in \mathcal{M}_{\gamma,\mu}$ such that 
\begin{gather*}
\lim_{n\rightarrow\infty}\|\vv{u}_{n}(\cdot+y_{n}, t_{n})-\vv{\psi}\|_{H^{1}(\mathbb{R}; \mathbb{C}^{3})}=0.
\end{gather*}
Since $\vv{\psi}(\cdot-y_{n})\in \mathcal{M}_{\gamma,\mu}$, we see that for all sufficiently large $n$,
\begin{gather*}
\inf\left\{ \|\vv{u}_{n}(t_{n})-\vv{\varphi}\|_{H^{1}(\mathbb{R}; \mathbb{C}^{3})}:  \vv{\varphi}\in \mathcal{M}_{\gamma,\mu}      \right\}<{\epsilon}{},
\end{gather*}
which is a contradiction with \eqref{bnj}. This finishes the proof.
\end{proof}

\section*{Acknowledgements}
The author wishes to express his sincere thanks to the referees for their valuable
comments. He also gratefully acknowledges financial support from CNPq,
through grant No. 152672/2016-8.

\bibliographystyle{plain}
\bibliography{bibliografia}


\end{document}